\documentclass[11pt, a4paper, reqno]{amsart}
\usepackage{amsmath}
\usepackage{amsfonts}
\usepackage{amssymb}
\usepackage{color}
\usepackage{comment,graphicx}
\usepackage[T1]{fontenc}
\usepackage{url}
\usepackage{ae}

\usepackage{xcolor}
\usepackage{hyperref}
\hypersetup{
colorlinks=true,
linkcolor=blue,
citecolor=blue
}

\def\phi{\varphi}
\def\rho{\varrho}
\def\epsilon{\varepsilon}
\numberwithin{equation}{section}
\theoremstyle{plain}
\newtheorem{theorem}{Theorem}[section]
\newtheorem{lemma}[theorem]{Lemma}

\newtheorem{corollary}[theorem]{Corollary}
\newtheorem{definition}[theorem]{Definition}

\newtheorem{remark}[theorem]{Remark}

\renewcommand{\le}{\leqslant}
\renewcommand{\ge}{\geqslant}
\renewcommand{\leq}{\leqslant}
\renewcommand{\geq}{\geqslant}

\title[On Convolution in Variable Lebesgue Spaces and Fractional NSE]{On Convolution in Variable Lebesgue Spaces and Applications to Fractional Navier–-Stokes Equations}
\author[ S. BenMahmoud]{ Salah BenMahmoud$^{\ 1,2}$}
\date{February, 21. 2026 }

\subjclass[2000]{Primary 42B20, 46E30, 35R11; Secondary 35Q30, 35A01.}
\keywords{Fractional Navier-Stokes equations; variable exponent Lebesgue spaces; mixed-norm spaces; space-time estimates; convolution-type inequalities; well-posedness.\\
\\
  {\footnotesize Salah BenMahmoud} \\
 $\,^{1}${\tiny Department of Computer Science, Faculty of Science and Technology, University of Ahmed Zabana, Relizane 48000, Algeria.}\\
 $\,^{2}${\tiny Laboratory of Functional Analysis and Geometry of Spaces, University of Mohamed Boudiaf, M'sila 28000, Algeria.}\\
\vspace{1cm} {\tiny Emails : \textit{salahmath2016@gmail.com / salah.benmahmoud@univ-relizane.dz}}
}

\begin{document}
\maketitle

\begin{abstract}
In this paper, we introduce a new class of convolution-type inequalities in variable exponent Lebesgue spaces and derive several related estimates, including  the  \(L^{r(\cdot)}\)--\(L^{p(\cdot)}\) smoothing estimate for the fractional heat kernel. We demonstrate the usefulness of these inequalities by establishing the local well-posedness results for  mild solutions to  the fractional Navier--Stokes equations, and we further extend these results to global-in-time well-posedness for sufficiently small initial data. Our analysis is carried out in a wide range of mixed-norm variable exponent Lebesgue spaces, including the fully variable setting $L^{p(\cdot)}_t L^{q(\cdot)}_x$, highlighting the robustness of the proposed framework under non-constant integrability. Moreover, the proposed framework is expected to serve as a key tool for similar applications in other related variable exponent function spaces.
\newline

MSC classification: Primary 42B20, 46E30, 35R11; Secondary 35Q30, 35A01. \newline

Key words and phrases: Fractional Navier-Stokes equations; variable exponent Lebesgue spaces; mixed-norm spaces; space-time estimates; convolution-type inequalities; well-posedness.
\end{abstract}

\section{Introduction}
In this paper, we present new results related to convolution-type inequalities in
variable Lebesgue spaces and prove their relevance to existence results for the
fractional Navier--Stokes equations. The convolution operator takes two measurable
functions \(u\) and \(v\) defined on \(\mathbb{R}^n\) and produces the function defined by
\begin{equation*}
u*v(x)=\int_{\mathbb{R}^n}\,u(x-t)v(t)\,dt ,\, x\in \mathbb{R}^n.
\end{equation*}
It appears in different areas of mathematics, including the theory of
partial differential equations and integral equations. The boundedness of the
convolution operator on various function spaces has attracted significant attention,
since it is a key tool, in particular, in Lebesgue spaces with constant exponents,
the classical Young's inequality states that
 \[
\|u * v\|_{L^r(\mathbb{R}^n)} \le \|u\|_{L^p(\mathbb{R}^n)} \, \|v\|_{L^q(\mathbb{R}^n)},
\]
if $\frac{1}{p} + \frac{1}{q} - 1 = \frac{1}{r}$. This result has been a central tool of great importance in solving many problems in mathematical analysis. One of these problems is the system of fractional incompressible Navier--Stokes equations, which is defined in \(\mathbb{R}^3\) as follows,
\begin{equation}\label{NS_Intro}
\begin{cases}
\partial_t u=- (-\Delta)^{\alpha}  u-( u \cdot \nabla u)  u+ \nabla P + f, \qquad \\[3pt]
\operatorname{div}( u)=0,\\[3pt]
 u(0,x)= u_0(x),\quad \operatorname{div}( u_0)=0, \qquad x\in \mathbb{R}^3,
\end{cases}\tag{FNSE}
\end{equation}
with $\alpha\in (\frac{1}{2},1]$ and $(-\Delta)^{\alpha}$
is the fractional Laplacian operator, which can be defined by the Fourier transform,
$
(-\Delta)^{\alpha} u = \mathcal{F}^{-1}\!\left[\,|\xi|^{2\alpha}\, \mathcal{F}u(\xi, t)\right]
$, 
$ u:[0, +\infty)\times \mathbb{R}^3 \rightarrow \mathbb{R}^3$ the vector field denoting the velocity of a viscous, incompressible and homogeneous fluid,   $P:[0, +\infty)\times\mathbb{R}^3 \rightarrow  \mathbb{R}$ is its pressure and $ u_0:\mathbb{R}^3 \rightarrow \mathbb{R}^3$, $f:[0, +\infty)\times \mathbb{R}^3 \rightarrow \mathbb{R}^3$ are a given initial data and a given external force, respectively. In the present work, we focus on the fractional incompressible Navier–-Stokes equations in $\mathbb{R}^3$ for the dissipation range $\alpha\in (\frac{1}{2},1]$. While our primary analysis is situated in three dimensions and the results can largely be extended to $\mathbb{R}^n$ for $\alpha > 0$, certain estimates established in $\mathbb{R}$ do not naturally extend to $\mathbb{R}^2$ or $\mathbb{R}^3$. Consequently, we treat the one-dimensional case separately in specific instances. Note that  when $\alpha = 1$, the system  (\ref{NS_Intro}) reduces to the classical Navier–-Stokes equations. 

The convolution arises from the usual fractional heat kernel \(\mathcal{F}^{-1}\!\bigl[e^{-t|\xi|^{2\alpha}}\bigr]\), since the fundamental convolution identity reduces to $\mathcal{F}^{-1} [ e^{-t|\xi|^{2\alpha}} \mathcal{F} u(\xi,t) ] = ( \mathcal{F}^{-1} e^{-t|\xi|^{2\alpha}} ) * u(x) 
$. The Navier--Stokes equations involve evolution in both time \(t\) and space \(x\), hence
the natural framework for investigating the existence and regularity of solutions is the class of mixed-norm Lebesgue spaces, usually denoted by \(L^q_t L^p_x\),  \(L^q((0,+\infty); L^p(\Omega))\) or \(L^q(0,T; L^p(\Omega))\), \(T \in (0,+\infty]\). Similarly, the literature reflects an extensive use of broader mixed-norm frameworks, including Sobolev, Besov, and Triebel-Lizorken spaces. Furthermore, mixed-norm Lorentz spaces, denoted by \(L^{q,r}_t\, L^p_x\), have become indispensable for tracking the fine-grained regularity of solutions to fluid dynamics equations, see for example \cite{KozonoShimizu2018}.

A vast literature exists for the classical Navier--Stokes equations where convolution-type estimates and mixed-norm spaces play central roles, this line of research began with the pioneering work of Kato and Fujita 
 \cite{KatoFujita1962} in 1962, who introduced the semigroup approach to establish the existence of strong solutions. More recently, this framework has been extended in several notable works, including\cite{ChangJin2016} and  \cite{DD}  where Navier–-Stokes equations where studied in  $L^\infty(0, T; L^n(\mathbb{R}^n))$ and   $L^q_\alpha\,L^p$, respectively. In \cite{LiZhangZhang2021} the authors utilize a variety of mixed-norm spaces such $L^\infty((0, \infty), L^p(\mathbb{R}^3))$ and Lorentz-Mixed Spaces $L^q((0, \infty), L^{p, \infty}(\mathbb{R}^3))$. For classical results on the incompressible Navier--Stokes equations. For an exhaustive treatment and a comprehensive historical trajectory of the Navier–Stokes equations, we refer the reader to the  monographs of Lemarié-Rieusset \cite{LemarieRieusset2002, Lemarie-Rieusset2024}.

In recent years, authors have been interested in developing the tools and techniques to study various partial differential equations within the framework of variable Lebesgue spaces, denoted by \(L^{p(x)}\), and related function spaces such as variable Sobolev spaces \(W^{m,p(x)}\) (see \cite[Part II]{DHHR} and \cite[Chapter 6]{CF}), variable Besov spaces \(B_{p(\cdot),q(\cdot)}^{s(\cdot)}\), and variable Triebel--Lizorkin spaces \(F_{p(\cdot),q(\cdot)} ^{s(\cdot)}\) (see \cite{AH,DHR}). Variable Lebesgue spaces are considered a generalization of classical Lebesgue spaces. However, they present many challenges  and lack some properties -see \cite{DieningHastoNekvinda2005}- compared to the classical spaces such as the failure of translation invariance and other notable issue is the failure of Young's inequality. In other words, even if
$
\frac{1}{p(x)} + \frac{1}{q(x)} - 1 = \frac{1}{r(x)}
\quad \text{a.e. in } \mathbb{R}^n,
$
the estimate
\[
\|f * g\|_{L^{r(\cdot)}(\mathbb{R}^n)}
\le C \,
\|f\|_{L^{p(\cdot)}(\mathbb{R}^n)}
\|g\|_{L^{q(\cdot)}(\mathbb{R}^n)}
\]
does not hold. Nevertheless, under additional assumptions on the exponents \(p(\cdot)\), \(q(\cdot)\), and \(r(\cdot)\), restricted versions of  convolution-type estimates remain valid. This issue raises significant difficulties in studying partial differential equations in variable Lebesgue spaces and related function spaces, including the mixed-norm variable Lebesgue spaces, denoted by \(L^{q(\cdot)}_t L^{p(\cdot)}_x\). Despite these challenges, there exist many interesting results, some notable examples include
\cite{Chamorro2025,DudekSkrzypczak2017,Jiang2015,MihailescuRadulescu2012,Phan2019}.

In the papers \cite{GVH2} and \cite{GVH1}, local and global mild existence results were
established for (\ref{NS_Intro}) in the mixed spaces
\(L^{p(t)}\big(0,T; L^{q}(\mathbb{R}^{3})\big)\), where \(q\) is a constant exponent.
The restriction to constant \(q\) arises from the failure of Young's inequality in
\(L^{p(\cdot)}(\mathbb{R}^n)\) and the current lack of general convolution-type estimates in variable exponent spaces. These results later served as a basis in \cite{CVZ} to
establish local existence and regularity of solutions to the two-dimensional
dissipative surface quasi-geostrophic equation in variable Lebesgue spaces , see also the references therein for further related results. a further interesting development was presented in \cite{Sun2025}. In this work, the authors investigated the well-posedness of the generalized magnetohydrodynamic (MHD) equations, extending the framework to product spaces of the form $L^{p(\cdot)}(0, T; L^{p}(\mathbb{R}^{3})) \times L^{q(\cdot)}(0, T; L^{q}(\mathbb{R}^{3}))$. Furthermore, they established existence results in the intersection of mixed-norm spaces $L^{p(\cdot)}(\mathbb{R}^{3}; L^{\infty}(0, +\infty)) \cap L^{\frac{3}{2\alpha-1}}(\mathbb{R}^{3}; L^{\infty}(0, +\infty))$.

  Nevertheless, these results are generally confined to spaces of the form $L^{p(\cdot)}_t L^q_x$, where the spatial exponent $q$ remains constant, in the present work, we bridge this gap by establishing the first systematic treatment of the fully variable mixed-norm case $L^{p(\cdot)}_t L^{q(\cdot)}_x$. 

We aim to present techniques for working in mixed-norm variable Lebesgue spaces
\(L^{p(\cdot)}_t L^{q(\cdot)}_x\), an area that has been rapidly developing in recent years.
Despite their growing importance, the analysis of partial differential equations in
these function spaces still faces significant technical obstacles, including those
related to convolution estimates. Our goal is to address some of these difficulties
and provide tools that facilitate the study of PDEs within this framework. As an
application of the effectiveness of these techniques, we establish the existence of
mild solutions to the fractional Navier--Stokes  equations (\ref{NS_Intro}) in a variety of mixed-norm
variable Lebesgue spaces.  These methods and results  are highly adaptable and expected to yield similar advancements in the context of mixed-norm variable exponent scales including \(W^{m,p(x)}\), \(B_{p(\cdot),q(\cdot)}^{s(\cdot)}\) and  \(F_{p(\cdot),q(\cdot)} ^{s(\cdot)}\).

The main goals of this paper are twofold. First, we establish convolution-type
estimates presented in Lemmas \ref{LemmaCom 1}, \ref{LemmaCom 2}, \ref{Lemma of LV cap Lp} and
\ref{Lemma of L2 cap Lp}, as well as in Theorems \ref{THM 1}, \ref{THM1-2}, \ref{Prop2}. These results include estimates of the forms
$
\|\eta_{t,m} * u\|_{L^{p(x)}(\mathbb{R}^n)} \leq Ct^{-n\vartheta(t)} \|u\|_{L^{p(x)/2}(\mathbb{R}^n)}
$ and 
$
\|\eta_{t,m} * u\|_{L^{r(x)}(\mathbb{R}^n)} \leq Ct^{-n\omega(t)} \|u\|_{L^{p(x)}(\mathbb{R}^n)}
$
which relate the spaces  \(L^{p(x)}(\mathbb{R}^n),L^{r(x)}(\mathbb{R}^n) \) and \(L^{\frac{p(x)}{2}}(\mathbb{R}^n)\).
Such estimates constitute a key tool in the analysis of fractional Navier--Stokes equations
 and are particularly useful for handling nonlinear terms involving convolution operators.
  Some of our results cover the cases where \(p^{+} = \infty\), moreover, the exponents \(p\) 
  or \(q\) may take the value \(\infty\) on subsets of \(\mathbb{R}^n\), which introduces further
technical complications and additional analytical challenges.

The second goal of this work is to apply these convolution-type estimates 
to establish the fundamental smoothing estimates for the fractional heat semigroup in variable Lebesgue spaces presented in Theorems \ref{Prop - 1 - app}, \ref{Prop - 2 - app} and  \ref{Prop - 3 - app} and proving the existence of local and global mild solutions to the fractional Navier--Stokes
equations \eqref{NS_Intro}, as stated in Theorems 
\ref{THM1 - Local - Existence}--\ref{THMFinall - Local - Existence},  \ref{THM glob 1} and \ref{THM glob 2}. The proofs are presented in a clear and detailed manner, with each step carefully explained to ensure readability and accessibility.

The remainder of this paper is organized as follows. In Section~\ref{sec-pre}, we
introduce the notation and present definitions and basic results concerning variable
Lebesgue spaces, together with the necessary preliminaries. Section~\ref{sec-conv}
is devoted to convolution-type estimates in variable Lebesgue spaces, where we
establish results analogous to Young's inequality and several lemmas providing
convolution-type estimates that play a crucial role in proving existence theorems
for the fractional Navier--Stokes equations in the subsequent sections. Section~\ref{Sec-Exis} is divided into three subsections. We begin by recalling
preliminary results on the fractional Navier--Stokes equations, including the
usual fractional heat kernel
\(\mathcal{F}^{-1}\!\bigl[e^{-t|\xi|^{2\alpha}}\bigr]\) and the kernel associated
with the operator \(e^{-t(-\Delta)^{\alpha}} \mathbb{P}\operatorname{div}(\cdot)\),
 where  \( e^{-t(-\Delta)^{\alpha}} \) denotes the semigroup generated by the fractional Laplacian $(-\Delta)^{\alpha}$.  Then in Subsection~\ref{Est - HEAT semigroup} we derive Theorems~\ref{Prop - 1 - app}, \ref{Prop - 2 - app} and  \ref{Prop - 3 - app}, which states that, under certain conditions,  the norm of the heat semigroup $\| e^{-t(\Delta)^\alpha} u \|_{L^{p(\cdot)}(\mathbb{R}^n)}$ is bounded by
\[
 t^{-n\vartheta(t)}\, \|u\|_{L^{r(\cdot)}(\mathbb{R}^n)} \, \text{ or } \,
 t^{-n\vartheta(t)} \|u\|_{L^{r(\cdot)}(\mathbb{R}^n)\cap L^{\nu}(\mathbb{R}^n)},
 \, t>0,
\]
where $\vartheta(t)$ is a decay function determined by $\alpha,\nu$ and the variable exponents $p(\cdot)$ and $r(\cdot)$, which can be viewed as a generalization of the
classical \(L^r\)--\(L^p\) smoothing estimate for the heat semigroup. 
 In subsection \ref{Sec-Exis-local} we prove local existence theorems, this resuls are presented for sufficiently small $T\in(0,1]$ in  Theorems \ref{THM1 - Local - Existence}, \ref{THM1.1 - Local - Existence}, \ref{THM4 - Local - Existence} for the space $L^{p(\cdot)}\big(0,T;L^{q(\cdot)}(\mathbb{R}^{3})\big)$ , in Theorem  \ref{THM2 - Local - Existence} for the space $L^{p(\cdot)}\big(0,T;L^{q(\cdot)}(\mathbb{R}^{3})\cap L^\infty(\mathbb{R}^3)\big)$  and in Theorem \ref{THM3 - Local - Existence} for the space $L^{p(\cdot)}\big(0,T;L^{q(\cdot)}(\mathbb{R}^3)\cap L^\nu(\mathbb{R}^3)\big)$ and in Theorem~\ref{THMFinall - Local - Existence} for the space  $L^{p(\cdot)}\big(0,T;L^{q(\cdot)}(\mathbb{R})\cap L^2(\mathbb{R})\big)$. In the final Subsection~\ref{sec-exist-glob-exist}, we prove global existence
theorems for small initial data in the spaces
$
L^{p(\cdot)}((0,+\infty); L^{q(\cdot)}(\mathbb{R}^{3}) \cap L^\infty(\mathbb{R}^3))$, 
$
L^{p(\cdot)}((0,+\infty); L^{q}(\mathbb{R}^{3}))$
 and  $
L^{p}(0,T; L^{q(\cdot)}(\mathbb{R}^{3})),\ T >0$ stated in Theorems \ref{THM glob 1} and \ref {THM glob 2} respectively, 
we also establish corresponding global existence results on bounded intervals \((0,T)\), see Remark \ref{finalRamrk}.
\bigskip
\section{Preliminaries}\label{sec-pre}
Now, we present some notations. As usual, we denote by $\mathbb{R}^{n}$ the $n$-dimensional real Euclidean
space.   If $\Omega \subset {\mathbb{R}^{n}}$ is a measurable set, then $|\Omega|$ stands for
the Lebesgue measure of $\Omega$ and $\chi _{\Omega}$ denotes its characteristic
function. By $c$ or $C$ we denote generic positive constants, which may have
different values at different occurrences. Although the exact values of the
constants are usually irrelevant for our purposes, sometimes we emphasize
their dependence on certain parameters (e.g., $c(p), C(p)$ means that $c$ and $C$ depend
on $p$, etc.). We write  $f \leq Cg$($f \leq c\ g$) to signify that the inequality holds for some constant $C > 0$ independent of the (non-negative) functions $f$ and $g$. 

Let $X$ and $Y$ be normed spaces that are continuously embedded into a Hausdorff topological vector space $Z
$. We define their intersection $
X \cap Y := \{ f \in Z \mid f \in X \text{ and } f \in Y \},
$ and equip it with the norm $\|f\|_{X \cap Y} := \max\{ \|f\|_X, \|f\|_Y \}$. 
We define the Fourier transform of a
function $f\in L^1(\mathbb{R}^{n})$ by 
\begin{equation*}
\mathcal{F}(f)(\xi ):=(2\pi )^{-n/2}\int_{\mathbb{R}^{n}}e^{-ix\cdot \xi
}f(x)dx,\quad \xi \in \mathbb{R}^{n}.
\end{equation*}

 The variable exponents that we consider are always measurable functions $p$ on $\mathbb{R}^{n}$ with range
in $[1,\infty ]$, we denote the set of all such
functions by $\mathcal{P}(\mathbb{R}^{n})$. For $p\in \mathcal{P}(\mathbb{R}^{n})$
the conjugate exponent of $p$ denoted by ${p}^{\prime }$ 
 is  given by $\frac{1}{{p(\cdot )}}+\frac{1}{{p}%
^{\prime }{(\cdot )}}=1$ with the convention  $\frac{1}{\infty} = 0$.
 We use the standard notations: 
\begin{equation*}
p^{-}:=\underset{x\in \mathbb{R}^{n}}{\text{ess-inf}}\,p(x)\quad \text{and}%
\quad p^{+}:=\underset{x\in \mathbb{R}^{n}}{\text{ess-sup}}\,p(x).
\end{equation*}%
The function spaces in this paper are fit into the framework of  semi-modular spaces, see for example \cite[Chapter 2]{DHHR} and \cite[Section 2.10]{CF}. The function $\omega _{p}$ is defined as follows:
\begin{equation*}
\omega _{p}(t)=\left\{ 
\begin{array}{ccc}
t^{p} & \text{if} & p\in \lbrack 1,\infty )\text{ and }t>0, \\ 
0 & \text{if} & p=\infty \text{ and }0<t\leq 1, \\ 
\infty & \text{if} & p=\infty \text{ and } t>1.\qquad
\end{array}
\right.
\end{equation*}%

The convention $1^\infty=0$ is adopted in order that $\omega _{p}$ be left-continuous. The variable exponent modular is defined by 
\begin{equation*}
\varrho_{p(\cdot )}(f):=\int_{\mathbb{R}^{n}}\omega _{p(x)}(|f(x)|)\,dx.
\end{equation*}%
For simplicity, we shall write $\omega_{p(\cdot)}(|f(x)|)=|f(x)|^{p(x)}$, 
keeping in mind that this notation also covers the case \(p(x)=\infty\). The variable exponent Lebesgue space denoted $L^{p(\cdot )}(\mathbb{R}^{n})$ or $L^{p(x )}(\mathbb{R}^{n})$\ consists of measurable
functions $f$ on $\mathbb{R}^{n}$ such that $\varrho _{p(\cdot )}(\lambda
f)<\infty $ for some $\lambda >0$. We define the Luxemburg norm on
this space by the formula 
\begin{equation*}
\left\Vert f\right\Vert _{L^{p(\cdot )}(\mathbb{R}^{n})}:=\inf \Big\{\lambda >0:\varrho
_{p(\cdot )}\Big(\frac{f}{\lambda }\Big)\leq 1\Big\}.
\end{equation*}%
since the exponents considered in this paper are of range $[1,+\infty]$ then $f\to \varrho _{p(\cdot )}(f)$  is convex, see \cite[Section 3.1]{DHHR} and \cite[Proposition 2.7]{CF}, a consequence of the convexity of $\varrho_{p(\cdot )}$ is that if $\alpha > 1$, then $\alpha \varrho_{p(\cdot )}(f) \leq \varrho_{p(\cdot )}(\alpha f)$, and if $0 < \alpha < 1$, then $\varrho_{p(\cdot )}(\alpha f) \leq \alpha \varrho_{p(\cdot )}(f)$.
Another useful property, often referred to as the unit ball property, states that   $\left\Vert f\right\Vert _{L^{p(\cdot )}(\mathbb{R}^{n})}\leq 1$ if and only if $\varrho _{p(\cdot )}(f)\leq 1$, see \cite[Lemma 3.2.4]{DHHR}.
These properties will be invoked frequently throughout our analysis, often without explicit mention, as they are fundamental to the scaling of variable exponent spaces.

Let \(\Omega \subset \mathbb{R}^n\) and let \(f\) be measurable on \(\Omega\).
Then \(f \in L^{p(\cdot )}(\Omega)\) if and only if $\chi_{\Omega}\,f \in L^{p(\cdot )}(\mathbb{R}^n)$, 
$\varrho _{p(\cdot ),\Omega}(f):=\varrho _{p(\cdot )}(\chi_{\Omega}\,f)$  and 
$\left\Vert f\right\Vert _{L^{p(\cdot )}(\Omega)}:=\left\Vert \chi_{\Omega}\,f \right\Vert _{L^{p(\cdot )}(\mathbb{R}^{n})}$. Variable exponent Lebesgue spaces $L^{p(\cdot)}(\mathbb{R}^n)$ are Banach spaces that generalize the classical $L^p(\mathbb{R}^n)$ framework while retaining several fundamental structural properties. In particular, the variable exponent version of H\"{o}lder’s inequality holds, by \cite[Lemma 3.2.20]{DHHR}, if $ p,q,s\in \mathcal{P}(\mathbb{R}^{n})$ are such that $\frac{1}{s}=\frac{1}{p}+\frac{1}{q}$, then  for every $f\in L^{p(\cdot)}$ and $g\in L^{q(\cdot)}$
\begin{equation*}
\| f\, g\big\|_{L^{s(\cdot )}(\mathbb{R}^{n})}\le C \| f\big\|_{L^{p(\cdot )}(\mathbb{R}^{n})}\| g\big\|_{L^{q(\cdot )}(\mathbb{R}^{n})}. \label{hold-in}
\end{equation*} 
Now,  we state  a version of Minkowski's integral inequality for variable Lebesgue spaces, see \cite[Corollary 2.38]{CF}. 
If  $p(\cdot) \in \mathcal{P}(\Omega)$ and  $f: \mathbb{R}^n \times \mathbb{R}^n \to \mathbb{R}$ is a measurable function (with respect to the product measure) such that for almost every $y \in \Omega$, $f(\cdot, y) \in L^{p(\cdot)}(\mathbb{R}^n)$. Then,
\begin{equation*}\label{cor:minkowski}
\left\| \int_{\mathbb{R}^n} f(\cdot, y) \, dy \right\|_{L^{p(\cdot)}(\mathbb{R}^n)} \leq C \int_{\mathbb{R}^n} \| f(\cdot, y) \|_{L^{p(\cdot)}(\mathbb{R}^n)} \, dy,
\end{equation*}
where the positive constant  $C$ depends on the exponent $p(\cdot)$.

We say that a real valued-function $g$ on $\mathbb{R}^{n}$ is \textit{%
locally }log\textit{-H\"{o}lder continuous} on $\mathbb{R}^{n}$, abbreviated 
$g\in C_{\text{loc}}^{\log }(\mathbb{R}^{n})$, if there exists a constant $%
c_{\log }(g)>0$ such that 
\begin{equation}
\left\vert g(x)-g(y)\right\vert \leq \frac{c_{\log }(g)}{\log
(e+1/\left\vert x-y\right\vert )}  \label{lo-log-Holder}
\end{equation}%
for all $x,y\in \mathbb{R}^{n}$.

We say that $g$ satisfies the log\textit{-H\"{o}lder decay condition}, if
there exist two constants $g_{\infty }\in \mathbb{R}$ and $c_{\log }>0$ such
that%
\begin{equation*}
\left\vert g(x)-g_{\infty }\right\vert \leq \frac{c_{\log }}{\log
(e+\left\vert x\right\vert )}
\end{equation*}%
for all $x\in \mathbb{R}^{n}$. We say that $g$ is \textit{globally} log%
\textit{-H\"{o}lder continuous} on $\mathbb{R}^{n}$, abbreviated $g\in
C^{\log }(\mathbb{R}^{n})$, if it is\textit{\ }locally log-H\"{o}lder
continuous on $\mathbb{R}^{n}$ and satisfies the log-H\"{o}lder decay\textit{%
\ }condition.\textit{\ }The constants $c_{\log }(g)$ and $c_{\log }$ are
called the \textit{locally }log\textit{-H\"{o}lder constant } and the log%
\textit{-H\"{o}lder decay constant}, respectively\textit{.} We note that any
function $g\in C_{\text{loc}}^{\log }(\mathbb{R}^{n})$ always belongs to $%
L^{\infty }(\mathbb{R}^{n})$.

We define the following class of variable exponents: 
\begin{equation*}
\mathcal{P}^{\mathrm{log}}(\mathbb{R}^{n}):=\Big\{p\in \mathcal{P}(
\mathbb{R}^{n}):\frac{1}{p}\in C^{\log }(\mathbb{R}^{n})\Big\},
\end{equation*}%
 which is the standard class of exponents for which the Hardy--Littlewood maximal operator is bounded, see~\cite[Chapter 4]{DHHR}. We define 
\begin{equation*}
\frac{1}{p_{\infty }}:=\lim_{|x|\rightarrow \infty }\frac{1}{p(x)},
\end{equation*}%
and we use the convention $\frac{1}{\infty }=0$. Note that although $\frac{1%
}{p}$ is bounded, the variable exponent $p$ itself can be unbounded. We define the least bell-shaped majorant 
\(\Psi\) of  a function $\varphi \in L^{1}(\mathbb{R}^{n})$ by
\begin{equation*}
\Psi \left( x\right) :=\sup_{\left\vert y\right\vert \geq \left\vert
x\right\vert }\left\vert \varphi \left( y\right) \right\vert \, , \,  x \in \mathbb{R}^{n}\, .
\end{equation*}%

We suppose that $\Psi \in L^{1}(\mathbb{R}^{n})$, then it is
proved in $\text{\cite[Lemma \ 4.6.3]{DHHR}}$  the following
\begin{lemma}\label{Moll-Thm}
Let  \(p \in \mathcal{P}^{\log}(\mathbb{R}^n)\) and let 
\(\psi \in L^1(\mathbb{R}^n)\). Assume that the least bell-shaped majorant 
\(\Psi\) of \(\psi\) is integrable. Then
\[
\| f * \psi_\varepsilon \|_{L^{p(\cdot)}(\mathbb{R}^n)} 
\;\le\; c \, \|\Psi\|_{1} \, \| f \|_{L^{p(\cdot)}(\mathbb{R}^n)}
\]
for all \(f \in L^{p(\cdot)}(\mathbb{R}^n)\), where $\psi_\varepsilon(x):=\varepsilon^{-n}\psi(\varepsilon^{-1}x),\, x\in \mathbb{R}^n$ and $c$ is a constant of   depends only on \(p\) and  \(n\).
\end{lemma}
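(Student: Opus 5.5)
The strategy is the classical one from \cite[Section 4.6]{DHHR}: bound $f*\psi_\varepsilon$ pointwise by averages of $f$ over balls, and then use the log-H\"older conditions to make those averages behave well in the variable-exponent modular.

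\textbf{Reductions.} By homogeneity of the norm we may assume $\|f\|_{L^{p(\cdot)}}\le 1$. Replacing $\psi$ by $\Psi/\|\Psi\|_1$ and $f$ by $|f|$, we may also assume $\|\Psi\|_1=1$ and $f\ge 0$. The unit ball property then reduces the claim to showing
\[
\varrho_{p(\cdot)}\big(c^{-1}\,f*\Psi_\varepsilon\big)\le 1 .
\]

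\textbf{Layer-cake decomposition.} Since $\Psi$ is radial and nonincreasing, we write it as a superposition of normalized indicators of balls:
\[
\Psi(x)=\int_0^\infty \chi_{B(0,r)}(x)\,d\mu(r)
\quad\text{with}\quad
\int_0^\infty |B(0,r)|\,d\mu(r)=\|\Psi\|_1=1 .
\]
Consequently $f*\Psi_\varepsilon(x)$ is an average, with respect to a probability measure in $r$, of the ball averages $M_{B(x,\varepsilon r)}f$. By Jensen's inequality (convexity of $\omega_{p(x)}$), it suffices to obtain a bound that is uniform over all balls.

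\textbf{Key estimate.} The main step is the local-to-global key estimate \cite[Theorem 4.2.4]{DHHR}. For $p\in\mathcal{P}^{\log}$, every ball $B$, every $x\in B$ and every $f$ with $\varrho_{p(\cdot)}(f)\le 1$, it gives
\[
\big(\beta M_Bf\big)^{p(x)}\le \mathop{\mathrm{avg}}_B\!\big(f^{p(\cdot)}\big)+\mathop{\mathrm{avg}}_B h(x,\cdot),
\]
where $h(x,y)=(e+|x|)^{-m}+(e+|y|)^{-m}$ with $m>n$ is integrable. Integrating this in $\mu(r)$ and then in $x$, and using Fubini, yields
\[
\varrho_{p(\cdot)}\big(\beta f*\Psi_\varepsilon\big)\le C\,\varrho_{p(\cdot)}(f)+C\le C',
\]
since $\int \Psi_\varepsilon(x-y)\,dx=1$. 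Convexity of the modular then lets us absorb the constant $C'$ into $c$. The case where $p(x)=\infty$ on some sets is handled by the same key estimate, which in \cite{DHHR} also covers $p^+=\infty$.

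\textbf{Main obstacle.} The hard part is the key estimate itself. It requires splitting $f$ into its large and small parts, using local log-H\"older continuity to get $|B|^{-(p_B^- - p(x))}\le C$ when $|B|\le 1$, and using the decay condition on large balls to compare against $p_\infty$. Since this lemma is quoted from \cite[Lemma 4.6.3]{DHHR}, one could also simply cite that result.
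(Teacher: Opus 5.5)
Your proposal is correct. It is essentially the standard argument behind \cite[Lemma 4.6.3]{DHHR}, which the paper quotes without giving its own proof: you write $\Psi_\varepsilon$ as a probability mixture of normalized ball indicators, apply Jensen and the key estimate \cite[Theorem 4.2.4]{DHHR} at the centre of each ball, then integrate with Fubini and rescale using convexity of the modular, and citing that lemma directly, as you suggest at the end, is exactly what the paper does.
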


Define for  $x\in \mathbb{R}^{n}$, $t>0$ and $m>0$ the function 
\begin{equation}\label{Not-eta}
\eta _{t,m}(x):=t^{-n}(1+t^{-1}\left\vert x\right\vert )^{-m}.
\end{equation}%
 Note that $\eta _{t,m}\in L^{1}(\mathbb{R}^{n})$ when $m>n$ and that $\big\|\eta _{t,m}\big\|_{1}=c(m)$ is independent
of $t$. Also, for all $t>0$ and $m>n$ the least bell-shaped majorant of
$\eta _{t,m}$ is  $\eta _{t,m}$ itself , then by Lemma~\ref{Moll-Thm} if $p\in \mathcal{P}^{\text{log}}(\mathbb{R}^{n})$
\begin{equation} 
\Vert \eta _{t,m} \ast f\Vert _{L^{p(\cdot )}(\mathbb{R}^{n})}\leq c \Vert f\Vert _{L^{p(\cdot )}(\mathbb{R}^{n})} \label{eneq p norm}
\end{equation}
 are independent of $t>0$.  We refer to the recent monographs \cite{CF,DHHR} for further
properties, historical remarks and references on variable exponent Lebesgue spaces.

We say that the variable Lebesgue space $L^{p(\cdot)}(\mathbb{R}^n)$ is continuously embedded into $L^{q(\cdot)}(\mathbb{R}^n)$, and write
$
L^{p(\cdot)}(\mathbb{R}^n) \hookrightarrow L^{q(\cdot)}(\mathbb{R}^n),
$
if
$
L^{p(\cdot)}(\mathbb{R}^n) \subset L^{q(\cdot)}(\mathbb{R}^n)
$
and there exists a constant $C>0$ such that
\[
\|f\|_{L^{q(\cdot)}(\mathbb{R}^n)} \le C \|f\|_{L^{p(\cdot)}(\mathbb{R}^n)}
\]
for all  f $\in L^{p(\cdot)}(\mathbb{R}^n)$. 
Now, recall some results on embeddings, the following two  lemmas are \cite[Theorem 3.3.1]{DHHR} and \cite[Proposition 4.1.8]{DHHR}, respectively.
\begin{lemma}\label{lem1}
Let \(p, q \in \mathcal{P}(\mathbb{R}^n)\) where $p\leq q$ almost everywhere. Define the exponent \(s \in \mathcal{P}(\mathbb{R}^n)\) by
\[
\frac{1}{s(x)} := \max \left\{\, \frac{1}{p(x)} -\frac{1}{q(x)} ,\, 0 \, \right\}, \qquad \forall x \in A.
\]
If  \(1 \in L^{s(\cdot)}(\mathbb{R}^n)\), then
\[
L^{q(\cdot)}(\mathbb{R}^n) \hookrightarrow L^{p(\cdot)}(\mathbb{R}^n)
\]
with embedding norm at most \(2 \, \|1\|_{L^{s(\cdot)}(\mathbb{R}^n)}\).
\end{lemma}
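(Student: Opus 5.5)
The plan is to combine Hölder's inequality with the unit-ball property of the modular. Fix $f\in L^{q(\cdot)}(\mathbb{R}^n)$. Since $p\le q$ a.e., we may write
\[
\frac{1}{p(x)}=\frac{1}{q(x)}+\frac{1}{s(x)}
\]
a.e., because the maximum in the definition of $s$ is attained by the first term. The one caveat is the points where $q(x)=\infty$; there $1/s=1/p$, and the convention $1/\infty=0$ keeps the identity valid. Then $f=f\cdot 1$, and the variable Hölder inequality gives $\|f\|_{L^{p(\cdot)}}\le C\|f\|_{L^{q(\cdot)}}\|1\|_{L^{s(\cdot)}}$. This already yields the embedding, but the constant $C$ from \cite[Lemma 3.2.20]{DHHR} has to be checked to be at most $2$.

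To get the sharp constant I would instead work directly with the modular. First normalize $\|f\|_{L^{q(\cdot)}}\le 1$ and $\|1\|_{L^{s(\cdot)}}\le 1$; by homogeneity, applied to $f/\|f\|_{q}$ and to the scaling $\lambda=\|1\|_{s}$, it suffices to show $\varrho_{p(\cdot)}(f/2)\le 1$ in this case. At points with $p(x)<\infty$, Young's pointwise inequality with exponents $q/p$ and $s/p$ (whose reciprocals sum to $1$) gives
\[
|f(x)\cdot 1|^{p(x)}\le \frac{p(x)}{q(x)}|f(x)|^{q(x)}+\frac{p(x)}{s(x)}\,1^{s(x)}\le |f(x)|^{q(x)}+\lambda\text{-scaled term}.
\]
Integrating then gives $\varrho_{p}(f\cdot\lambda^{-1})\le \varrho_q(f)+\varrho_s(1/\lambda)\le 2$. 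Convexity gives $\varrho_p(\tfrac12 f/\lambda)\le 1$, so $\|f\|_{p}\le 2\lambda\|f\|_q$ after undoing the normalization.

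The main obstacle is the bookkeeping for the infinite cases. Where $p(x)=\infty$ we also have $q(x)=\infty$, and then $\omega_\infty(|f|/2)=0$ because $|f|\le 1$ a.e. there, which follows from $\varrho_q(f)\le1$. Where $q=\infty$ but $p<\infty$, we have $s=p$, and the bound $|f|\le 1$ reduces the estimate to the $1/\lambda$ term. I would treat these three sets separately and check that the left-continuity convention $1^\infty=0$ makes each pointwise bound valid.
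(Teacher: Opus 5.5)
Your proposal is correct and follows the standard route. The paper gives no proof of this lemma; it quotes \cite[Theorem 3.3.1]{DHHR}, where one writes $f=f\cdot 1$ with $\frac{1}{p}=\frac{1}{q}+\frac{1}{s}$ a.e. and applies H\"{o}lder's inequality \cite[Lemma 3.2.20]{DHHR}, whose constant is $2$. So your first paragraph already suffices, and your modular computation with $1/\lambda$, $\lambda=\|1\|_{L^{s(\cdot)}(\mathbb{R}^n)}$, just reproves that H\"{o}lder constant in this special case.

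One gap in your case analysis: you never treat the set $\{p=q<\infty\}$, where $s=\infty$ and Young's inequality with exponent $s/p$ degenerates. There, and likewise on $\{p=\infty\}$ (where also $s=\infty$), the bound $\omega_{p(x)}(|f(x)|/\lambda)\le\omega_{q(x)}(|f(x)|)+\omega_{s(x)}(1/\lambda)$ still holds. The reason is that $\varrho_{s(\cdot)}(1/\lambda)\le 1$ forces $\lambda\ge 1$ whenever $\{s=\infty\}$ has positive measure. This same fact gives the $\lambda\ge1$ you implicitly need when you write $\omega_\infty(|f|/2)$ on $\{p=\infty\}$; the correct quantity there is $\omega_\infty(|f|/(2\lambda))$.
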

Note that, in a bounded domain $\Omega \subset \mathbb{R}^n$ with $|\Omega| < \infty$, then by \cite[Lemma 3.2.12]{DHHR},
\begin{equation}\label{1 in bound in P}
\|1\|_{L^{s(\cdot)}(\Omega)} \leq C \max\{ |\Omega|^{\frac{1}{s^-}} , |\Omega|^{\frac{1}{s^+}} \},
\end{equation}
and by \cite[Corollary 3.3.4.]{DHHR} the variable Lebesgue spaces satisfy the continuous embedding $L^{q(\cdot)}(\Omega) \hookrightarrow L^{p(\cdot)}(\Omega)$ provided that $p(x) \leq q(x)$ for almost every $x \in \Omega$.

Unlike in the case of classical Lebesgue spaces, this embedding is possible even though  $|\mathbb{R}^n| = \infty$. 
The condition  \(1 \in L^{s(\cdot)}(\mathbb{R}^n)\) where \(s \in \mathcal{P}(\mathbb{R}^n)\) is of great importance in establishing many embedding results and plays an important role in convolution estimates in variable Lebesgue spaces. As an example, if $s(x)=1+\sum_{i=1}^n|x_i|, \,x \in \mathbb{R}^n$ then for $\lambda > 1 $ we have 
$$\varrho _{s(\cdot )}\left(\frac{1}{\lambda} \right)= \int_{\mathbb{R}^n}\,\left(\frac{1}{\lambda}\right)^{s(x)} \,dx=
\int_{\mathbb{R}^n}\,\lambda^{-\left(1+\sum_{i=1}^n|x_i|\right)} \,dx=\frac{2^n}{\lambda\log^n(\lambda)} ,
$$
and $\varrho _{s(\cdot )}\left(\frac{1}{\lambda} \right)=\infty$ if $0 <\lambda \leq 1$.  Thus \(1 \in L^{s(\cdot)}(\mathbb{R}^n)\) and 
\begin{equation*}
\left\Vert 1\right\Vert _{L^{s(\cdot)}(\mathbb{R}^n)}=\inf \Big\{\lambda >1:2^n \leq \lambda\log^n(\lambda)\Big\} \leq e^2 .
\end{equation*}%

\begin{lemma} \label{lem2}
Let \(p, q \in \mathcal{P}^{\log}(\mathbb{R}^n)\) with \(p_\infty = q_\infty\).  
If \(s \in \mathcal{P}(\mathbb{R}^n)\) is given by
\[
\frac{1}{s} := \left| \frac{1}{p} - \frac{1}{q} \right|,
\]
then \(1 \in L^{s(\cdot)}(\mathbb{R}^n)\), and
\begin{equation*}
L^{\max\{p(\cdot),q(\cdot)\}}(\mathbb{R}^n) 
\;\hookrightarrow\; L^{q(\cdot)}(\mathbb{R}^n) 
\;\hookrightarrow\; L^{\min\{p(\cdot),q(\cdot)\}}(\mathbb{R}^n).
\end{equation*}
\end{lemma}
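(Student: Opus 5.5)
The plan is to deduce both embeddings from Lemma~\ref{lem1}, after first showing that $1\in L^{s(\cdot)}(\mathbb{R}^n)$. To get $1\in L^{s(\cdot)}(\mathbb{R}^n)$, I would find $\lambda>1$ with
\[
\varrho_{s(\cdot)}(1/\lambda)=\int_{\mathbb{R}^n}\lambda^{-s(x)}\,dx<\infty .
\]
Since $p_\infty=q_\infty$, the log-H\"older decay condition gives
\[
\frac{1}{s(x)}=\Big|\frac{1}{p(x)}-\frac{1}{q(x)}\Big|\le \Big|\frac{1}{p(x)}-\frac{1}{p_\infty}\Big|+\Big|\frac{1}{q(x)}-\frac{1}{q_\infty}\Big|\le \frac{c}{\log(e+|x|)},
\]
with $c=c_{\log}(1/p)+c_{\log}(1/q)$. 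Hence $s(x)\ge c^{-1}\log(e+|x|)$, and
\[
\lambda^{-s(x)}\le \exp\!\big(-c^{-1}\log\lambda\,\log(e+|x|)\big)=(e+|x|)^{-\log\lambda/c}.
\]
Choosing $\lambda$ with $\log\lambda/c>n$ makes the right-hand side integrable over $\mathbb{R}^n$. On the set where $s=\infty$ we have $\lambda^{-s}=0$ with the convention $\omega_\infty(1/\lambda)=0$ for $\lambda\ge1$, so that set contributes nothing. This step is the crux, and it is essentially the argument behind the classical weight $(e+|x|)^{-m}$.

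For the embeddings, I would compare each pair of exponents with Lemma~\ref{lem1} and check that the exponent produced there coincides with $s$ on the relevant set.
\begin{itemize}
\item For $L^{\max\{p,q\}}\hookrightarrow L^{q}$, apply Lemma~\ref{lem1} with $q\le \max\{p,q\}$. The associated exponent $\tilde s$ satisfies $1/\tilde s=\max\{1/q-1/\max\{p,q\},0\}$.
\item This quantity equals $|1/p-1/q|=1/s$ where $p>q$, and equals $0$ where $p\le q$. So $\tilde s\ge s$ pointwise.
\item For $L^{q}\hookrightarrow L^{\min\{p,q\}}$, the same computation gives $1/\tilde s=\max\{1/\min\{p,q\}-1/q,0\}\le 1/s$.
\end{itemize}

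In both cases it remains to pass from $1\in L^{s(\cdot)}$ to $1\in L^{\tilde s(\cdot)}$ when $\tilde s\ge s$. Since $1/\lambda<1$ for $\lambda>1$, the map $r\mapsto \lambda^{-r}$ is decreasing in $r$, including the value $r=\infty$. Thus $\varrho_{\tilde s}(1/\lambda)\le \varrho_s(1/\lambda)<\infty$, which gives $1\in L^{\tilde s(\cdot)}$. Lemma~\ref{lem1} then yields both continuous embeddings, with constants at most $2\|1\|_{L^{\tilde s(\cdot)}}$, and these are finite.

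I do not expect an obstacle beyond the decay estimate. The only care needed is with points where $p$ or $q$ is infinite; there the convention $1/\infty=0$ keeps every step valid.
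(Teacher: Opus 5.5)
Your argument is correct and is essentially the standard proof behind \cite[Proposition 4.1.8]{DHHR}, which the paper cites without reproducing. The decay condition with $p_\infty=q_\infty$ gives $1/s(x)\le c/\log(e+|x|)$, hence $\int_{\mathbb{R}^n}\lambda^{-s(x)}\,dx<\infty$ for large $\lambda$; both embeddings then follow from Lemma~\ref{lem1} via the pointwise comparison $\tilde s\ge s$ and the monotonicity of $r\mapsto\lambda^{-r}$. One small correction: only the decay part of the $\log$-H\"older hypothesis is used, so your constants $c_{\log}(1/p)$ and $c_{\log}(1/q)$ should be the decay constants, not the local ones.
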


In particular if  \(p, q \in \mathcal{P}^{\log}(\mathbb{R}^n)\) with $ p\leq q $ and  $p_\infty = q_\infty$, then by      
Lemma~\ref{lem2} we have  \(1 \in L^{s(\cdot)}(\mathbb{R}^n)\) and 
\begin{equation*}
L^{q(\cdot)}(\mathbb{R}^n) \hookrightarrow L^{p(\cdot)}(\mathbb{R}^n).
\end{equation*}

If $p\leq p_\infty $ that is $p^+=p_\infty$, by taking $q=p_\infty$ then, again by Lemma~\ref{lem2}
\begin{equation}\label{Embed-Eq}
L^{p_\infty}(\mathbb{R}^n) \;\hookrightarrow\; L^{p(\cdot)}(\mathbb{R}^n).
\end{equation} 
and \(1 \in L^{s(\cdot)}(\mathbb{R}^n)\) where $\frac{1}{s} :=  \frac{1}{p} - \frac{1}{p_\infty}$.

\begin{definition}\label{def:RieszPotential}
Let $0 < \beta < n$. For a measurable function $f$, the \textit{Riesz potential operator} $\mathcal{R}_{\beta}(f) : \mathbb{R}^n \to [0, +\infty]$ is defined by
\begin{equation*} \label{eq:RieszPotential}
\mathcal{R}_{\beta}(f)(x) := \int_{\mathbb{R}^n} \frac{|f(y)|}{|x - y|^{n-\beta}} \, dy.
\end{equation*}
\end{definition}
The Riesz potential operator is an essential tool in the study of Partial Differential Equations (PDEs). Its boundedness between Lebesgue space  known as the Hardy-Littlewood-Sobolev inequality has been widely applied to the Navier–Stokes equations to establish existence and regularity results.
 
The following theorem establishes the boundedness of the Riesz potential operator within the framework of variable Lebesgue spaces, providing the necessary conditions on the exponent $p(\cdot)$ to ensure its continuity, 
a proof of this theorem can be consulted in \cite[Section 6.1]{DHHR}. 
\begin{theorem} \label{Thm:RieszBoundedness}
Let $p(\cdot) \in \mathcal{P}_{\log}(\mathbb{R}^n)$ and $0 < \beta < n/p^+$. Then, there exists a constant $C > 0$ such that
\begin{equation*} \label{eq:RieszEstimate}
\| \mathcal{R}_{\beta}(f) \|_{L^{q(\cdot)}} \le C \| f \|_{L^{p(\cdot)}}, 
\end{equation*}
where the exponent $q(\cdot)$ satisfies the relation
\begin{equation*}
\frac{1}{q(\cdot)} = \frac{1}{p(\cdot)} - \frac{\beta}{n}.
\end{equation*}
\end{theorem}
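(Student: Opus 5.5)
The plan is to adapt Hedberg's pointwise method to the variable exponent setting. By homogeneity of the Luxemburg norm, it suffices to treat $f\ge 0$ with $\|f\|_{L^{p(\cdot)}}\le 1$. By the unit ball property, the goal is then to show $\varrho_{q(\cdot)}(\mathcal{R}_\beta f/C)\le 1$ for a constant $C$ depending only on $n,\beta$ and the log-Hölder constants of $p$.

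\textbf{Near/far splitting.} Fix $x$ and a radius $\delta>0$, to be chosen depending on $x$, and split
\[
\mathcal{R}_\beta f(x)=\int_{|x-y|<\delta}\frac{f(y)}{|x-y|^{n-\beta}}\,dy+\int_{|x-y|\ge\delta}\frac{f(y)}{|x-y|^{n-\beta}}\,dy .
\]
\emph{Near part.} Decompose into dyadic annuli $2^{-k-1}\delta\le|x-y|<2^{-k}\delta$. This gives the bound $C\delta^\beta Mf(x)$, where $M$ is the Hardy--Littlewood maximal operator.

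\emph{Far part.} In the constant exponent case one applies Hölder with $p$. Here I would follow the variable exponent approach of \cite[Section 6.1]{DHHR}:
\begin{itemize}
\item Dyadically decompose $\{|x-y|\ge\delta\}$ into balls $B(x,2^k\delta)$.
\item Use the key averaging estimate for $\mathcal{P}^{\log}$ exponents: for $\|f\|_{p(\cdot)}\le1$,
\[
\Big(\frac{1}{|B|}\int_B f\Big)^{p(x)}\le C\,\frac{1}{|B|}\int_B f(y)^{p(y)}\,dy+C\,h(x,B),
\]
where $h$ is an integrable error of the form $(e+|x|)^{-m}$, coming from log-decay at infinity.
\item Together with $|B|\,\|f\|\le$ a modular bound, this gives $\int_{|x-y|\ge\delta}\ldots\le C\,\delta^{\beta-n/p(x)}$ plus a harmless correction term.
\end{itemize}
The condition $\beta<n/p^+$ makes the geometric series in $k$ converge uniformly in $x$.

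\textbf{Hedberg optimisation.} Choose $\delta=\delta(x)$ with $\delta^{\beta}Mf(x)=\delta^{\beta-n/p(x)}$, i.e. $\delta=Mf(x)^{-p(x)/n}$. This yields
\[
\mathcal{R}_\beta f(x)\le C\,Mf(x)^{1-\beta p(x)/n}=C\,Mf(x)^{p(x)/q(x)}
\]
up to the error term. Raising to the power $q(x)$ gives
\[
\varrho_{q(\cdot)}(\mathcal{R}_\beta f/C)\lesssim \int (Mf)^{p(x)}\,dx+\int h\,dx .
\]
The first integral is bounded by the boundedness of $M$ on $L^{p(\cdot)}$ for $p\in\mathcal{P}^{\log}$ with $p^->1$. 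That assumption, implicit in the statement, is needed for the strong-type estimate, as in the constant case $p=1$. The second integral is a fixed constant. Convexity of the modular then rescales the constant so that the modular is at most $1$.

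\textbf{Main obstacle.} I expect the far-part estimate to be the hardest step. There the exponent varies between $x$ and $y$, so one must compare $|B|^{-1/p(y)}$ with $|B|^{-1/p(x)}$ for large balls. Local log-Hölder continuity handles small balls, but for large balls one must use the decay condition and the limit $p_\infty$. The cleanest route is to first prove the averaging inequality (a version of \cite[Lemma 4.2.4]{DHHR}) and then sum over dyadic scales carefully.
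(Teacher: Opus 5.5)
The paper does not prove this theorem; it only refers to \cite[Section 6.1]{DHHR}. Your Hedberg-type scheme (near part $\le C\delta^\beta Mf(x)$, far part, choice of $\delta$, boundedness of $M$) is the right kind of argument. You are also right that $p^->1$ must be added, since the statement fails for $p\equiv 1$.

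The gap is in the far-part step. You flag it as the main obstacle, but the mechanism you propose for it cannot work. In your averaging inequality the error does not decay with the radius: $h(x,B)\ge (e+|x|)^{-m}$ for every ball $B\ni x$. Inserted into the dyadic decomposition, it produces a correction of at least
\[
\sum_{k\ge 0}(2^k\delta)^{\beta}\,(e+|x|)^{-m/p(x)}=\infty ,
\]
which is not ``harmless''. The $p(x)$-scaling also genuinely fails at large scales. Suppose $p(x)<p_\infty$ and $f=cR^{-n/p_\infty}\chi_{B(z,R)}$ with $|x-z|=2R$. Then $\|f\|_{L^{p(\cdot)}}\le 1$ for a suitable $c$ independent of $R$. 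Yet the far part at $x$ with $\delta=R$ is comparable to $R^{\beta-n/p_\infty}$, which is much larger than $R^{\beta-n/p(x)}$ as $R\to\infty$. Large balls are governed by $p_\infty$, not by $p(x)$.

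The missing step is a separate treatment of radii $\ge 1$. Bound each dyadic piece with the variable H\"older inequality, $\int_{B(x,r)}f\le 2\|\chi_{B(x,r)}\|_{L^{p'(\cdot)}}$. Then use the $\mathcal{P}^{\log}$ asymptotics
\[
\|\chi_{B(x,r)}\|_{L^{p'(\cdot)}}\approx r^{n/p'(x)} \ (r\le 1), \qquad \|\chi_{B(x,r)}\|_{L^{p'(\cdot)}}\approx r^{n/p'_\infty} \ (r\ge 1).
\]
Equivalently, on balls with $|B|\ge 1$ compare with $p_\infty$ directly: $(M_Bf)^{p_\infty}\lesssim M_B(f^{p(\cdot)})+M_B((e+|\cdot|)^{-m})\lesssim |B|^{-1}$, and this error \emph{is} summable. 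Either way you obtain
\[
\int_{|x-y|\ge\delta}\frac{f(y)}{|x-y|^{n-\beta}}\,dy\le
\begin{cases} C\,\delta^{\beta-n/p(x)}, & 0<\delta\le 1,\\ C\,\delta^{\beta-n/p_\infty}, & \delta>1.\end{cases}
\]
Both geometric series converge because $\beta<n/p^+\le\min\{n/p(x),n/p_\infty\}$.

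Only at this point does the decay condition enter. Let $c_{\log}$ be the decay constant of $1/p$.
\begin{enumerate}
\item For $1<\delta\le (e+|x|)^m$ one has $\delta^{n/p(x)-n/p_\infty}\le e^{nmc_{\log}}$.
\item For $\delta>(e+|x|)^m$ one has $\delta^{\beta-n/p_\infty}\le (e+|x|)^{-m'}$ with $m'=m(n/p_\infty-\beta)$.
\end{enumerate}
Hence the far part is at most $C\delta^{\beta-n/p(x)}+C(e+|x|)^{-m'}$ for every $\delta>0$. That is exactly the bound you asserted, but it is reached by this route, not by summing your averaging inequality.

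With this bound, your choice $\delta=Mf(x)^{-p(x)/n}$ gives
\[
\mathcal{R}_\beta f(x)^{q(x)}\lesssim Mf(x)^{p(x)}+(e+|x|)^{-m'q^-}.
\]
Choose $m$ so that $m'q^->n$. The rest of your argument, namely boundedness of $M$ for $p^->1$ and the convexity rescaling, then goes through.
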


\begin{theorem}[Banach--Picard principle] \label{Thm:BanachPicard}
Consider a Banach space $(E,\|\cdot\|_E)$ and a bounded bilinear application  $B : E \times E \to E$ 
such that
\[
\|B(u,u)\|_E \le C_B \|u|_E \|u\|_E .
\]
Given $e_0 \in E$ such that $\|e_0\|_E \le \delta$ with   $0 < \delta < \frac{1}{4C_B},$ 
the equation
$
u = e_0 - B(u,u)
$
admits a unique solution $u \in E$ which satisfies 
$
\|u\|_E \le 2\delta.
$
\end{theorem}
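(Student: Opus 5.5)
The statement is the Banach contraction (Picard) principle for a quadratic equation. The plan is to build a solution by Picard iteration on the closed ball $\overline{B}(0,2\delta)\subset E$, and to use the bilinearity of $B$ to show that the map
\[
\Phi(u):=e_0-B(u,u)
\]
is a contraction of that ball into itself. Because $B$ is bounded and bilinear, I will read the hypothesis as the general bound $\|B(u,v)\|_E\le C_B\|u\|_E\|v\|_E$ for all $u,v\in E$. This is the usual meaning; a bound only on the diagonal would not control cross terms, so I take it as intended.

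\emph{Step 1: $\Phi$ maps the ball into itself.} For $\|u\|_E\le 2\delta$,
\[
\|\Phi(u)\|_E\le \delta+C_B(2\delta)^2=\delta(1+4C_B\delta).
\]
Since $4C_B\delta<1$, this is less than $2\delta$.

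\emph{Step 2: contraction.} Bilinearity gives the identity
\[
B(u,u)-B(v,v)=B(u-v,u)+B(v,u-v).
\]
Hence, for $u,v$ in the ball,
\[
\|\Phi(u)-\Phi(v)\|_E\le C_B(\|u\|_E+\|v\|_E)\|u-v\|_E\le 4C_B\delta\,\|u-v\|_E .
\]
The factor $4C_B\delta$ is strictly less than $1$ by hypothesis. The closed ball is a complete metric space because $E$ is Banach, so the contraction mapping theorem gives a unique fixed point $u$ in the ball. Equivalently, the iterates $u_0=e_0$, $u_{k+1}=\Phi(u_k)$ form a Cauchy sequence in the ball and converge to this fixed point. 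The fixed point solves $u=e_0-B(u,u)$ and satisfies $\|u\|_E\le 2\delta$.

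\emph{Step 3: uniqueness.} The only delicate point is the scope of uniqueness. The contraction argument gives uniqueness among solutions with $\|u\|_E\le 2\delta$, and this is how I would state and prove it. Global uniqueness in $E$ is false in general: for example, with $E=\mathbb{R}$ and $B(u,v)=C_Buv$, the equation $u=e_0-C_Bu^2$ has a second root of size about $1/C_B$. I can extend uniqueness somewhat further. Suppose $v$ is another solution with $\|v\|_E<\frac{1}{C_B}-2\delta$. Then
\[
\|u-v\|_E\le C_B(\|u\|_E+\|v\|_E)\|u-v\|_E,
\]
and the factor $C_B(\|u\|_E+\|v\|_E)$ is less than $1$, which forces $v=u$. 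So uniqueness holds in a ball strictly larger than $\overline{B}(0,2\delta)$, since $\frac{1}{C_B}-2\delta>2\delta$.
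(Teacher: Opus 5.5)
Your argument is correct. It is the standard contraction-mapping proof of this principle, which the paper quotes without proof, and the self-map estimate, the contraction via $B(u,u)-B(v,v)=B(u-v,u)+B(v,u-v)$, and the uniqueness in (and beyond) the $2\delta$-ball are all sound.

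Reading the hypothesis as $\|B(u,v)\|_E\le C_B\|u\|_E\|v\|_E$ is in fact necessary, not just conventional. On $(\mathbb{R}^2,\|\cdot\|_1)$ the map $B(x,y)=\bigl(-(x_1y_2+x_2y_1),\,\tfrac12(x_1y_1-x_2y_2)\bigr)$ satisfies $\|B(x,x)\|_1\le\tfrac58\|x\|_1^2$, so $C_B=\tfrac58$ and $\tfrac{1}{4C_B}=\tfrac25$. Take $e_0=(0,0.38)$ and $\delta=0.38<\tfrac25$. Then both $u=(\pm0.1,\,0.5)$, of norm $0.6\le 2\delta$, solve $u=e_0-B(u,u)$, so uniqueness in the ball fails under the literal diagonal-only bound.
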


\bigskip
\section{On Convolution in Variable Lebesgue Spaces}\label{sec-conv}

This section is devoted to results on convolutions in variable Lebesgue spaces.
We establish estimates of the form
\[
\|k * f\|_Z \le C \|k\|_X \|f\|_Y,
\]
where $X$, $Y$, and $Z$ are variable Lebesgue spaces satisfying appropriate structural relations.
We also develop several auxiliary lemmas that will be used in the proofs of existence theorems
for the  Fractional Navier--Stokes equations in the next section. Recall that, in the classical setting, if $1 \le p,q,r \le \infty$ satisfy 
$
\frac{1}{p} + \frac{1}{q} = 1 + \frac{1}{r},
$ 
then  
$
\|k * f\|_{L^r(\mathbb{R}^n)} \le c \|k\|_{L^p(\mathbb{R}^n)} \|f\|_{L^q(\mathbb{R}^n)}
$ . The following theorem, \cite[Theorem 2.7]{Samak1996}, extends this result to the framework of variable Lebesgue spaces, where the exponents $p$ and $q$ may vary pointwise, while the exponent $r$ remains constant.

\begin{theorem}\label{Thm-Lr-Conv}
Let $p,q\in \mathcal{P}(\mathbb{R}^n)$  and \(r\ge 1\) with $p^+,q^+<+\infty$, such that  almost every \(x\in\mathbb{R}^n\)
\begin{equation}\label{eq:exponent-relation}
\frac{1}{p(x)}+\frac{1}{q(x)} = 1 + \frac{1}{r}.
\end{equation}

 Suppose $k \in L^{\,q^-}(\mathbb{R}^n)\cap L^{\,q^+}(\mathbb{R}^n).$ Then  there
exists a constant \(c>0\) (depending on $p$ and \(r\)) such that
\begin{equation*}
\|k * f\|_{L^r(\mathbb{R}^n)} \le c C \,\|f\|_{L^{p(\cdot)}(\mathbb{R}^n)},
\end{equation*}
for all $ f\in L^{p(\cdot)}(\mathbb{R}^n)$,  where
\begin{align*}
C=A^{\nu}(\|k\|_{L^{q^-}}^{\frac{q^-}{r}} + \|k\|_{L^{q^+}}^{\frac{q^+}{r}}),\ A = \|k\|_{L^{q^-}} + \|k\|_{L^{q^+}} ,\, \text{ and }\,
 \nu =
\begin{cases}
1 - \frac{q^+}{r}, & \text{if } A \le 1,\\
1 - \frac{q^-}{r}, & \text{if } A > 1.
\end{cases}
\end{align*} 
\end{theorem}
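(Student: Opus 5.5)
The plan is to imitate the classical three-factor H\"older proof of Young's inequality. The variable exponents will be evaluated at the integration variable $y$, so that H\"older's inequality in $y$ is applied with exponents $s(\cdot),t(\cdot)$ that depend only on $y$. By homogeneity of both sides in $f$ (the constant $C$ depends only on $k$), I first normalize $\|f\|_{L^{p(\cdot)}(\mathbb{R}^n)}=1$. Since $p^+<\infty$, the unit ball property gives $\varrho_{p(\cdot)}(f)\le 1$.

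Next I define $s,t$ by $\frac1{s(y)}=\frac1{p(y)}-\frac1r$ and $\frac1{t(y)}=\frac1{q(y)}-\frac1r$. These lie in $[0,1]$ because \eqref{eq:exponent-relation} together with $1/q\le 1$ forces $1/p\ge 1/r$, and symmetrically $1/q\ge 1/r$. Moreover $\frac1r+\frac1{s(y)}+\frac1{t(y)}=1$ by \eqref{eq:exponent-relation}. For fixed $x$ I split
\[
|k(x-y)f(y)|=\bigl(|k(x-y)|^{q(y)}|f(y)|^{p(y)}\bigr)^{1/r}\,|f(y)|^{1-p(y)/r}\,|k(x-y)|^{1-q(y)/r}=:F\,G\,H .
\]
I then apply the variable H\"older inequality (\cite[Lemma 3.2.20]{DHHR}, iterated for three factors) with exponents $r,s(\cdot),t(\cdot)$ to get
\[
|k*f(x)|\le c\,\|F\|_{L^r_y}\|G\|_{L^{s(\cdot)}_y}\|H\|_{L^{t(\cdot)}_y}.
\]
The $G$ and $H$ factors are controlled through their modulars. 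For $G$ we have $\varrho_{s(\cdot)}(G)=\int|f|^{p(y)}dy\le 1$, hence $\|G\|_{s(\cdot)}\le1$; points where $s=\infty$ (i.e.\ $p=r$) are harmless since then $G=1$ there. For $H$ we have $\varrho_{t(\cdot)}(H)=\int|k(x-y)|^{q(y)}dy\le \int(|k|^{q^-}+|k|^{q^+})=\|k\|_{q^-}^{q^-}+\|k\|_{q^+}^{q^+}$, uniformly in $x$.

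Raising to the power $r$ and integrating in $x$, then using Fubini and the same bound on the inner $x$-integral, gives
\[
\|k*f\|_{L^r}^r\le c\,\|H\|_{t(\cdot)}^{r}\int\!\!\int|k(x-y)|^{q(y)}|f(y)|^{p(y)}\,dx\,dy\le c\,\|H\|_{t(\cdot)}^{r}\bigl(\|k\|_{q^-}^{q^-}+\|k\|_{q^+}^{q^+}\bigr).
\]
Taking $r$-th roots produces the factor $\|k\|_{q^-}^{q^-/r}+\|k\|_{q^+}^{q^+/r}$, up to a constant.

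The step I expect to be the main obstacle is converting the modular bound on $H$ into a norm bound of the precise form $A^{\nu}$. For this I compute the modular of $H/\lambda$ directly: $\int\lambda^{-t(y)}|k(x-y)|^{q(y)}dy$. Since $t(y)\ge q(y)$ and $t$ ranges between $t^-$ and $t^+$ (determined by $q^\mp$ and $r$), I bound $\lambda^{-t(y)}$ by $\lambda^{-t^\pm}$ according to whether $\lambda\gtrless1$. I then choose $\lambda$ as a suitable power of $A=\|k\|_{q^-}+\|k\|_{q^+}$ so that the modular is at most $1$. This gives $\|H\|_{t(\cdot)}\lesssim A^{1-q^+/r}$ when $A\le1$ and $\lesssim A^{1-q^-/r}$ when $A>1$, which is the case split defining $\nu$. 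The remaining work is routine exponent bookkeeping and tracking the H\"older constant, which depends on $p$ and $r$ only.
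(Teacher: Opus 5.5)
The paper does not prove this statement; it quotes it from Samko \cite[Theorem 2.7]{Samak1996}, so your proposal has to stand on its own. Most of it does. The splitting $F\,G\,H$, the identities $(1-p/r)s=p$ and $(1-q/r)t=q$, the bound $\|G\|_{L^{s(\cdot)}}\le 1$ and the Fubini step are correct. So is the pointwise bound $|k|^{q(y)}\le|k|^{q^-}+|k|^{q^+}$, which is exactly why $k\in L^{q^-}\cap L^{q^+}$ suffices even though the exponent sits at $y$ rather than at $x-y$. Together these give the factor $\|k\|_{q^-}^{q^-/r}+\|k\|_{q^+}^{q^+/r}$.

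The gap is in the step you flagged. Bounding $\lambda^{-t(y)}$ by $\lambda^{-t^{\pm}}$ \emph{separately} from $|k(x-y)|^{q(y)}$ gives at best $\varrho_{t(\cdot)}(H/\lambda)\le\lambda^{-t^+}\bigl(\|k\|_{q^-}^{q^-}+\|k\|_{q^+}^{q^+}\bigr)$ for $\lambda\le 1$. When $A\le 1$ the bracket can be of size $A^{q^-}$, so this forces $\lambda\gtrsim A^{q^-/t^+}$ with $q^-/t^+=\tfrac{q^-}{q^+}-\tfrac{q^-}{r}$. Since $(1-\tfrac{q^+}{r})-(\tfrac{q^-}{q^+}-\tfrac{q^-}{r})=(q^+-q^-)(\tfrac1{q^+}-\tfrac1r)\ge0$, you only get $A^{q^-/t^+}\ge A^{1-q^+/r}$. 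This is strictly worse whenever $q^-<q^+<r$; for example $r=4$, $q^-=2$, $q^+=3$ gives $A^{1/6}$ instead of $A^{1/4}$. Symmetrically, for $A>1$ you get $A^{q^+/t^-}$ with $q^+/t^-\ge 1-\tfrac{q^-}{r}$. Decoupling $t(y)$ from $q(y)$ discards exactly the information that produces $\nu$, and the paper needs this precise $\nu$ in Lemma~\ref{LemmaCom 1}.

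The fix is to keep the two exponents together. Since $t(y)/q(y)=r/(r-q(y))$,
\[
\lambda^{-t(y)}|k(x-y)|^{q(y)}=\bigl(\lambda^{-r/(r-q(y))}\,|k(x-y)|\bigr)^{q(y)} .
\]
Take $\lambda=A^{\nu}$. Then $\nu r/(r-q(y))$ equals $(r-q^+)/(r-q(y))\in[0,1]$ if $A\le1$, and $(r-q^-)/(r-q(y))\ge1$ if $A>1$. In both cases $\lambda^{-r/(r-q(y))}\le A^{-1}$, so
\[
\varrho_{t(\cdot)}(H/A^{\nu})\le\int_{\mathbb{R}^n}\bigl((|k|/A)^{q^-}+(|k|/A)^{q^+}\bigr)\,dy\le 2 .
\]
By convexity $\|H\|_{L^{t(\cdot)}}\le 2A^{\nu}$, uniformly in $x$.

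The set where $q(y)=r$ needs a separate word, since there $t=\infty$ and $H\equiv1$. It forces $q^+=r$, hence $\nu=0$ if $A\le1$ and $\nu\ge0$ if $A>1$. Thus $\lambda\ge1$, and that set contributes nothing to the modular.
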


We may see that we have $C \leq 2 A^{\eta}$ where 
\begin{align}
\eta =
\begin{cases}\label{eta-label}
1 - \frac{q^+-q^-}{r}, & \text{if } A \le 1,\\
1 + \frac{q^+-q^-}{r}, & \text{if } A > 1.
\end{cases}
\end{align}

The following two lemmas are established to facilitate the proof of the existence results for the fractional incompressible Navier-–Stokes equations (\ref{NS_Intro}), which are detailed in Section \ref{Sec-Exis}. They concern  estimates for the convolution 
$\eta_{t,m} * f$ 
between the variable Lebesgue spaces $L^{p(\cdot)}(\mathbb{R}^n)$ and
$L^{\frac{p(\cdot)}{2}}(\mathbb{R}^n)$.

\begin{lemma}\label{LemmaCom 1} Let \(p \in \mathcal{P}^{\log}(\mathbb{R}^n)\) and $m>n$ with $2\leq p^-\leq p^+ <+\infty$ and  $p^+=p_\infty$, then 
\begin{align*}
\|\eta_{t,m}*f\|_{L^{p(\cdot)}(\mathbb{R}^n)}
\leq C t^{-n\vartheta(t)} \|f\|_{L^{ \frac{p(\cdot)}{2}}(\mathbb{R}^n)}
\end{align*}
for every $t\in (0,+\infty)$, where
\begin{align}
\vartheta(t) =
\begin{cases}\label{rho-label}
\left(\frac{2}{p^-}-\frac{1}{p_\infty} \right)
 \left(\frac{2+p_\infty(1-\frac{2}{p^-})}{1+p_\infty(1-\frac{2}{p^-})}- \frac{1}{p_\infty-1}\right) , & \text{if } 0<t \le 1,\\
\frac{1}{p_\infty}
\left(\frac{p_\infty(1-\frac{2}{p^-})}{1+p_\infty(1-\frac{2}{p^-})}+ \frac{1}{p_\infty-1}\right)
 , & \text{if } t > 1,
\end{cases}
\end{align}
and  the constant $C$ depends on the exponent $p$ and the constant $m$.
\end{lemma}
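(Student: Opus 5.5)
Write $r(\cdot):=p(\cdot)/2$, so $1\le r^-\le r^+=r_\infty=p_\infty/2<\infty$. The plan is to pass from the variable exponent to constant exponents, apply the classical Young inequality there, and return. Theorem~\ref{Thm-Lr-Conv} (with kernel $k=\eta_{t,m}$, constant target exponent) and the embedding \eqref{Embed-Eq} are the tools. Since $p^+=p_\infty$, \eqref{Embed-Eq} gives $L^{p_\infty}(\mathbb{R}^n)\hookrightarrow L^{p(\cdot)}(\mathbb{R}^n)$. It therefore suffices to estimate $\|\eta_{t,m}*f\|_{L^{p_\infty}}$, a constant-exponent norm, in terms of $\|f\|_{L^{p(\cdot)/2}}$.

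For that step we apply Theorem~\ref{Thm-Lr-Conv} with target exponent $p_\infty$, input exponent $p(\cdot)/2$, and kernel exponent $q(\cdot)$ defined by
\[
\frac{1}{q(x)}=1+\frac{1}{p_\infty}-\frac{2}{p(x)} .
\]
We need $q(x)\ge1$, i.e.\ $2/p(x)\ge 1/p_\infty$, which holds since $p\le p_\infty$. We also need $q^+<\infty$, i.e.\ $1+1/p_\infty-2/p^-<1$ is not required; rather $1/q\le 1$ holds and $1/q>0$ holds because $2/p\le 1$. Explicitly,
\[
q^-=\Bigl(1+\tfrac1{p_\infty}-\tfrac{2}{p^-}\Bigr)^{-1},\qquad q^+=\Bigl(1-\tfrac1{p_\infty}\Bigr)^{-1}=\tfrac{p_\infty}{p_\infty-1}.
\]
Theorem~\ref{Thm-Lr-Conv} together with \eqref{eta-label} gives
\[
\|\eta_{t,m}*f\|_{L^{p_\infty}}\le c\,A^{\eta}\|f\|_{L^{p(\cdot)/2}},\qquad A=\|\eta_{t,m}\|_{L^{q^-}}+\|\eta_{t,m}\|_{L^{q^+}},
\]
where $\eta=1\mp (q^+-q^-)/p_\infty$ according as $A\le1$ or $A>1$.

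Next we compute the kernel norms by scaling: $\|\eta_{t,m}\|_{L^s}=c(m,s)\,t^{-n(1-1/s)}$ for $s\ge1$ and $m>n$. For $0<t\le1$ the dominant term is the one with the larger exponent $1-1/s$, namely $s=q^+$, so $A\approx t^{-n/p_\infty}$ (up to constants). For $t>1$ the dominant term is the smaller power, $s=q^-$, so $A\approx t^{-n(2/p^- - 1/p_\infty)}$. Which branch of $\eta$ applies is determined by whether $A$ is small or large. For $t\le1$, $A\gtrsim1$, so the case $A>1$ holds, up to constants that can be absorbed by rescaling $k$ or by a monotonicity remark. For $t$ large, $A<1$. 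Raising $A$ to the power $\eta$ and multiplying out should then produce exactly $t^{-n\vartheta(t)}$ with $\vartheta$ as in \eqref{rho-label}. The factors $\tfrac{1}{p_\infty-1}$ and $\tfrac{p_\infty(1-2/p^-)}{1+p_\infty(1-2/p^-)}$ are consistent with $q^+$ and $q^-$ written in terms of $p_\infty$ and $p^-$, together with their difference divided by $p_\infty$.

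The main obstacle is the bookkeeping in this last step: matching the two-case constant of Theorem~\ref{Thm-Lr-Conv} to the dyadic regimes $t\le1$, $t>1$. In particular, the case distinction in that theorem is on $A\le1$ versus $A>1$, not on $t$, so the transition region $A\approx1$ must be handled. I would do this by bounding $A^\nu$ and the sum $\|k\|_{L^{q^-}}^{q^-/r}+\|k\|_{L^{q^+}}^{q^+/r}$ separately by powers of $t$ in each regime, using $t^{a}\le t^{b}$ for $a\ge b$ when $t\le1$ and the reverse when $t>1$. This yields a possibly crude exponent that equals (or is dominated by) the stated $\vartheta(t)$. I would also double-check the exponent algebra to confirm that the displayed formula arises exactly.
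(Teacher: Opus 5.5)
Your strategy is the paper's: Theorem~\ref{Thm-Lr-Conv} with target exponent $p_\infty$, input exponent $p(\cdot)/2$ and $\frac1{q(x)}=1+\frac1{p_\infty}-\frac2{p(x)}$, then the bound $C\le 2A^{\eta}$, the scaling $\|\eta_{t,m}\|_{L^s}=c\,t^{-n(1-1/s)}$, and finally $L^{p_\infty}\hookrightarrow L^{p(\cdot)}$. However, the explicit values of $q^{\pm}$ you write down are interchanged, and the error propagates. Since $1/q(x)$ is smallest where $p(x)$ is smallest, the correct values are
\[
q^{+}=\Bigl(1+\tfrac1{p_\infty}-\tfrac2{p^-}\Bigr)^{-1}\ (\text{where } p=p^-),\qquad q^{-}=\tfrac{p_\infty}{p_\infty-1}\ (\text{where } p=p_\infty).
\]
For example, $p^-=2$, $p_\infty=4$ gives $q\in[4/3,4]$. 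Hence $1-\frac1{q^+}=\frac2{p^-}-\frac1{p_\infty}\ge\frac1{p_\infty}=1-\frac1{q^-}$, and both of your asymptotic claims for $A$ are backwards.

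\begin{itemize}
\item For $0<t\le1$ one has $A\ge\|\eta_{t,m}\|_{L^{q^+}}=c\,t^{-n(2/p^- -1/p_\infty)}$. This is not $O(t^{-n/p_\infty})$ when $p^-<p_\infty$.
\item For $t>1$ the slowest-decaying term is $t^{-n/p_\infty}$, so $A\lesssim t^{-n(2/p^- -1/p_\infty)}$ fails.
\end{itemize}

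With your labels $q^+-q^-$ is also negative, so the sign in $\eta$ flips. ``Multiplying out'' would then produce the two branches of $\vartheta$ interchanged. This is not a harmless slip. On $(0,1]$ it would assert the exponent $\frac1{p_\infty}\bigl(1-\frac{q^+-q^-}{p_\infty}\bigr)$, which equals $1/12$ for $p^-=2$, $p_\infty=4$. But testing with $f=t^{-n}\chi_{B(x_0,t)}$ at a point where $p(x_0)=p^-=2$ shows that no exponent below $1/p^-=1/2$ can hold as $t\to0$. In this test, log-H\"older continuity keeps $\|f\|_{L^{p(\cdot)/2}}$ bounded, while $\|\eta_{t,m}*f\|_{L^{p(\cdot)}}\gtrsim t^{-n/2}$. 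So your expectation that the algebra ``should produce exactly'' \eqref{rho-label} is false as you set it up.

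With the labels corrected, the argument closes exactly as in the paper. Put $M=\max\{1,\|\eta_{1,mq^-}\|_1^{1/q^-}+\|\eta_{1,mq^+}\|_1^{1/q^+}\}$. Then $A\le M t^{-n(1-1/q^+)}$ for $t\le1$ and $A\le Mt^{-n(1-1/q^-)}$ for $t>1$. The mismatch between the branch conditions ($A\le1$ versus $A>1$) and the time regimes needs no rescaling. Write $\delta=(q^+-q^-)/p_\infty\in[0,1)$; note $q^+\le p_\infty$ because $p^-\ge2$.

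\begin{itemize}
\item If $t\le1$ and $A\le1$, then $A^{\eta}\le1\le t^{-n(1-1/q^+)(1+\delta)}$.
\item If $t>1$ and $A>1$, then $A^{1+\delta}\le M^{1+\delta}t^{-n(1-1/q^-)(1+\delta)}\le M^{1+\delta}t^{-n(1-1/q^-)(1-\delta)}$.
\end{itemize}

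Thus $\vartheta=(1-\frac1{q^+})(1+\delta)$ on $(0,1]$ and $\vartheta=(1-\frac1{q^-})(1-\delta)$ on $(1,\infty)$. Substituting $\frac{q^-}{p_\infty}=\frac1{p_\infty-1}$ and $\frac{q^+}{p_\infty}=\frac{1}{1+p_\infty(1-2/p^-)}$ gives precisely \eqref{rho-label}.
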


\begin{proof}
Applying Theorem  \ref{Thm-Lr-Conv} with $\frac{p}{2}$,  $r=p_\infty=p^+$  and $q(\cdot)\in \mathcal{P}(\mathbb{R}^n)$ defined be the relation $\frac{1}{p(x)/2}+\frac{1}{q(x)}=1+\frac{1}{p_\infty}$, hence
$$
\|\eta_{t,m} * f\|_{L^{p_\infty}(\mathbb{R}^n)} \le c \left( \|\eta_{t,m}\|_{L^{q^-}(\mathbb{R}^n)} + \|\eta_{t,m}\|_{L^{q^+}(\mathbb{R}^n)} \right)^{\eta} \,\|f\|_{L^{\frac{p(\cdot)}{2}}(\mathbb{R}^n)}\ ,
$$
for every  $t >0$, where $\eta$ is from (\ref{eta-label}) and 
 $$\frac{1}{q^-}=1-\frac{1}{p_\infty} \ , \  \frac{1}{q^+}=1+\frac{1}{p_\infty}-\frac{2}{p^-}.$$
For every $t\in (0,+\infty)$ we have 
\begin{equation*}
\|\eta_{t,m}\|_{L^{q^-}(\mathbb{R}^n)} = t^{-n(1-\frac{1}{q^-})} \|\eta_{t,m{q^-}}\|_{L^1(\mathbb{R}^n)}^{\frac{1}{q^-}} \ ,\ \|\eta_{t,m}\|_{L^{q^+}(\mathbb{R}^n)} = t^{-n(1-\frac{1}{q^+})} \|\eta_{t,m{q^+}}\|_{L^1(\mathbb{R}^n)}^{\frac{1}{q^+}}.
\end{equation*}
It follows 
\begin{align*}
\left( \|\eta_{t,m}\|_{L^{q^-}} + \|\eta_{t,m}\|_{L^{q^+}} \right)^{\eta} &\leq M^{1 + \frac{q^+-q^-}{r}} t^{-n(1-\frac{1}{q^+})(1 + \frac{q^+-q^-}{r})},\, \text{ if } 0< t\leq 1 \\
\left( \|\eta_{t,m}\|_{L^{q^-}} + \|\eta_{t,m}\|_{L^{q^+}} \right)^{\eta} & \leq M^{1 + \frac{q^+-q^-}{r}} t^{-n(1-\frac{1}{q^-})(1 - \frac{q^+ - q^-}{r})},\, \text{ if }  t> 1.
\end{align*}
where $M=\max\{1\, ,\, \|\eta_{t,m{q^-}}\|_{L^1}^{\frac{1}{q^-}}+\|\eta_{t,m{q^+}}\|_{L^1}^{\frac{1}{q^+}}\,\}$. Since \(p \in \mathcal{P}^{\log}(\mathbb{R}^n)\) and  $p^+=p_\infty$ by Lemma \ref{lem2} we have  $L^{p_\infty}(\mathbb{R}^n) 
\;\hookrightarrow\; L^{p(\cdot)}(\mathbb{R}^n)$, it follows   that 
\begin{align*}
\|\eta_{t,m}*f\|_{L^{p(\cdot)}(\mathbb{R}^n)}\leq C \|\eta_{t,m}*f\|_{L^{p_\infty}}
\leq C t^{-n\vartheta(t)} \|f\|_{L^{ \frac{p(\cdot)}{2}}(\mathbb{R}^n)}
\end{align*}
where 
\[
\vartheta(t) =
\begin{cases}
\left(1-\frac{1}{q^+} \right) \left( 1 + \frac{q^+-q^-}{p_\infty}\right) , & \text{if } 0<t \le 1,\\
\left(1-\frac{1}{q^-} \right)\left(  1 - \frac{q^+-q^-}{p_\infty}\right) , & \text{if } t > 1.
\end{cases}
\]
To write $\vartheta$ in terms of $p_\infty$ and $p^-$,  we can see that 
\begin{align*}
1-\frac{1}{q^-} = \frac{1}{p_\infty},\ 1-\frac{1}{q^+}=\frac{2}{p^-}-\frac{1}{p_\infty},\
\frac{q^-}{p_\infty}=\frac{1}{p_\infty-1} ,\ \frac{q^+}{p_\infty}=\frac{1}{1+p_\infty(1-\frac{2}{p^-})},
\end{align*} 
which yields the expression of $\vartheta$ presented in (\ref{rho-label}). The proof is completed.

\end{proof}
\begin{remark}\label{Remark on mq}
We observe from the proof that the conclusion of Lemma~\ref{LemmaCom 1} is guaranteed by the integrability conditions $\eta_{t,mq^{\pm}} \in L^{1}(\mathbb{R}^n)$. This condition is satisfied whenever $mq^-=m\frac{p_\infty}{p_\infty-1} > n$. We also have 
 $\vartheta(t) \geq \frac{2}{p^-}-\frac{1}{p_\infty} $ for every $0< t\leq 1$ and  $\vartheta(t) \leq \frac{n}{p_\infty} $ for every $t> 1$, Furthermore, it follows from \eqref{eta-label} that the parameter $\vartheta$ satisfies the inequality, $\vartheta|_{(1,+\infty)} \leq  \vartheta|_{(0,1]}$, that is the value of $\vartheta$ on $(1,+\infty)$ is less than or equal its value on $(0,1]$. 
 In the case of a constant exponent, we have $\vartheta(t) = \frac{1}{p}$ for every $t > 0 $, which coincides with the scaling in the classical Young's convolution inequality.  A more general version of  Lemma~\ref{LemmaCom 1} is stated at the end of Section~\ref{Sec-Exis} in Theorem~\ref{THm - eta Lr-Lp conv}.
\end{remark}
\begin{lemma}\label{LemmaCom 2}
Let \(p \in \mathcal{P}^{\log}(\mathbb{R}^n)\) and $m>n$ with $2\leq p^- $ and $p_\infty=\infty$ , then 
\begin{align*}
\|\eta_{t,m}*f\|_{L^{p(\cdot)}(\mathbb{R}^n)}
\leq C t^{-n\vartheta(t)} \|f\|_{L^{ \frac{p(\cdot)}{2}}(\mathbb{R}^n)}
\end{align*}
where $\vartheta(t)=\frac{2}{p^-}$ if $0<t \le 1$ and $\vartheta(t)=0$ if $t>1$.
\end{lemma}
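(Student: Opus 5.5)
The plan is to combine two estimates: an $L^\infty$ bound for $\eta_{t,m}*f$ with the correct power of $t$, and the uniform $L^{p(\cdot)/2}$ bound from Lemma~\ref{Moll-Thm}. I then pass to $L^{p(\cdot)}$ by a pointwise comparison of modulars. Since $p_\infty=\infty$ (so $p^+=\infty$ is allowed), Theorem~\ref{Thm-Lr-Conv} with $r=p_\infty$ is not available. This is why I avoid the route of Lemma~\ref{LemmaCom 1} and instead use $L^\infty$ as the "large exponent" endpoint.

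\textbf{Step 1 ($L^{p(\cdot)/2}$ bound).} The exponent $p/2$ belongs to $\mathcal{P}^{\log}(\mathbb{R}^n)$, because $\frac{2}{p}=2\cdot\frac1p\in C^{\log}$, and $(p/2)^-\ge 1$. The kernel $\eta_{t,m}$ is its own least bell-shaped majorant. By Lemma~\ref{Moll-Thm}, i.e. \eqref{eneq p norm} applied to the exponent $p/2$,
\[
\|\eta_{t,m}*f\|_{L^{p(\cdot)/2}}\le c_1\|f\|_{L^{p(\cdot)/2}},
\]
uniformly in $t>0$.

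\textbf{Step 2 ($L^\infty$ bound).} For fixed $x$, I apply the variable H\"older inequality with exponents $p(\cdot)/2$ and $s(\cdot):=(p(\cdot)/2)'$, which ranges in $[1,q]$ with $q:=(p^-/2)'$, so that $1-\frac1q=\frac{2}{p^-}$:
\[
|\eta_{t,m}*f(x)|\le C\,\|\eta_{t,m}(x-\cdot)\|_{L^{s(\cdot)}}\|f\|_{L^{p(\cdot)/2}}.
\]
To bound the translated kernel, I use the modular splitting $|g|^{s(y)}\le |g|^{1}+|g|^{q}$. This gives
\[
\|g\|_{L^{s(\cdot)}}\le C\big(\|g\|_{L^1}+\|g\|_{L^q}\big),
\]
after normalizing $g$ so that the right-hand side is at most $1$ and invoking the unit ball property. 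Both constant-exponent norms are translation invariant, and $\|\eta_{t,m}\|_{L^1}=c(m)$. Scaling gives $\|\eta_{t,m}\|_{L^q}=t^{-n(1-1/q)}\|\eta_{1,mq}\|_{L^1}^{1/q}=c\,t^{-2n/p^-}$, finite since $mq\ge m>n$. Hence
\[
\|\eta_{t,m}*f\|_{L^\infty}\le c_2\max\{1,t^{-2n/p^-}\}\|f\|_{L^{p(\cdot)/2}}.
\]
I must treat the points where $p(y)=\infty$ carefully: there $s(y)=1$ and the H\"older inequality still holds with the paper's conventions.

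\textbf{Step 3 (interpolation via modulars).} Set $\lambda:=\max\{c_1,c_2\}\max\{1,t^{-2n/p^-}\}\|f\|_{L^{p(\cdot)/2}}$ and $g:=\eta_{t,m}*f/\lambda$. Step 2 gives $|g|\le 1$ a.e. Step 1 together with $\max\{1,t^{-2n/p^-}\}\ge1$ gives $\|g\|_{L^{p(\cdot)/2}}\le1$, hence $\varrho_{p(\cdot)/2}(g)\le1$. Pointwise $\omega_{p(x)}(|g|)\le\omega_{p(x)/2}(|g|)$ when $|g|\le1$: for finite $p(x)$ this is $|g|^{p}\le|g|^{p/2}$, and for $p(x)=\infty$ both sides vanish since $|g|\le1$. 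Thus $\varrho_{p(\cdot)}(g)\le1$, i.e. $\|\eta_{t,m}*f\|_{L^{p(\cdot)}}\le\lambda$. Reading off $\lambda$ gives the factor $t^{-2n/p^-}$ for $0<t\le1$ and the constant $1=t^{0}$ for $t>1$, which is exactly the claimed $\vartheta$.

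\textbf{Main obstacle.} The delicate point is Step 2: the Hölder bound with a translated kernel in a variable-exponent norm. Translation invariance fails for $L^{s(\cdot)}$ in general, and I circumvent this through the two-sided modular comparison with the constant exponents $1$ and $q$. This is also where the infinite values of $p$ must be tracked.
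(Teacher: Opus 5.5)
Your proof is correct, but it gets back to $L^{p(\cdot)}$ by a different mechanism than the paper.

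\textbf{What you share with the paper.} Both arguments go through $L^\infty$. Both apply H\"older with $(p(\cdot)/2)'$ and bound the translated kernel uniformly in $x$. The paper does this by a direct modular computation using $\eta_{t,m}\le t^{-n}$ and $t^{-2n/p(y)}\le t^{-2n/p^-}$. This is equivalent to your $L^1\cap L^{q}$ splitting. For $p^-=2$ you should read $q=\infty$ and $\|\eta_{t,m}\|_{L^\infty}=t^{-n}$.

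\textbf{Where you differ.} The paper uses $p_\infty=\infty$ directly. Lemma~\ref{lem2} with the constant exponent $\infty$ gives $1\in L^{p(\cdot)}(\mathbb{R}^n)$, hence $L^\infty(\mathbb{R}^n)\hookrightarrow L^{p(\cdot)}(\mathbb{R}^n)$, and the $L^\infty$ bound alone suffices. You instead add the uniform $L^{p(\cdot)/2}$ bound from Lemma~\ref{Moll-Thm} and use the pointwise modular comparison. This amounts to the embedding $L^{p(\cdot)/2}\cap L^\infty\hookrightarrow L^{p(\cdot)}$, which holds for every exponent.

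\textbf{What each buys.} The paper's route needs no regularity of $p$ beyond $1\in L^{p(\cdot)}$ (cf.\ the remark after the lemma). That condition fails as soon as $p_\infty<\infty$. Your route uses the log-H\"older regularity of $p/2$ but never uses $p_\infty=\infty$. So it yields the same estimate for every $p\in\mathcal{P}^{\log}(\mathbb{R}^n)$ with $p^-\ge 2$, whatever $p_\infty$ is.

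\textbf{A possible sharpening.} Your Step 3 is lossy, and your route can be improved where the paper's cannot. The paper's route controls $\|h\|_{L^{p(\cdot)}}$ only through $\|h\|_{L^\infty}$, whose factor $t^{-2n/p^-}$ cannot be improved. Set $A:=\|h\|_{L^\infty}$ and take any $\nu>\|h\|_{L^{p(\cdot)/2}}$. Since $|h|\le A$, one has $\bigl(|h|/\sqrt{A\nu}\bigr)^{p(x)}\le (|h|/\nu)^{p(x)/2}$ pointwise, and trivially so where $p(x)=\infty$. Hence $\|h\|_{L^{p(\cdot)}}\le \|h\|_{L^\infty}^{1/2}\|h\|_{L^{p(\cdot)/2}}^{1/2}$. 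Applied to $h=\eta_{t,m}*f$, this improves the small-time factor from $t^{-2n/p^-}$ to $t^{-n/p^-}$.
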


\begin{proof}Since \(p \in \mathcal{P}^{\log}(\mathbb{R}^n)\) and  $p_\infty=\infty$,  by Lemma \ref{lem2} we have  $L^{\infty}(\mathbb{R}^n) 
\;\hookrightarrow\; L^{ p(\cdot)}(\mathbb{R}^n)$,  H\"{o}lder's inequality yields 
\begin{align*}
\|\eta_{t,m}*f\|_{L^{p(\cdot)}(\mathbb{R}^n)}&\leq C \|\eta_{t,m}*f\|_{L^{\infty}(\mathbb{R}^n)}\\
&\leq C \,\underset{x\in \mathbb{R}^{n}}{\text{ess-sup}} \|\eta_{t,m}(x-\cdot)\|_{L^{ q(\cdot)}(\mathbb{R}^n)}\|f\|_{L^{ \frac{p(\cdot)}{2}}(\mathbb{R}^n)}
\end{align*}
where $q$ is the conjugate exponent of $\frac{p}{2}$. Now, take  $\lambda > c_m t^{-n\vartheta(t)}$ arbitrary, where $c_m=\max\{\, 1,\|\eta _{t,m}\|_{L^{1}(\mathbb{R}^n)}\,\}$, then for all $x\in\mathbb{R}^n$ and $t>1$
\begin{align*}
\varrho _{q(\cdot)}\left(\frac{\eta _{t,m}(x-\cdot)}{\lambda}   \right)
\leq \int_{\mathbb{R}^n} \frac{\eta _{t,m}(x-y) }{c_m} \,dy
\le 1.
\end{align*}
and for all $x\in\mathbb{R}^n$ and $0<t \le 1$
\begin{align*}
\varrho _{q(\cdot)}\left(\frac{\eta _{t,m}(x-\cdot)}{\lambda}  \right)
&= \int_{\mathbb{R}^n}\left(\frac{t^{-n\left(1-\frac{1}{q(y)} \right)}}{\lambda}\right)^{q(y)} \frac{t^{-n}}{(1+t^{-1}|x-y|)^m} \,dy\\
&\leq \int_{\mathbb{R}^n} \left(\frac{t^{-\frac{2n}{p^-}}}{\lambda}\right)^{q(y)} \eta _{t,m}(x-y)\,dy\\
&\le 1.
\end{align*}
by letting $\lambda\to c_m t^{-n\vartheta(t)}$, it follows that $\underset{x\in \mathbb{R}^{n}}{\text{ess-sup}} \|\eta_{t,m}(x-\cdot)\|_{L^{ q(\cdot)}(\mathbb{R}^n)}\leq c_m t^{-n\vartheta(t)}$ for every $t>0$, thus the proof is completed.

\end{proof}
\begin{remark}
 Lemmas  \ref{LemmaCom 1} and \ref{LemmaCom 2} still hold if  $1\in L^{s(\cdot)}(\mathbb{R}^{n})$ where the exponent $s(\cdot)$ is defined by
\[
\frac{1}{s(x)} := \max \left\{\, \frac{1}{p(x)} -\frac{1}{p^+} ,\, 0 \, \right\}, \qquad \forall x \in \mathbb{R}^{n}.
\]
since $L^{p^+}(\mathbb{R}^n) \; \hookrightarrow \; L^{ p(\cdot)}(\mathbb{R}^n)$  by Lemma \ref{lem1}.
\end{remark}

The following theorem generalize the  convolution theorem on constant variable Lebesgue spaces 
\begin{theorem}\label{THM 1}  Let $p,q,r\in[1,\infty]$ and $A,B,C,D \in \mathcal{P}(\mathbb{R}^n)$ such that  $\frac{1}{p}+\frac{1}{q}+\frac{1}{r}=1$ and $\frac{A(x)}{p}+\frac{C(x)}{r} = 1,\frac{B(x)}{q}+\frac{D(x)}{r} = 1$. If 
$k\in L^{A(\cdot)}(\mathbb{R}^n)\cap L^{C(\cdot)}(\mathbb{R}^n) $ and $f\in L^{B(\cdot)}(\mathbb{R}^n)\cap L^{D(\cdot)}(\mathbb{R}^n)$,  then 
\begin{equation*}
\|k*f\|_{L^r(\mathbb{R}^n)}\leq \|k\|_{L^{A(\cdot)}(\mathbb{R}^n)\cap L^{C(\cdot)}(\mathbb{R}^n)}\|f\|_{L^{B(\cdot)}(\mathbb{R}^n)\cap L^{D(\cdot)}(\mathbb{R}^n)}.
\end{equation*}
\end{theorem}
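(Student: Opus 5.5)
The plan is to adapt the classical three-factor H\"older proof of Young's inequality, splitting the integrand with the \emph{variable} exponents, so that only modulars, and not norms, appear inside the integrals. By bilinearity of $(k,f)\mapsto k*f$ and homogeneity of the $L^r$ norm, it suffices to treat $\|k\|_{L^{A(\cdot)}\cap L^{C(\cdot)}}\le 1$ and $\|f\|_{L^{B(\cdot)}\cap L^{D(\cdot)}}\le 1$ and prove $\|k*f\|_{L^r}\le 1$. (If $\|k\|_{L^{A(\cdot)}\cap L^{C(\cdot)}}=0$ the claim is trivial; otherwise one replaces $k$ by $k/\lambda$ with $\lambda$ slightly larger than the norm and lets $\lambda$ decrease.) By the unit ball property recalled in Section~\ref{sec-pre}, this normalization gives
\[
\varrho_{A(\cdot)}(k),\ \varrho_{C(\cdot)}(k),\ \varrho_{B(\cdot)}(f),\ \varrho_{D(\cdot)}(f)\le 1 .
\]

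The key pointwise step comes from the relations $\frac{A(z)}{p}+\frac{C(z)}{r}=1$ and $\frac{B(y)}{q}+\frac{D(y)}{r}=1$. Where the exponents are finite, these give
\[
|k(z)|=|k(z)|^{A(z)/p}\,|k(z)|^{C(z)/r}, \qquad |f(y)|=|f(y)|^{B(y)/q}\,|f(y)|^{D(y)/r}.
\]
Putting $z=x-y$, I would regroup the product as
\[
|k(x-y)f(y)|=\Big(|k(x-y)|^{C(x-y)}|f(y)|^{D(y)}\Big)^{1/r}\Big(|k(x-y)|^{A(x-y)}\Big)^{1/p}\Big(|f(y)|^{B(y)}\Big)^{1/q}.
\]
Next I would apply the classical H\"older inequality in $y$ with the constant exponents $r,p,q$, which is allowed since $\frac1p+\frac1q+\frac1r=1$. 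This gives
\[
|k*f(x)|\le \Big(\int |k(x-y)|^{C(x-y)}|f(y)|^{D(y)}\,dy\Big)^{1/r}\varrho_{A(\cdot)}(k)^{1/p}\,\varrho_{B(\cdot)}(f)^{1/q}.
\]
The translation $y\mapsto x-y$ preserves the modular of $k$, because the exponent is translated together with $k$. Finally I would raise to the power $r$, integrate in $x$, and use Tonelli's theorem. The double integral factorizes as $\varrho_{C(\cdot)}(k)\,\varrho_{D(\cdot)}(f)$, so
\[
\|k*f\|_{L^r}\le \varrho_{C(\cdot)}(k)^{1/r}\varrho_{A(\cdot)}(k)^{1/p}\varrho_{D(\cdot)}(f)^{1/r}\varrho_{B(\cdot)}(f)^{1/q}\le 1,
\]
with constant exactly $1$, as in the statement.

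The main obstacle will be the endpoint cases: one of $p,q,r$ equal to $\infty$, or $A,B,C,D$ taking the value $\infty$ on some set, where the modular is given by $\omega_\infty$ rather than a power. For instance, if $r=\infty$ then $A\equiv p$ and $B\equiv q$ are constant and the claim is classical H\"older. If $p=\infty$, the relation forces $C\equiv r$ and the factor $\varrho_{A(\cdot)}(k)^{1/p}$ must be read as the condition $|k|\le 1$ a.e. on $\{A=\infty\}$, which the normalization supplies. I would handle these cases by splitting the domain into the sets where the relevant exponents are finite or infinite and checking the pointwise factorization separately on each. On the infinite-exponent sets, the normalization gives $|k|\le1$ (respectively $|f|\le1$) there, so the corresponding factor can simply be bounded by $1$.
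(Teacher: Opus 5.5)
Your proposal is correct and follows essentially the same route as the paper. Both use the pointwise split $|k(x-y)f(y)| = (|k|^{A})^{1/p}(|f|^{B})^{1/q}(|k|^{C}|f|^{D})^{1/r}$, the classical three-exponent H\"older inequality in $y$, and Tonelli to factor the remaining double integral as $\varrho_{C(\cdot)}(k)\varrho_{D(\cdot)}(f)$; your homogeneity normalization plays the role of the paper's $\lambda_1,\lambda_2$. One small correction on the endpoint $p=\infty$, a case the paper does not treat explicitly: there the H\"older factor is just $|k|^{0}$, so no condition on $k$ is needed, contrary to your remark about $|k|\le 1$ on $\{A=\infty\}$, and that case goes through exactly as the main one.
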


\begin{proof}
For $p,q,r\in(1,\infty)$, define two functions $\alpha,\beta$ such that $A(x)=p\alpha(x)$ and $B(x)=q\beta(x)$ then $C(x)=r(1-\alpha(x),D(x)=r(1-\beta(x))$ for every $x\in\mathbb{R}^n$.
Consider arbitrary two numbers   $\lambda_1>\max(\|k\|_{L^{A(\cdot)}(\mathbb{R}^n)},\|k\|_{ L^{C(\cdot)}(\mathbb{R}^n)})$ and $\lambda_2>\max(\|f\|_{L^{B(\cdot)}(\mathbb{R}^n)},\|f\|_{ L^{D(\cdot)}}(\mathbb{R}^n))$, hence 
\begin{equation*}
\varrho_{A(\cdot)}\left(\frac{k}{\lambda_1}\right) \leq 1 , \varrho_{C(\cdot)}\left(\frac{k}{\lambda_1}\right)\leq 1 , 
\varrho_{B(\cdot)}\left(\frac{f}{\lambda_2}\right)\leq 1 , \varrho_{D(\cdot)}\left(\frac{f}{\lambda_2}\right)\leq 1.
\end{equation*}
Set $\mu=\lambda_1\lambda_2$. For all  $x,t\in\mathbb{R}^n$ we have 
\begin{align*}
\frac{|k(x-t)f(t)|}{\mu}=\left|\frac{k(x-t)}{\lambda_1}\right|^{\alpha(x-t)}
\left|\frac{f(t)}{\lambda_2}\right|^{\beta(t)}
\left|\frac{k(x-t)}{\lambda_1}\right|^{1-\alpha(x-t)}
\left|\frac{f(t)}{\lambda_2}\right|^{1-\beta(t)},
\end{align*}
thus  applying  H\"{o}lder's inequality yields,
\begin{align*}
\int_{\mathbb{R}^n} \frac{|k(x-t)f(t)|}{\mu} \,dt&=\int_{\mathbb{R}^n}  \left|\frac{k(x-t)}{\lambda_1}\right|^{\alpha(x-t)}
\left|\frac{f(t)}{\lambda_2}\right|^{\beta(t)}
\left|\frac{k(x-t)}{\lambda_1}\right|^{1-\alpha(x-t)}
\left|\frac{f(t)}{\lambda_2}\right|^{1-\beta(t)}\,dt\\
&\leq \Big( \int_{\mathbb{R}^n}  \left|\frac{k(x-t)}{\lambda_1}\right|^{p\alpha(x-t)}\,dt \Big)^{\frac{1}{p}}
\Big( \int_{\mathbb{R}^n}  \left|\frac{f(t)}{\lambda_2}\right|^{q\beta(t)}\Big)^{\frac{1}{q}}\\
&\hspace{3cm}\times \Big(\int_{\mathbb{R}^n}  \left|\frac{k(x-t)}{\lambda_1}\right|^{r(1-\alpha(x-t))}
\left|\frac{f(t)}{\lambda_2}\right|^{r(1-\beta(t))}\,dt\Big)^{\frac{1}{r}},
\end{align*}
regarding the first two terms on the left-hand side, we observe that,
\begin{align*}
\int_{\mathbb{R}^n}  \left|\frac{k(x-t)}{\lambda_1}\right|^{p\alpha(x-t)}\,dt =\varrho_{A(\cdot)}\left(\frac{k}{\lambda_1}\right) \leq 1 \ \text{ and } \ 
\int_{\mathbb{R}^n}  \left|\frac{f(t)}{\lambda_2}\right|^{q\beta(t)} =\varrho_{B(\cdot)}\left(\frac{f}{\lambda_2}\right)\leq 1.
\end{align*}
Consequently, the following inequality holds for almost every $x \in \mathbb{R}^n$,
\begin{align*}
\int_{\mathbb{R}^n} \frac{|k(x-t)f(t)|}{\mu} \,dt\leq \left(\int_{\mathbb{R}^n}  \left|\frac{k(x-t)}{\lambda_1}\right|^{C(x-t)}
\left|\frac{f(t)}{\lambda_2}\right|^{D(t)}\,dt\right)^{\frac{1}{r}},
\end{align*}
which is equivalent to saying that for all $x \in \mathbb{R}^n$,
\begin{align*}
\left|\frac{k*f(x)}{\mu}\right|^{r}\leq \int_{\mathbb{R}^n}  \left|\frac{k(x-t)}{\lambda_1}\right|^{C(x-t)}
\left|\frac{f(t)}{\lambda_2}\right|^{D(t)}\,dt.
\end{align*}
Now, integrating both sides over $\mathbb{R}^n$ yields the following estimate,  
\begin{align*}
\int_{\mathbb{R}^n}  \left|\frac{k*f(x)}{\mu}\right|^{r}\,dx\leq  
\int_{\mathbb{R}^n}  \left|\frac{f(t)}{\lambda_2}\right|^{D(t)}\,\int_{\mathbb{R}^n}  \left|\frac{k(x-t)}{\lambda_1}\right|^{C(x-t)} \,dx\,dt\leq 1. 
\end{align*}
This leads to the following inequality  $\|k*f\|_{L^r(\mathbb{R}^n)}\leq \lambda_1\lambda_2$, by letting 
\begin{align*}
\lambda_1\to\max(\|k\|_{L^{A(\cdot)}(\mathbb{R}^n)},\|k\|_{ L^{C(\cdot)}(\mathbb{R}^n)})  \ \text{ and  } \
\lambda_2\to\max(\|f\|_{L^{B(\cdot)}(\mathbb{R}^n)},\|f\|_{ L^{D(\cdot)}(\mathbb{R}^n)}),
\end{align*}
 we deduce that 
\begin{align*}
\|k*f\|_{L^r(\mathbb{R}^n)}&\leq \max(\|k\|_{L^{A(\cdot)}(\mathbb{R}^n)},\|k\|_{ L^{C(\cdot)}(\mathbb{R}^n)}) \max(\|f\|_{L^{B(\cdot)}(\mathbb{R}^n)},\|f\|_{ L^{D(\cdot)}(\mathbb{R}^n)})\\
&=
\|k\|_{L^{A(\cdot)}(\mathbb{R}^n)\cap L^{C(\cdot)}(\mathbb{R}^n)}\|f\|_{L^{B(\cdot)}(\mathbb{R}^n)\cap L^{D(\cdot)}(\mathbb{R}^n)}.
\end{align*}
If $r=\infty$ then $\frac{1}{p}+\frac{1}{q}=1, \, p,q\in [1,\infty]$, $ A=p$ and $B=q$, by H\"{o}lder's inequality, 
\begin{equation*}
\|k*f\|_{L^\infty(\mathbb{R}^n)}\leq \|k\|_{L^{p}(\mathbb{R}^n)}\|f\|_{ L^{q}(\mathbb{R}^n)}\leq 
 \|k\|_{L^{p}(\mathbb{R}^n) \cap L^{C(\cdot)}(\mathbb{R}^n) }\|f\|_{L^{q}(\mathbb{R}^n)\cap L^{ D(\cdot)}(\mathbb{R}^n) },
\end{equation*}
if $r=1$ then $p=q=\infty$ and  $C=D=1$, by Young's inequality, $\|k*f\|_{L^1(\mathbb{R}^n)}\leq \|k\|_{L^{1}(\mathbb{R}^n)} \|f\|_{ L^{ 1}(\mathbb{R}^n)} $ $ \leq\|k\|_{ L^{A(\cdot)}(\mathbb{R}^n)\cap L^{1}(\mathbb{R}^n)  }\|f\|_{ L^{ B(\cdot)}(\mathbb{R}^n)\cap L^{1}(\mathbb{R}^n) }$. The proof is complete.

\end{proof}
\begin{remark}We provide the following observations concerning Theorem \ref{THM 1} supplemented by an example to illustrate the applicability of our results.
\begin{enumerate}
\item  Primarily, under the assumption that the exponents fulfill the conditions of Theorem \ref{THM 1}, then $A,B,C,D$ are bounded, also this result coincide with the known Young's inequality, to see this, for real numbers $p,q,r\geq 1  $ such that $ \frac{1}{p}+\frac{1}{q}=1+\frac{1}{r}$ then $ \frac{1}{p'}+\frac{1}{q'}+\frac{1}{r}=1$,
$ \frac{p}{q'}+\frac{p}{r}=1$ and $ \frac{q}{p'}+\frac{q}{r}=1$, thus taking $A=C=p$ and $B=D=q$  by Theorem \ref{THM 1}
\begin{align*}
\|k*f\|_{L^r(\mathbb{R}^n)}\leq \|k\|_{L^{p}(\mathbb{R}^n)}\|f\|_{L^{q}(\mathbb{R}^n)},
\end{align*}
which is Young's inequality. 
\item Let $A,B\in \mathcal{P}(\mathbb{R}^n)$ be bounded  and $p,q,r\in[1,\infty)$
such that  $p>A^+,q>B^+$, $\frac{1}{p}+\frac{1}{q}<1$ and $\frac{1}{p}+\frac{1}{q}+\frac{1}{r}=1$. Define
$C,D\in \mathcal{P}_0(\mathbb{R}^n)$ by  $C(x) = r(1-\frac{A(x)}{p}),D(x) =r( 1-\frac{B(x)}{q})$ then by Theorem \ref{THM 1}
\begin{equation*}
\|k*f\|_{L^r(\mathbb{R}^n)}\leq 
\|k\|_{L^{A(\cdot)}(\mathbb{R}^n)\cap L^{r\big(1-\frac{A(\cdot)}{p}\big)}(\mathbb{R}^n)}\|f\|_{L^{B(\cdot)}(\mathbb{R}^n)\cap L^{r\big( 1-\frac{B(\cdot)}{q}\big)}(\mathbb{R}^n)}.
\end{equation*}
\item If $ p=q=r=3$ and $A(x)=1+|\sin(x)|,C(x)=2-|\sin(x)|,B(x)=1+e^{-|x|},D(x)=2-e^{-|x|}$, then 
\begin{equation*}
\|k*f\|_{L^3(\mathbb{R})}\leq \|k\|_{L^{1+|\sin(x)|}(\mathbb{R}^n)\cap L^{2-|\sin(x)|}(\mathbb{R})}\|f\|_{L^{1+e^{-|x|}(\mathbb{R}^n)}\cap L^{2-e^{-|x|}}(\mathbb{R})},
\end{equation*}
for all  $k\in L^{1+|\sin(x)|}(\mathbb{R})\cap L^{2-|\sin(x)|}(\mathbb{R}) ,f\in L^{1+e^{-|x|}}(\mathbb{R})\cap L^{2-e^{-|x|}}(\mathbb{R})$.
\end{enumerate}
\end{remark}

By virtue of the embedding from \eqref{Embed-Eq}, we have  $L^{r_\infty}(\mathbb{R}^n) \;\hookrightarrow\; L^{r(\cdot)}(\mathbb{R}^n)$ if \(r \in \mathcal{P}^{\log}(\mathbb{R}^n)\)  $r_\infty = r^+$, then we have the following corollary.
\begin{corollary}\label{coro - of conv}
Let \(r \in \mathcal{P}^{\log}(\mathbb{R}^n)\) with  $r_\infty = r^+$, \(A,B,C,D \in \mathcal{P}(\mathbb{R}^n)\), $p,q\in[1,\infty]$ such that $\frac{1}{p}+\frac{1}{q}+\frac{1}{r^+}=1,\frac{A(x)}{p}+\frac{C(x)}{r^+} = 1,\frac{B(x)}{q}+\frac{D(x)}{r^+} = 1$. Then  
\begin{equation*}
\|k*f\|_{L^{r(\cdot)}(\mathbb{R}^n)}\leq c\|k\|_{L^{A(\cdot)}(\mathbb{R}^n)\cap L^{C(\cdot)}(\mathbb{R}^n)}\|f\|_{L^{B(\cdot)}(\mathbb{R}^n)\cap L^{D(\cdot)}(\mathbb{R}^n)}
\end{equation*}
for all  $k\in L^{A(\cdot)}(\mathbb{R}^n)\cap L^{C(\cdot)} (\mathbb{R}^n),f\in L^{B(\cdot)}(\mathbb{R}^n)\cap L^{D(\cdot)}(\mathbb{R}^n)$. 
\end{corollary}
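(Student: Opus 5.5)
The plan is to combine Theorem \ref{THM 1}, applied with the constant exponent $r^+$ in place of $r$, with the embedding \eqref{Embed-Eq}. The hypotheses of the corollary are exactly those of Theorem \ref{THM 1} once the constant $r$ there is taken to be $r^+\in[1,\infty]$. We have $\frac{1}{p}+\frac{1}{q}+\frac{1}{r^+}=1$, $\frac{A(x)}{p}+\frac{C(x)}{r^+}=1$ and $\frac{B(x)}{q}+\frac{D(x)}{r^+}=1$. So for $k\in L^{A(\cdot)}(\mathbb{R}^n)\cap L^{C(\cdot)}(\mathbb{R}^n)$ and $f\in L^{B(\cdot)}(\mathbb{R}^n)\cap L^{D(\cdot)}(\mathbb{R}^n)$, Theorem \ref{THM 1} gives
\begin{equation*}
\|k*f\|_{L^{r^+}(\mathbb{R}^n)}\leq \|k\|_{L^{A(\cdot)}(\mathbb{R}^n)\cap L^{C(\cdot)}(\mathbb{R}^n)}\|f\|_{L^{B(\cdot)}(\mathbb{R}^n)\cap L^{D(\cdot)}(\mathbb{R}^n)}.
\end{equation*}
This also covers the endpoint cases $r^+=\infty$ and $r^+=1$, which are treated at the end of the proof of Theorem \ref{THM 1}.

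Next I pass from $L^{r^+}$ to $L^{r(\cdot)}$. Since $r\in\mathcal{P}^{\log}(\mathbb{R}^n)$ and $r_\infty=r^+$, we have $r\leq r_\infty$ almost everywhere. Lemma \ref{lem2}, applied with $p=r$ and $q\equiv r_\infty$, then gives the embedding \eqref{Embed-Eq}:
\begin{equation*}
\|g\|_{L^{r(\cdot)}(\mathbb{R}^n)}\leq c\,\|g\|_{L^{r_\infty}(\mathbb{R}^n)},
\end{equation*}
with $c$ depending only on $r$. I apply this to $g=k*f$, which lies in $L^{r^+}(\mathbb{R}^n)$ by the first step. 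Chaining the two inequalities gives the claim, with the constant $c$ coming from the embedding.

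The only delicate point is checking that Lemma \ref{lem2} applies. The constant exponent $r_\infty$ must belong to $\mathcal{P}^{\log}(\mathbb{R}^n)$, which holds trivially. The limits at infinity must agree, which is true by construction. When $r^+=\infty$ the embedding is $L^\infty\hookrightarrow L^{r(\cdot)}$, as used in Lemma \ref{LemmaCom 2}, and it holds by the same lemma, so no separate case analysis is needed beyond what Theorem \ref{THM 1} already handles.
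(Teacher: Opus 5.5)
Your proposal is correct and is the paper's argument: apply Theorem~\ref{THM 1} with the constant exponent $r^+$ in place of $r$, then pass to $L^{r(\cdot)}(\mathbb{R}^n)$ via the embedding $L^{r_\infty}(\mathbb{R}^n)\hookrightarrow L^{r(\cdot)}(\mathbb{R}^n)$ from \eqref{Embed-Eq}, which is Lemma~\ref{lem2} with the constant exponent $r_\infty=r^+$. Your checks of the hypotheses of Lemma~\ref{lem2} and of the endpoint $r^+=\infty$ are accurate; note only that $r\le r_\infty$ a.e.\ follows directly from $r\le r^+$ and does not need log-H\"older continuity.
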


We observe that in Theorem~\ref{THM 1} the exponent \(r\) is constant. Most convolution
results have difficulty accommodating a variable exponent \(r(\cdot)\) without
imposing additional assumptions, as illustrated in Corollary~\ref{coro - of conv}.
In the following theorems, we  provide a framework that allows
for variable exponents \(r(\cdot)\).

\begin{theorem}\label{THM1-2} Let $p,q\in(1,\infty)$ and $r,A,B,C,D \in \mathcal{P}_0(\mathbb{R}^n)$ such that $1<r^-<r^+<+\infty$, $\frac{1}{p}+\frac{1}{q}=1$ and $\frac{A(x)}{p(r')^-}+\frac{C(x)}{r^-} = 1,\frac{B(x)}{q(r')^-}+\frac{D(x)}{r^-} = 1$. If 
$k\in L^{A(\cdot)}(\mathbb{R}^n)\cap L^{C(\cdot)}(\mathbb{R}^n)\cap L^\infty(\mathbb{R}^n) $ and $f\in L^{B(\cdot)}(\mathbb{R}^n)\cap L^{D(\cdot)}(\mathbb{R}^n)\cap L^\infty(\mathbb{R}^n)$,  then 
\begin{equation*}
\|k*f\|_{L^{r(\cdot)}(\mathbb{R}^n)}\leq \|k\|_{L^{A(\cdot)}(\mathbb{R}^n)\cap L^{C(\cdot)}(\mathbb{R}^n)\cap L^\infty(\mathbb{R}^n)}\|f\|_{L^{B(\cdot)}(\mathbb{R}^n)\cap L^{D(\cdot)}(\mathbb{R}^n)\cap L^\infty(\mathbb{R}^n)}.
\end{equation*}
\end{theorem}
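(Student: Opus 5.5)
Set $s:=r^-$ and $s':=(r')^-$. Since $r'=r/(r-1)$ is decreasing in $r$, we have $(r')^-=(r^+)'$; the point to check first is that the hypotheses encode the three-term H\"older identity $\frac{1}{p s'}+\frac{1}{q s'}+\frac{1}{s}=\frac{1}{s'}+\frac{1}{s}$. This equals $1$ only if $s'=(r^-)'$, whereas with $s'=(r^+)'$ it exceeds $1$. I will therefore run the proof of Theorem~\ref{THM 1} with the exponent triple $(ps',qs',s)$, for which the structural relations $\frac{A}{ps'}+\frac{C}{s}=1$ and $\frac{B}{qs'}+\frac{D}{s}=1$ are exactly the hypotheses. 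If the H\"older exponents do not sum to $1$ (when $(r')^-\neq (r^-)'$), I would instead use the reciprocal sum $\ge 1$ together with the fact that all normalized quantities are at most $1$ (this is where the $L^\infty$ bounds enter). The target intermediate estimate is the modular bound $\int_{\mathbb{R}^n}|k*f/\mu|^{r^-}\,dx\le 1$ with $\mu=\lambda_1\lambda_2$, where $\lambda_1$ and $\lambda_2$ exceed the intersection norms (now including the $L^\infty$ norms).

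First I factor $\frac{|k(x-t)f(t)|}{\mu}$ as in the proof of Theorem~\ref{THM 1}, writing $A=ps'\alpha$, $B=qs'\beta$, $C=s(1-\alpha)$, $D=s(1-\beta)$. I then apply H\"older in $t$ to get
\[
\frac{|k*f(x)|}{\mu}\le \Big(\int \Big|\frac{k(x-t)}{\lambda_1}\Big|^{C(x-t)}\Big|\frac{f(t)}{\lambda_2}\Big|^{D(t)}dt\Big)^{1/s},
\]
using that the $A$- and $B$-modulars are at most $1$, so their powers are at most $1$. Integrating in $x$ with Fubini, as in Theorem~\ref{THM 1}, gives $\varrho_{r^-}(k*f/\mu)\le 1$.

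Second, I pass from $r^-$ to $r(\cdot)$ using the $L^\infty$ control. Since $\|k/\lambda_1\|_\infty\le 1$ and $\|f/\lambda_2\|_\infty\le 1$, we have $|k*f(x)|/\mu\le \int|k(x-t)/\lambda_1||f(t)/\lambda_2|\,dt$, which by the pointwise H\"older estimate above is at most $1$ for a.e.\ $x$. Hence $|k*f/\mu|^{r(x)}\le |k*f/\mu|^{r^-}$ pointwise because $r(x)\ge r^-$, and so $\varrho_{r(\cdot)}(k*f/\mu)\le\varrho_{r^-}(k*f/\mu)\le 1$. The unit ball property then gives $\|k*f\|_{L^{r(\cdot)}}\le\lambda_1\lambda_2$, and letting $\lambda_i$ decrease to the intersection norms finishes the proof with constant $1$.

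The main obstacle is the pointwise bound $|k*f|/\mu\le 1$ in the second step, which must come out of the H\"older factorization rather than directly from the $L^\infty$ norms. The right-hand side of the pointwise H\"older estimate is at most $\big(\int|k(x-t)/\lambda_1|^{C(x-t)}\,dt\big)^{1/s}$, using $|f/\lambda_2|^{D}\le 1$ from the $L^\infty$ bound on $f$, and this is at most $1$ by the $C$-modular bound. So the $L^\infty$ hypothesis on $f$ is what is used here, and the one on $k$ gives the symmetric version. Exponent bookkeeping, and the case $(r')^-\ne(r^-)'$ flagged at the start, are the places to be careful.
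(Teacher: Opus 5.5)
Your argument is correct once the repair you list as a contingency becomes the actual step. Since $r^-<r^+$ by hypothesis, you always have $(r')^-=(r^+)'<(r^-)'$. So the triple $(p(r')^-,q(r')^-,r^-)$ never satisfies H\"older's condition, and ``running the proof of Theorem~\ref{THM 1}'' with it is not valid as written.

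The concrete fix is to apply H\"older in $t$ with the triple $(p(r^-)',q(r^-)',r^-)$, whose reciprocals sum to $1$. Then use $|k/\lambda_1|\le 1$ and $|f/\lambda_2|\le 1$, together with $p(r^-)'\alpha\ge p(r')^-\alpha=A$ and $q(r^-)'\beta\ge B$. This gives $\int|k(x-t)/\lambda_1|^{p(r^-)'\alpha(x-t)}\,dt\le\varrho_{A(\cdot)}(k/\lambda_1)\le 1$, and likewise for $f$. That is what your remark about normalized quantities being at most $1$ amounts to. With it, your $L^{r^-}$ modular bound, the pointwise bound $|k*f|/\mu\le 1$, and the comparison $|g|^{r(x)}\le|g|^{r^-}$ all go through.

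The paper organizes the same idea differently. For each fixed $x$ it applies H\"older with the $x$-dependent triple $(pr'(x),qr'(x),r(x))$. It then uses the $L^\infty$ bounds to lower every exponent inside the integrals at once: $pr'(x)\alpha\to A$, $qr'(x)\beta\to B$, $r(x)(1-\alpha)\to C$, $r(x)(1-\beta)\to D$. This gives directly $|k*f(x)/\mu|^{r(x)}\le\int|k(x-t)/\lambda_1|^{C(x-t)}|f(t)/\lambda_2|^{D(t)}\,dt$, and Fubini finishes.

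You instead use constant exponents and prove $\|k*f\|_{L^{r^-}}\le\mu$ and $\|k*f\|_{L^\infty}\le\mu$ separately. You then conclude via the elementary fact that $\max\{\|g\|_{L^{r^-}},\|g\|_{L^\infty}\}\le\mu$ implies $\|g\|_{L^{r(\cdot)}}\le\mu$ when $r\ge r^-$. This costs one extra step, but it shows that $r(\cdot)$ enters only through $r^-$ and that the real content is an $L^{r^-}\cap L^\infty$ estimate. The paper's single chain is shorter but hides this. Both proofs use the $L^\infty$ hypotheses in the same essential way: to compare different powers of functions bounded by $1$.
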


\begin{proof} The proof follows an argument similar to that of Theorem~\ref{THM 1}, we therefore omit the details. Define tow functions $\alpha,\beta$ such that $A(x)=p(r')^-\alpha(x)$ and $B(x)=q(r')^-\beta(x)$ then $C(x)=r^-(1-\alpha(x),D(x)=r^-(1-\beta(x))$ for all $x\in\mathbb{R}^n$. Let $\mu:=\lambda_1\lambda_2$ where  $\lambda_1>\max(\|k\|_{L^{A(\cdot)}(\mathbb{R}^n)},\|k\|_{ L^{C(\cdot)}(\mathbb{R}^n)},\|k\|_{ L^{\infty}(\mathbb{R}^n)})$, $\lambda_2>\max(\|f\|_{L^{B(\cdot)}(\mathbb{R}^n)},\|f\|_{ L^{D(\cdot)}(\mathbb{R}^n)},\|f\|_{ L^{\infty}(\mathbb{R}^n)})$. Since $\frac{1}{p}+\frac{1}{q}=1$, it follows that 
$\frac{1}{r(x)}+\frac{1}{pr'(x)}+\frac{1}{qr'(x)}=1$ for every  $x\in\mathbb{R}^n$, then   H\"{o}lder's inequality yields
\begin{align*}
\int_{\mathbb{R}^n} \frac{|k(x-t)f(t)|}{\mu} \,dt &=\int_{\mathbb{R}^n} \left|\frac{k(x-t)}{\lambda_1}\right|^{\alpha(x-t)}
\left|\frac{f(t)}{\lambda_2}\right|^{\beta(t)}
\left|\frac{k(x-t)}{\lambda_1}\right|^{1-\alpha(x-t)}
\left|\frac{f(t)}{\lambda_2}\right|^{1-\beta(t)}\,dt\\
&\leq \left( \int_{\mathbb{R}^n} \left|\frac{k(x-t)}{\lambda_1}\right|^{p(r')^-\alpha(x-t)}\,dt \right)^{\frac{1}{pr'(x)} }
\left( \int_{\mathbb{R}^n} \left|\frac{f(t)}{\lambda_2}\right|^{q(r')^-\beta(t)}\,dt\right)^{\frac{1}{qr'(x)}}\\
&\hspace{3cm}\, \times \,\left(\int_{\mathbb{R}^n} \left|\frac{k(x-t)}{\lambda_1}\right|^{r(x)(1-\alpha(x-t))}
\left|\frac{f(t)}{\lambda_2}\right|^{r(x)(1-\beta(t))}\,dt\right)^{\frac{1}{r(x)}}\\
&\leq \left(\int_{\mathbb{R}^n} \left|\frac{k(x-t)}{\lambda_1}\right|^{C(x-t)}
\left|\frac{f(t)}{\lambda_2}\right|^{D(t)}\,dt\right)^{\frac{1}{r(x)}}.
\end{align*}
for every $x\in\mathbb{R}^n$. Therefore
\begin{align*}
\int_{\mathbb{R}^n} \left|\frac{k*f(x)}{\mu}\right|^{r(x)}\leq \int_{\mathbb{R}^n} \int_{\mathbb{R}^n} \left|\frac{k(x-t)}{\lambda_1}\right|^{C(x-t)}
\left|\frac{f(t)}{\lambda_2}\right|^{D(t)}\,dt\,dx \leq 1.
\end{align*}
By choosing $\lambda_1$ and $\lambda_2$ sufficiently small, the desired result follows, thereby completing the proof.

\end{proof}

\begin{theorem}\label{Prop2}
Let \(r,p \in \mathcal{P}(\mathbb{R}^n)\) and $q$ is the conjugate exponent of $p$, then we have
\begin{enumerate}
\item $ \|k*f\|_{L^{r(\cdot)}(\mathbb{R}^n)}\leq C  \|k\|_{L^{\infty}(\mathbb{R}^n)\cap L^{r^-}(\mathbb{R}^n)} \|f\|_{L^{p(\cdot)}(\mathbb{R}^n)\cap L^{1}(\mathbb{R}^n)} ,$
\item There exists a positive constant $C$ such that for every $\tau>0$ 
$$ \|\eta _{\tau,m}*f\|_{L^{r(\cdot)}(\mathbb{R}^n)}\leq C  \tau^{-n \vartheta(\tau)}  \|f\|_{L^{p(\cdot)}(\mathbb{R}^n)\cap L^{1}(\mathbb{R}^n)} ,$$
where 
\begin{small}
\begin{equation}
\vartheta(\tau)=
\begin{cases}
\frac{r^-}{r^+}(1-\frac{1}{r^-})+ \zeta(\tau)(1-\frac{r^-}{r^+}) , & \text{if }\ 0<\tau\leq 1 \land 1-\frac{1}{r^-} \leq \zeta(\tau) \ \text{ or }\ \tau > 1 \land 1-\frac{1}{r^-} > \zeta(\tau)\ ; \\
1-\frac{1}{r^-} , & \text{if }\ 0<\tau\leq 1 \land 1-\frac{1}{r^-} > \zeta(\tau) \ \text{ or }\ \tau > 1 \land 1-\frac{1}{r^-} \leq  \zeta(\tau),
\end{cases}
\end{equation}
\end{small}
if $r^-<\infty$ and $\vartheta(\tau)=\zeta(\tau)$ if $r^-=\infty$ with $\zeta(\tau)=\frac{1}{p^-}$ if $\tau \in (0,1]$ and $\zeta(\tau)=\frac{1}{p^+}$ if $\tau>1$.
\item For every $1\leq \nu \leq r^-$ there exists a positive constant $C$ such that for every $\tau>0$ 
\begin{align*}
\|\eta _{\tau,m}*f\|_{L^{r(\cdot)}(\mathbb{R}^n)}\leq C  \tau^{-n  \omega(\tau)}  \|f\|_{L^{p(\cdot)}(\mathbb{R}^n)\cap L^{\nu}(\mathbb{R}^n)} ,
\end{align*}
where
\begin{equation}\label{Omega-General}
\omega(\tau)=
\begin{cases} \frac{1}{p^-}(1-\frac{\nu}{r^+}) , & \text{if } 0<\tau\leq 1  \\
\frac{1}{p^+}(1-\frac{\nu}{r^-}) , & \text{if }  \tau > 1 
\end{cases}
\end{equation}
with the convention that $\frac{\nu}{\infty}=0$ for $\nu\in [1,+\infty]$.
\item  If \(1 \in L^{s(\cdot)}(\mathbb{R}^n)\) where the exponent \(s \in \mathcal{P}(\mathbb{R}^n)\)is defined by
\[
\frac{1}{s(x)} := \max \left\{\, \frac{1}{p^-} -\frac{1}{p(x)} ,\, 0 \,\right\}, \qquad \forall x \in \mathbb{R}^n,
\]
  then  
$$ \|k*f\|_{L^{r(\cdot)}(\mathbb{R}^n)}\leq C  \|k\|_{L^{q^+}(\mathbb{R}^n)\cap L^{r^-}(\mathbb{R}^n)} \|f\|_{L^{p(\cdot)}(\mathbb{R}^n)\cap L^{1}(\mathbb{R}^n)}, $$
\end{enumerate}
\end{theorem}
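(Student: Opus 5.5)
The common thread is a pointwise interpolation inside the modular. If $g$ satisfies $\|g\|_{L^\infty}\le a$ and $\|g\|_{L^{s}}\le b$ for some constant $s\le r^-$, then for any $\lambda\ge a$ we have $|g(x)/\lambda|\le 1$ a.e. Hence $|g/\lambda|^{r(x)}\le |g/\lambda|^{s}$, and more precisely
$$\varrho_{r(\cdot)}(g/\lambda)\le \int (a/\lambda)^{r(x)-s}\,|g/\lambda|^{s}\,dx\le (a/\lambda)^{r^- -s}\,(b/\lambda)^{s}.$$
So everything reduces to an $L^\infty$ bound and an $L^{s}$ bound for $k*f$ or $\eta_{\tau,m}*f$, followed by a choice of $\lambda$ making the right-hand side $\le 1$. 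The unit ball property then gives $\|g\|_{L^{r(\cdot)}}\le\lambda$. (When $r(x)=\infty$ on a set, the convention $\omega_\infty(t)=0$ for $t\le1$ makes the same inequality hold.)

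\textbf{Part (1).} Pointwise, $|k*f(x)|\le \|k\|_{L^\infty}\|f\|_{L^1}$. By Young's inequality, $\|k*f\|_{L^{r^-}}\le \|k\|_{L^{r^-}}\|f\|_{L^1}$. Take $s=r^-$ and $\lambda=\|k\|_{L^\infty\cap L^{r^-}}\|f\|_{L^1}$. Then $\lambda$ dominates both bounds, the modular is $\le1$, and (1) follows; the $L^{p(\cdot)}$ norm is only added to the right-hand side.

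\textbf{Part (4).} This goes the same way, except for the $L^\infty$ bound. Lemma~\ref{lem1}, applied with $p^-\le p(\cdot)$ and the assumption $1\in L^{s(\cdot)}$, gives $L^{p(\cdot)}\hookrightarrow L^{p^-}$. Hölder with the constant conjugate pair $(q^+,p^-)$ then gives
$$|k*f(x)|\le \|k\|_{L^{q^+}}\|f\|_{L^{p^-}}\le C\|k\|_{L^{q^+}}\|f\|_{L^{p(\cdot)}}.$$
The $L^{r^-}$ bound is again Young's inequality with $f\in L^1$.

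\textbf{Part (2).} Take $k=\eta_{\tau,m}$.
\begin{itemize}
\item The $L^{r^-}$ bound is $b=c\,\tau^{-n(1-1/r^-)}\|f\|_{L^1}$, by scaling of $\|\eta_{\tau,m}\|_{L^{r^-}}$.
\item The $L^\infty$ bound uses the variable Hölder inequality $|\eta_{\tau,m}*f(x)|\le C\|\eta_{\tau,m}(x-\cdot)\|_{L^{p'(\cdot)}}\|f\|_{L^{p(\cdot)}}$. I estimate the $L^{p'(\cdot)}$ norm exactly as in the proof of Lemma~\ref{LemmaCom 2}: $\eta_{\tau,m}=\tau^{-n(1-1/p'(y))}\eta_{\tau,m}^{1/p'(y)}$, with $1-1/p'=1/p$. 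Testing the modular with $\lambda\approx\tau^{-n\zeta(\tau)}$ gives $a=c\,\tau^{-n\zeta(\tau)}\|f\|_{L^{p(\cdot)}}$, where $\zeta=1/p^-$ for $\tau\le1$ and $\zeta=1/p^+$ for $\tau>1$.
\end{itemize}
Now split into cases by which of $a,b$ is larger in $\tau$-power.
\begin{itemize}
\item If $b\gtrsim a$ (up to the norm of $f$), take $\lambda=\max\{a,b\}$. This gives the exponent $1-1/r^-$.
\item If $a\ge b$, take $\lambda=a^{1-r^-/r^+}b^{r^-/r^+}$, which lies in $[b,a]$. One checks that $(a/\lambda)^{r(x)-r^-}(b/\lambda)^{r^-}$ is maximal at $r(x)=r^+$, where it equals $1$. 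This gives the convex combination $\tfrac{r^-}{r^+}(1-\tfrac1{r^-})+\zeta(1-\tfrac{r^-}{r^+})$.
\end{itemize}
The condition $a\ge b$ translates to $\zeta\ge1-1/r^-$ for $\tau\le1$ and to the reverse inequality for $\tau>1$, which is exactly the case split in the statement. If $r^-=\infty$, then $r\equiv\infty$ and only the $L^\infty$ bound is needed, giving $\vartheta=\zeta$.

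\textbf{Part (3).} Repeat the argument with $s=\nu$. The $L^\nu$ bound is $\|\eta_{\tau,m}*f\|_{L^\nu}\le c\|f\|_{L^\nu}$ (Young, with $\|\eta_{\tau,m}\|_1$ independent of $\tau$), and the $L^\infty$ bound is the same $a$ as in Part (2).
\begin{itemize}
\item For $\tau\le1$ we have $a\gtrsim1$, and I choose $\lambda\approx a^{1-\nu/r^+}$; the worst exponent is $r^+$.
\item For $\tau>1$ we have $a\lesssim1$, and I choose $\lambda\approx a^{1-\nu/r^-}$; the worst exponent is $r^-$.
\end{itemize}
This yields $\omega(\tau)$.

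\textbf{Main obstacle.} I expect the difficulty to lie in Parts (2)--(3). The first point is making the interpolating choice of $\lambda$ compatible with the monotonicity of $t\mapsto t^{r(x)-s}$ across the regimes $a/\lambda\gtrless1$. The second is tracking the $\tau\le1$ versus $\tau>1$ cases correctly so that the stated formula for $\vartheta$ comes out; I would verify both at the extreme exponents $r^\pm$ first.
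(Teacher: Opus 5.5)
Your proposal is correct and follows essentially the paper's route. Both arguments combine an $L^\infty$ bound with an $L^{r^-}$ or $L^{\nu}$ bound from Minkowski/Young, where for $\eta_{\tau,m}$ the $L^\infty$ bound $\tau^{-n\zeta(\tau)}$ comes from variable H\"older and the modular test of Lemma~\ref{LemmaCom 2}. Your direct choice $\lambda=a^{1-r^-/r^+}b^{r^-/r^+}$ or $\max\{a,b\}$ (using $(a/\lambda)^{r^+-s}$ instead of $(a/\lambda)^{r^--s}$ when $\lambda<a$, as you noted) reproduces exactly what the paper gets by normalizing by the $L^\infty$ bound and then rescaling by $\tau^{nr^-(1-\frac{1}{r^-}-\zeta(\tau))/r^{\pm}}$.

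The only difference is in part (4). You embed $f$ via $L^{p(\cdot)}\hookrightarrow L^{p^-}$ and use classical H\"older with $(q^+,p^-)$, whereas the paper embeds the kernel via $L^{q^+}\hookrightarrow L^{q(\cdot)}$. Since $\frac{1}{p^-}-\frac{1}{p(x)}=\frac{1}{q(x)}-\frac{1}{q^+}$, both are the same application of Lemma~\ref{lem1} with the given $s$.
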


\begin{proof}
We assume $r^- < \infty$, the case $r^- = \infty$ implies that $r(\cdot)$ is constant and equal to $\infty$, the proofs in this scenario are considerably simpler, following arguments analogous to the first case, and are therefore omitted. 
 Let   $\mu:=\lambda_1\lambda_2$ where   $\lambda_1$ and $\lambda_2$ will be chosen separately for each assertion. To prove the first assertion, consider $\lambda_1 > \|f\|_{L^{p(\cdot)}(\mathbb{R}^n)\cap L^{1}(\mathbb{R}^n)}$    and $\lambda_2 > \|k\|_{L^{+\infty}(\mathbb{R}^n)\cap L^{r^-}(\mathbb{R}^n)}$, then we have 
\begin{align*}
\int_{\mathbb{R}^n} \frac{|k(x-t)f(t)|}{\mu}\,dt &\leq \frac{ \left\|f(t)\right\|_{L^{1}} }{\lambda_1}\leq 1
\end{align*}
hence, for every $x\in\mathbb{R}^n$
\begin{align*}
 \left|\frac{ k*f(x)}{\mu} \right|^{r(x)}\leq  \ \left( \int_{\mathbb{R}^n} \frac{|f(x-t)k(t)|}{\mu} \,dt\right)^{r^-},
\end{align*}
therefore, by Minkowski inequality
\begin{align*}
\int_{\mathbb{R}^n}  \left|\frac{ k*f(x)}{\mu} \right|^{r(x)}\,dx &\leq \int_{\mathbb{R}^n}  \left( \int_{\mathbb{R}^n} \frac{|k(x-t)f(t)|}{\mu} \,dt\right)^{r^-}\,dx \\
&\leq  \left(\int_{\mathbb{R}^n}  \left(\int_{\mathbb{R}^n}\left| \frac{k(x-t)f(t)}{\mu}\right|^{r^-} \,dx \right)^{1/r^-}\,dt \right)^{r^-}\\
&\leq 1.
\end{align*} 
Therefore, for the first assertion we get $\varrho _{r(\cdot )}\left(\frac{ k*f}{\mu} \right)\leq 1$. For the second assertion of the theorem we first take  
$\beta = \tau^{-n\zeta(\tau)}$ where $\zeta(\tau)=\frac{1}{p^-}$ if $\tau\in (0,1]$, $\zeta(\tau)=\frac{1}{p^+}$ if $\tau>1$ and  $c_m:=1+\|\eta _{\tau,m}\|_{L^{1}(\mathbb{R}^n)}=1+\|\eta _{1,m}\|_{L^{1}(\mathbb{R}^n)}$, then 
\begin{align*}
\varrho _{q(\cdot)}\left(\frac{\eta _{\tau,m}(x-\cdot)}{c_m\beta}   \right)
&= \int_{\mathbb{R}^n} \left|\frac{\eta _{\tau,m}(x-y)}{c_m\beta}  \right|^{q(y)} \,dy\\
&\leq \int_{\mathbb{R}^n} \left(\frac{\tau^{-n\vartheta(\tau)}}{c_m\beta}\right)^{q(y)} \eta _{\tau,m}(x-y)\,dy\\
&\le 1.
\end{align*}
thus, by H\"{o}lder's inequality  there exists $c_1\ge 1$ such that 
\begin{align*}
\int_{\mathbb{R}^n} \frac{|\eta _{\tau,m}(x-t)f(t)|}{c_m\beta\lambda_1}\,dt &\leq c \left\|\frac{\eta _{\tau,m}(x-\cdot)}{c_m\beta}  \right\|_{L^{q(\cdot)}(\mathbb{R}^n)} \left\|\frac{f(\cdot)}{\lambda_1}  \right\|_{L^{p(\cdot)}(\mathbb{R}^n)} \leq c_1,
\end{align*}
As shown above, applying Minkowski's integral inequality again yields
\begin{align}
\varrho _{r(\cdot )}\left(\frac{ \eta _{\tau,m}*f}{C\beta\lambda_1} \right)^{1/r^-} &\leq   \, \frac{\|\eta _{\tau,m}\|_{L^{r^-}(\mathbb{R}^n)}\|f\|_{L^1(\mathbb{R}^n)}}{C\beta\lambda_1} \label{MinK-App-Inequ}\\
&\leq  \, \frac{c \tau^{-n(1-\frac{1}{r^-})}\|f\|_{L^1(\mathbb{R}^n)}}{C\tau^{-n\vartheta(\tau)}\lambda_1} \nonumber\\
&\leq \tau^{-n(1-\frac{1}{r^-}-\zeta(\tau))} \nonumber
\end{align}
for every $\tau >0$ and for some positive constant $C$. Then we have the inequality
\begin{align*}
\tau^{nr^-(1-\frac{1}{r^-}-\zeta(\tau))}\varrho _{r(\cdot )}\left(\frac{ \eta _{\tau,m}*f}{C\mu} \right) \leq 1\ ,
\end{align*}
for every $\tau >0$. Therefore if $0<\tau\leq 1$ and $1-\frac{1}{r^-}-\zeta(\tau)\leq 0$ or $\tau\geq 1$ and $1-\frac{1}{r^-}-\zeta(\tau)\geq 0$ one has 
\begin{align*}
\varrho _{r(\cdot )}\left(\frac{ \tau^{nr^-(1-\frac{1}{r^-}-\zeta(\tau))\frac{1}{r^+}}\eta _{\tau,m}*f}{C\mu} \right) \leq 1.
\end{align*}
And if $0<\tau\leq 1$ and  $1-\frac{1}{r^-}-\zeta(\tau)\geq 0$ or $\tau\geq 1$ and  $1-\frac{1}{r^-}-\zeta(\tau)\leq 0$ we have 
\begin{align*}
\varrho _{r(\cdot )}\left(\frac{ \tau^{nr^-(1-\frac{1}{r^-}-\zeta(\tau))\frac{1}{r^-}}\eta _{\tau,m}*f}{C\mu} \right) \leq 1.
\end{align*}
Now, define 
\[
\lambda_2 := 
\begin{cases}
\tau^{-\frac{nr^-}{r^+}(1-\frac{1}{r^-})-n \zeta(\tau)(1-\frac{r^-}{r^+})} , & \text{if }\ 0<\tau\leq 1 \land 1-\frac{1}{r^-} \leq \zeta(\tau) \ \text{ or }\ \tau > 1 \land 1-\frac{1}{r^-} > \zeta(\tau)\ ; \\
\tau^{-n(1-\frac{1}{r^-})} , & \text{if }\ 0<\tau\leq 1 \land 1-\frac{1}{r^-} > \zeta(\tau) \ \text{ or }\ \tau > 1 \land 1-\frac{1}{r^-} \leq  \zeta(\tau).
\end{cases}
\]
We conclude that  for every $\tau >0$
\begin{align*}
\varrho _{r(\cdot )}\left(\frac{ \eta _{\tau,m}*f}{C\lambda_1\lambda_2} \right) \leq 1
\end{align*}
Similarly, the third assertion is proved by taking $\lambda_1 > \|f\|_{L^{p(\cdot)}(\mathbb{R}^n)\cap L^{\nu}(\mathbb{R}^n)}$ and again    by Minkowski's integral inequality and  replacing $r^-$ with $\nu$ and interchanging the roles of $\eta_{\tau,m}$ and $f$ in (\ref{MinK-App-Inequ}),  we have 
$$
\varrho _{r(\cdot )}\left(\frac{ \eta _{\tau,m}*f}{C\beta\lambda_1} \right)^{1/\nu} \leq   \, \frac{\|\eta _{\tau,m}\|_{L^1(\mathbb{R}^n)}\|f\|_{L^{\nu}(\mathbb{R}^n)}}{C\beta\lambda_1}\leq \tau^{n\zeta(\tau)}.$$
By applying arguments analogous to those used above, we conclude that 
$\varrho _{r(\cdot )}\left(\frac{ \eta _{\tau,m}*f}{C \lambda_1\lambda_2} \right) \leq 1$ for every $\tau >0$ where $\lambda_2:=\tau^{-n  \omega(\tau)}$ where $\omega(\tau)$ is given in (\ref{Omega-General}).   For the fourth assertion,  we may see that  
$\frac{1}{s(x)} = \max \left\{\, \frac{1}{q(x)} -\frac{1}{q^+} ,\, 0 \,\right\}, \ \forall x \in \mathbb{R}^n,$
since \(1 \in L^{s(\cdot)}(\mathbb{R}^n)\) then for arbitrary $\lambda_2 > \|k\|_{L^{q^+}(\mathbb{R}^n)\cap L^{r^-}(\mathbb{R}^n)}$ and  $\lambda_1 > \|f\|_{L^{p(\cdot)}(\mathbb{R}^n)\cap L^{1}(\mathbb{R}^n)}$ it follows from Lemma \ref{lem1} that
\begin{align*}
\left\|\frac{k(x-\cdot)}{\lambda_2}  \right\|_{L^{q(\cdot)}(\mathbb{R}^n)}
\leq C \left\|\frac{k}{\lambda_2}  \right\|_{L^{q^+}(\mathbb{R}^n)}\leq c,
\end{align*}
which implies by similar arguments as above  that  $\varrho _{r(\cdot )}\left(\frac{ k*f}{C\lambda_1\lambda_2} \right)\leq 1$ for some positive constant $C$. In each assertion there exists $C>0$ such that 
\begin{align*}
\left\|k*f  \right\|_{L^{r(\cdot)}(\mathbb{R}^n)}\leq C \lambda_1\lambda_2\ , \ \left\|\eta _{\tau,m}*f  \right\|_{L^{r(\cdot)}(\mathbb{R}^n)}\leq C \lambda_1\lambda_2,
 \end{align*}
where $\lambda_1$ and $\lambda_2$ are presented above for each assertion.  Letting $\lambda_1 \to \|f\|_{L^{p(\cdot)}(\mathbb{R}^n)\cap L^{1}(\mathbb{R}^n)}$, $\lambda_1 \to \|f\|_{L^{p(\cdot)}(\mathbb{R}^n)\cap L^{\nu}(\mathbb{R}^n)}$  and 
$\lambda_2 \to\|k\|_{ L^{\infty}(\mathbb{R}^n)\cap L^{r^-}(\mathbb{R}^n)}$, 
or $\lambda_2 \to \|k\|_{L^{q^+}(\mathbb{R}^n)\cap L^{r^-}(\mathbb{R}^n)}$.   
By combining the results of the individual assertions, we conclude that all claims of the theorem hold. Thus, the proof is complete.

\end{proof}

\begin{lemma}\label{Lemma of LV cap Lp}
Let \(p \in \mathcal{P}(\mathbb{R}^n)\) and $\nu$ a constant exponent with $2\leq  p^-$, $2\leq \nu \leq  2p^-$ and let $m>n$. 
Then  there exists a positive constant $C(m,p)$ such that for every $t>0$ and $u,v\in L^{p(\cdot)}(\mathbb{R}^n)\cap L^{\nu}(\mathbb{R}^n)$,
\begin{align}\label{Omega-Lp-0}
\|\eta_{t,m}*(uv)\|_{L^{p(\cdot)}\cap L^{\nu}(\mathbb{R}^n)}&\leq C \max(t^{-n \omega(t)},t^{-\frac{n}{\nu}}) \|u\|_{L^{p(\cdot)}(\mathbb{R}^n)\cap L^{\nu}(\mathbb{R}^n)}
\|v\|_{L^{p(\cdot)}(\mathbb{R}^n)\cap L^{\nu}(\mathbb{R}^n)}.
\end{align}
where
\begin{equation}\label{Omega-Lp}
\omega(t)=
\begin{cases} \frac{2}{p^-}(1-\frac{\nu}{2p^+}) , & \text{if } 0<t \leq 1  \ ;\\
\frac{2}{p^+}(1-\frac{\nu}{2p^-}) , & \text{if }  t > 1 \ . 
\end{cases}
\end{equation}
Moreover, if \(p \in \mathcal{P}^{\log}(\mathbb{R}^n)\) with $2\leq p^- = p_\infty $ then 
\begin{align}
\|\eta_{t,m}*(uv)\|_{L^{p(\cdot)}}&\leq C t^{-n \phi(t)} \|u\|_{L^{p(\cdot)}(\mathbb{R}^n)}
\|v\|_{L^{p(\cdot)}(\mathbb{R}^n)}.\label{Omega-Lp-Log}
\end{align}
where $\phi $ is defined by
\begin{equation}\label{Omega-Lp2}
\phi(t)=
\begin{cases} \frac{2}{p^-}-\frac{1}{p^+} \ , & \text{if } 0<t \leq 1  \ ;\\
\frac{1}{p^+} \  , & \text{if }  t > 1 \ . 
\end{cases}
\end{equation}
\end{lemma}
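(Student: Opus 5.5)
We reduce both estimates to results already proved, applied to the product $w:=uv$. The first step is to place $w$ in suitable spaces using H\"older's inequality. From $\frac{1}{p(x)/2}=\frac{1}{p(x)}+\frac{1}{p(x)}$ we get $\|uv\|_{L^{p(\cdot)/2}}\le C\|u\|_{L^{p(\cdot)}}\|v\|_{L^{p(\cdot)}}$. From $\frac{2}{\nu}=\frac1\nu+\frac1\nu$ we get $\|uv\|_{L^{\nu/2}}\le\|u\|_{L^\nu}\|v\|_{L^\nu}$. Since $2\le\nu$, we have $\nu/2\ge1$, and since $\nu\le 2p^-$, we have $\nu/2\le p^-$. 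Hence $\frac{p(\cdot)}{2}$ and $\frac{\nu}{2}$ are admissible exponents in $\mathcal{P}(\mathbb{R}^n)$ with $\frac{\nu}{2}\le\left(\frac{p}{2}\right)^-$.

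\emph{$L^{p(\cdot)}$ part of \eqref{Omega-Lp-0}.} We apply assertion (3) of Theorem~\ref{Prop2} with target exponent $r=p$, source exponent $\frac{p}{2}$ (in place of $p$) and constant exponent $\frac{\nu}{2}\in[1,r^-]$ (in place of $\nu$). Its rate is $\omega(\tau)=\frac{1}{(p/2)^-}\bigl(1-\frac{\nu/2}{p^+}\bigr)=\frac{2}{p^-}\bigl(1-\frac{\nu}{2p^+}\bigr)$ for $\tau\le1$, and the analogous expression with $p^\pm$ swapped for $\tau>1$. This is exactly \eqref{Omega-Lp}. Combined with the H\"older bounds, it gives $\|\eta_{t,m}*(uv)\|_{L^{p(\cdot)}}\le Ct^{-n\omega(t)}\|u\|_{L^{p(\cdot)}\cap L^\nu}\|v\|_{L^{p(\cdot)}\cap L^\nu}$.

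\emph{$L^\nu$ part of \eqref{Omega-Lp-0}.} Classical Young's inequality with $1+\frac1\nu=\frac{1}{s}+\frac{2}{\nu}$, i.e.\ $\frac{1}{s}=1-\frac{1}{\nu}$, gives $\|\eta_{t,m}*(uv)\|_{L^\nu}\le\|\eta_{t,m}\|_{L^{s}}\|uv\|_{L^{\nu/2}}$. By scaling, $\|\eta_{t,m}\|_{L^s}=t^{-n(1-1/s)}c(m)=c\,t^{-n/\nu}$, finite because $ms>n$. Taking the maximum of the two bounds yields \eqref{Omega-Lp-0}.

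\emph{The log-H\"older estimate \eqref{Omega-Lp-Log}.} Here the natural tool is Lemma~\ref{LemmaCom 1}, which gives exactly the rate $\phi$ when $p^-=p^+=p_\infty$. I expect the hypothesis $p^-=p_\infty$ to be the main obstacle, because Lemma~\ref{LemmaCom 1} assumes $p^+=p_\infty$ instead. My plan is to rerun its proof in mirrored form: apply Theorem~\ref{Thm-Lr-Conv} with the constant target $r=p_\infty$ and the kernel exponent $q(\cdot)$ defined by $\frac{2}{p(x)}+\frac{1}{q(x)}=1+\frac{1}{p_\infty}$. This gives the rates $1-\frac1{q^+}=\frac{2}{p^-}-\frac1{p^+}$ near $t\le1$ and $\frac1{p^+}$ for $t>1$, after simplifying the $\eta$ exponents in \eqref{eta-label} in the way indicated by Remark~\ref{Remark on mq}. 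To get back from $L^{p_\infty}$ to $L^{p(\cdot)}$, I would first try the embedding of Lemma~\ref{lem2} applied to $\max\{p,p_\infty\}=p$, but that embedding runs the wrong way. So instead I would interpolate, using \eqref{eneq p norm} together with H\"older. If that fails, the fallback is to check whether the stated rate $\phi$ can be obtained by simply bounding the exponents crudely, using the monotonicity of $t^{-a}$ in $a$ separately on $(0,1]$ and on $(1,\infty)$. This transfer step is where I expect the real work, or possibly a needed extra hypothesis, to lie.
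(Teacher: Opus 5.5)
Your proof of \eqref{Omega-Lp-0} is correct and follows the paper's argument. For the $L^{p(\cdot)}$ part you use Theorem~\ref{Prop2}(3) with source exponent $p(\cdot)/2$, target $r=p$ and constant exponent $\nu/2$, together with H\"older. For the $L^\nu$ part you use classical Young plus $\|\eta_{t,m}\|_{L^{\nu'}}\le c\,t^{-n/\nu}$. There is one harmless slip: $\frac{\nu}{2}\le(\frac{p}{2})^-$ would mean $\nu\le p^-$, which is not assumed. What Theorem~\ref{Prop2}(3) actually requires is $1\le\frac{\nu}{2}\le r^-=p^-$, and that is what you then use.

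The gap is \eqref{Omega-Lp-Log}. You give no argument, and the route you sketch does not reach it.

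\begin{itemize}
\item Mirroring Lemma~\ref{LemmaCom 1} with $r=p_\infty=p^-$ puts $\eta_{t,m}*(uv)$ in $L^{p^-}$. Since $p\ge p^-$, there is no embedding $L^{p^-}\hookrightarrow L^{p(\cdot)}$, as you noticed.
\item Theorem~\ref{Thm-Lr-Conv} would also need $\frac{1}{q(x)}=1+\frac{1}{p^-}-\frac{2}{p(x)}\le 1$, i.e.\ $p^+\le 2p^-$, which is not assumed.
\item The rates that route produces are built from $1-\frac{1}{q^+}=\frac{1}{p^-}$ and $1-\frac{1}{q^-}=\frac{2}{p^+}-\frac{1}{p^-}$. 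They are not the ones you wrote; those belong to the case $r=p^+$ of Lemma~\ref{LemmaCom 1}.
\end{itemize}

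The missing idea is to use Lemma~\ref{lem2} on the \emph{input} side. Since $p\in\mathcal{P}^{\log}$ and $p^-=p_\infty$, it gives exactly the embedding you discarded, $L^{p(\cdot)}\hookrightarrow L^{p^-}$. Hence $\|u\|_{L^{p(\cdot)}\cap L^{p^-}}\le C\|u\|_{L^{p(\cdot)}}$, and likewise for $v$.

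Now apply the $L^{p(\cdot)}$ estimate you already proved, with $\nu=p^-$; this is admissible since $2\le p^-\le 2p^-$. It bounds $\eta_{t,m}*(uv)$ directly in $L^{p(\cdot)}$, so no transfer step is needed. With $\nu=p^-$, $\omega$ becomes
\begin{itemize}
\item $\frac{2}{p^-}\bigl(1-\frac{p^-}{2p^+}\bigr)=\frac{2}{p^-}-\frac{1}{p^+}$ for $t\le 1$;
\item $\frac{2}{p^+}\cdot\frac{1}{2}=\frac{1}{p^+}$ for $t>1$.
\end{itemize}
This is precisely $\phi$, and it is how the paper proves \eqref{Omega-Lp-Log}.
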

\begin{proof}
By Theorem~\ref{Prop2}(3) and H\"{o}lder's inequality, for $\frac{p(\cdot)}{2} $   we have 
\begin{align}
\|\eta _{t,m}*(uv)\|_{L^{p(\cdot)}(\mathbb{R}^n)} &\leq C  t^{-n  \omega(t)}  \|uv\|_{L^{\frac{ p(\cdot)}{2}}(\mathbb{R}^n)\cap L^{\frac{\nu}{2}}(\mathbb{R}^n)} ,\nonumber \\
&\leq C t^{-n  \omega(t)}  \|u\|_{L^{ p(\cdot)}(\mathbb{R}^n)\cap L^{\nu}(\mathbb{R}^n)}\|u\|_{L^{ p(\cdot)}(\mathbb{R}^n)\cap L^{\nu}(\mathbb{R}^n)} , \label{Ene-1-Omega-Lp2}
\end{align}
the function $\omega(t)$ is calculated according to (\ref{Omega-General}) and is explicitly given in (\ref{Omega-Lp}). 
Applying Young's inequality  with the relation $1+\frac{1}{\nu}=\frac{1}{\nu'}+\frac{1}{\nu/2}$ and H\"{o}lder's inequality we obtain
\begin{align*}
\|\eta_{t,m}*(uv)\|_{L^{\nu}(\mathbb{R}^n)} &\leq C\  \|\eta_{t,m}\|_{L^{\nu'}(\mathbb{R}^n)} \|\, uv\,\|_{ L^{\nu/2}(\mathbb{R}^n)} \\
& \leq C\ \|\eta_{t,m}\|_{L^{\nu'}(\mathbb{R}^n)} \| u\|_{ L^{\nu}(\mathbb{R}^n)} \| v\|_{ L^{\nu}(\mathbb{R}^n)}\\
&\leq C\  t^{\frac{-n}{\nu}} \|u\|_{L^{p(\cdot)}(\mathbb{R}^n)\cap L^{\nu}(\mathbb{R}^n)}
\|v\|_{L^{p(\cdot)}(\mathbb{R}^n)\cap L^{\nu}(\mathbb{R}^n)}
\end{align*}
for every $t>0$ , where we have used the inequality $\|\eta_{t,m}\|_{L^{\nu'}(\mathbb{R}^n)} \leq c t^{\frac{-n}{\nu}}$. 
If \(p \in \mathcal{P}^{\log}(\mathbb{R}^n)\) with $2\leq p^- = p_\infty $ then by Lemma \ref{lem2} we have  $ L^{ p(\cdot)}(\mathbb{R}^n) \;\hookrightarrow\;  L^{p^-}(\mathbb{R}^n)$, therefore, from inequality (\ref{Ene-1-Omega-Lp2}) by taking $\nu=p^-$ we conclude~(\ref{Omega-Lp-Log}) and $\phi(t)$ is calculated from $\omega(t)$.  Thus, the proof is complete.

\end{proof}
\begin{remark}\qquad
\begin{enumerate}
\item With similar arguments as in the proof of (\ref{Omega-Lp-Log}) we can see that  if \(p \in \mathcal{P}^{\log}(\mathbb{R}^n)\) with $2\leq p^- = p_\infty $ then 
\begin{align}\label{P-p/2Comp-2}
\|\eta_{t,m}*f\|_{L^{p(\cdot)}(\mathbb{R}^n)}
\leq C t^{-n\phi(t)} \|f\|_{L^{ \frac{p(\cdot)}{2}}(\mathbb{R}^n)}
\end{align}
where $\phi$ is given in (\ref{Omega-Lp2}). Consequently, we derive two distinct versions of the previous estimate  correspond to cases $p^- = p_\infty$ and $p^+=p_\infty$, with the latter specifically detailed in Lemmas \ref{LemmaCom 1} and~\ref{LemmaCom 2}.
\item  When the exponents $p=1$ and $r=2$ in  Theorem~\ref{Prop2}(2) or $\nu=p\leq r$ for constant exponents in  Theorem~\ref{Prop2}(3), the results reduces to the standard estimate derived from the classical Young's convolution inequality.
\item The parameters $\omega(t)$ and $\varphi(t)$ satisfy the inequalities $\omega|_{(1,\infty)} \leq \omega|_{(0,1]}$ and $\varphi|_{(1,\infty)} \leq \varphi|_{(0,1]}$, where both reduce to equalities in the case of a constant exponent $p$.
\end{enumerate}
\end{remark}
Next, we  apply Theorem~\ref{Prop2}(2) to derive an estimates similar to (\ref{Omega-Lp-0}) and (\ref{Omega-Lp-Log}) if \(p \in \mathcal{P}^{\log}(\mathbb{R}^n)\) with $2 = p^- = p_\infty $. 
\begin{lemma}\label{Lemma of L2 cap Lp} 
Let \(p \in \mathcal{P}(\mathbb{R}^n)\) with $2\leq p^-$ and $m>n$. 
Then , there exists a positive constant $C(m,p)$ such that for every $t>0$ and $u,v\in L^{p(\cdot)}(\mathbb{R}^n)\cap L^{2}(\mathbb{R}^n)$,
\begin{align*}
\|\eta_{t,m}*(uv)\|_{L^{p(\cdot)}\cap L^{2}(\mathbb{R}^n)}&\leq C \max(t^{-n \varsigma (t)},t^{-\frac{n}{2}}) \|u\|_{L^{p(\cdot)}(\mathbb{R}^n)\cap L^{2}(\mathbb{R}^n)}
\|v\|_{L^{p(\cdot)}(\mathbb{R}^n)\cap L^{2}(\mathbb{R}^n)},
\end{align*}
where
\begin{equation}\label{rho of Lp L2}
\varsigma(t)=
\begin{cases}
\frac{p^-}{p^+}(1-\frac{1}{p^-})+ \zeta(t)(1-\frac{p^-}{p^+}) , & \text{if }\ 0<t \leq 1 \land\ 2\leq  p^- \leq 3 \ \text{ or }\ t > 1 \land 1-\frac{1}{p^-} > \frac{2}{p^+} \ ; \\
1-\frac{1}{p^-} , & \text{if }\ 0<t \leq 1 \land p^- > 3 \ \text{ or }\ t > 1 \land 1-\frac{1}{p^-} \leq  \frac{2}{p^+} .
\end{cases}
\end{equation}
Moreover, if \(p \in \mathcal{P}^{\log}(\mathbb{R}^n)\) with $ p^- = p_\infty =2$ then 
\begin{align*}
\|\eta_{t,m}*(uv)\|_{L^{p(\cdot)}}&\leq C t^{-n K(t)} \|u\|_{L^{p(\cdot)}(\mathbb{R}^n)}
\|v\|_{L^{p(\cdot)}(\mathbb{R}^n)},
\end{align*}
where 
\begin{equation}\label{varphi of Lp L2}
K(t)=
\begin{cases}
\frac{1}{p^+}+ \zeta(t)(1-\frac{2}{p^+}) , & \text{ if }\ 0<t \leq 1 \ \text{ or }\ t > 1 \land  p^+> 4\ ; \\
\ \frac{1}{2} \ , & \text{if } t > 1 \land p^+ \leq  4 .
\end{cases}
\end{equation}
 is calculated by replacing $p^-=2$ in the expression of $\varsigma (t)$ and $\zeta(t)=\frac{2}{p^-}$ if $ 0<t \leq 1$ and $\zeta(t)=\frac{2}{p^+}$ if $t>1$. 
\end{lemma}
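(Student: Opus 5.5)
The plan mirrors the proof of Lemma~\ref{Lemma of LV cap Lp}, with Theorem~\ref{Prop2}(2) in place of Theorem~\ref{Prop2}(3). There are three estimates to prove: the $L^{p(\cdot)}$ part of the norm, the $L^2$ part, and finally the logarithmic case $p^-=p_\infty=2$.

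\emph{The $L^{p(\cdot)}$ part.} Apply Theorem~\ref{Prop2}(2) with output exponent $r(\cdot)=p(\cdot)$ and input exponent $p(\cdot)/2$ (admissible since $p^-\ge 2$), acting on $f=uv$. This gives
\[
\|\eta_{t,m}*(uv)\|_{L^{p(\cdot)}}\le C t^{-n\vartheta(t)}\|uv\|_{L^{p(\cdot)/2}\cap L^1}.
\]
Here $\zeta(t)$ equals $2/p^-$ for $t\le1$ and $2/p^+$ for $t>1$, because the extreme values of $p(\cdot)/2$ are $p^\mp/2$.

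Next we identify the case split in $\vartheta$ with the one in \eqref{rho of Lp L2}. We have $r^\pm=p^\pm$. For $t\le 1$ the condition $1-\frac1{p^-}\le \frac{2}{p^-}$ is equivalent to $p^-\le 3$. For $t>1$ the condition is $1-\frac1{p^-}>\frac2{p^+}$. Substituting these gives exactly $\varsigma(t)$.

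Finally, Hölder's inequality bounds $\|uv\|_{L^{p(\cdot)/2}}$ by $C\|u\|_{L^{p(\cdot)}}\|v\|_{L^{p(\cdot)}}$ and $\|uv\|_{L^1}$ by $\|u\|_{L^2}\|v\|_{L^2}$. Hence $\|uv\|_{L^{p(\cdot)/2}\cap L^1}\le C\|u\|_{L^{p(\cdot)}\cap L^2}\|v\|_{L^{p(\cdot)}\cap L^2}$.

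\emph{The $L^2$ part.} Young's inequality with $1+\frac12=\frac12+1$ gives $\|\eta_{t,m}*(uv)\|_{L^2}\le\|\eta_{t,m}\|_{L^2}\|uv\|_{L^1}$. A direct scaling computation, as in the proof of Lemma~\ref{LemmaCom 1}, gives $\|\eta_{t,m}\|_{L^2}=c\,t^{-n/2}$, valid since $2m>n$. Combining this with the $L^1$ bound from Hölder above and taking the maximum of the two parts yields the first estimate with $\max(t^{-n\varsigma(t)},t^{-n/2})$.

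\emph{The logarithmic case $p^-=p_\infty=2$.} Lemma~\ref{lem2}, applied with $q\equiv p_\infty=2$, gives $L^{p(\cdot)}\hookrightarrow L^{2}$, so the $L^2$ norms of $u$ and $v$ are controlled by their $L^{p(\cdot)}$ norms. Hence $\|uv\|_{L^1}\le C\|u\|_{L^{p(\cdot)}}\|v\|_{L^{p(\cdot)}}$. Feeding this into the $L^{p(\cdot)}$ estimate above gives the claimed bound with exponent $\varsigma$ evaluated at $p^-=2$. With $p^-=2$ we have $1-\frac1{p^-}=\frac12$, and the two cases behave as follows:
\begin{itemize}
\item for $t\le1$ we have $2\le p^-\le3$, so the first branch applies, giving $\frac{2}{p^+}\cdot\frac12+\zeta(t)(1-\frac2{p^+})=\frac1{p^+}+\zeta(t)(1-\frac2{p^+})$;
\item for $t>1$ the dividing condition $\frac12>\frac2{p^+}$ is equivalent to $p^+>4$, otherwise the value is $\frac12$.
\end{itemize}
This recovers $K(t)$ as stated.

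The main obstacle is bookkeeping rather than analysis. One must check that Theorem~\ref{Prop2}(2) really allows the input exponent $p(\cdot)/2$ with the stated $\zeta$; its proof uses $q$, the conjugate exponent of the input exponent. One must also check that the inequalities defining the case split translate exactly into the thresholds $p^-\le 3$ and $p^+>4$, including the boundary cases, where both branches coincide.
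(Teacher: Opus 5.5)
Your proposal is correct and follows the route the paper indicates. The paper introduces this lemma as an application of Theorem~\ref{Prop2}(2), mirroring the proof of Lemma~\ref{Lemma of LV cap Lp}, and your steps match it: Theorem~\ref{Prop2}(2) with output exponent $p(\cdot)$ and input exponent $p(\cdot)/2$, together with H\"{o}lder, for the $L^{p(\cdot)}$ part; Young's inequality with $\|\eta_{t,m}\|_{L^2}=c\,t^{-n/2}$ for the $L^2$ part; and the embedding $L^{p(\cdot)}\hookrightarrow L^2$ from Lemma~\ref{lem2} when $p^-=p_\infty=2$, with the case thresholds $p^-\le 3$ and $p^+>4$ and the formula for $K(t)$ worked out correctly.
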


\begin{remark}\quad \label{Remar - L^2 }
\begin{enumerate}
\item Since $\frac{n}{2} \geq 1$ for $n=2, 3$, the estimates in Lemma~\ref{Lemma of L2 cap Lp} lacks the necessary flexibility, see Remark \ref{Rem 4.4}. Despite this, in some cases the function $\varsigma$  may be finer  than $\omega$ presented in (\ref{Omega-Lp}). For example  $K(t)\geq  \omega(t) $ for every  $t > 1$, which means that $K $ is finer than $\omega$ on $(1;+\infty)$.

\item In all cases, the function $K$ from (\ref{varphi of Lp L2}) satisfies the estimate $K|_{(1,\infty)} \leq K|_{(0,1]}$.
\end{enumerate}
\end{remark}

\bigskip
\section{Applications to Fractional Navier–-Stokes Equations}\label{Sec-Exis}
By Duhamel's formula, the fractional incompressible Navier–-Stokes system \eqref{NS_Intro} can be expressed in its mild (integral) formulation as follows,
\begin{equation}\label{Duham1}
u(t,x)=
e^{-t(-\Delta)^{\alpha}} u_0(x)
+
\int_{0}^t
e^{-(t-s)(-\Delta)^{\alpha}} f(s, x)
ds
-
\int_{0}^{t} e^{-(t-s)(-\Delta)^{\alpha}} \mathbb{P}\operatorname{div}\bigl(u \otimes u\bigr)(s)\,ds,
\end{equation}
where  \( e^{-t(-\Delta)^{\alpha}} \) denotes the semigroup generated by the fractional Laplacian $(-\Delta)^{\alpha}$, and  $\mathbb{P} = I - \nabla(\Delta)^{-1}\operatorname{div}$ is  the Leray projector. Let $g_{\alpha,t}=\mathcal{F}^{-1}\!\bigl[e^{-t|\xi|^{2\alpha}}\bigr]$ be the usual fractional heat kernel and $K_{\alpha,t}$ the kernel of $ e^{-t(-\Delta)^{\alpha}} \mathbb{P}\operatorname{div}(\cdot)$ then we may write Equation~(\ref{Duham1}) as
\begin{equation}\label{NS_Mild}
u(t,x)=
g_{\alpha,t}\ast u_0(x)
+
\int_{0}^t
g_{\alpha,t-s}\ast f(s, x)
ds
-
\int_{0}^t
K_{\alpha,t-s}\ast (u \otimes u)(s, x)
ds.
\end{equation}
We introduce the bilinear integral operator
\begin{equation}\label{def-B}
B(u,u)(x,t) =  -
\int_{0}^t
K_{\alpha,t-s}\ast (u \otimes u)(s, x)
ds,
\end{equation}
so that Equation~(\ref{NS_Mild}) can be rewritten as
\begin{equation*}
u(x,t) = e_0 + B(u,u)(x,t), 
\end{equation*}
with 
\begin{align}
e_0(t,x)= g_{\alpha,t}\ast u_0(x)
+
\int_{0}^t
g_{\alpha,t-s}\ast f(s, x)\ 
ds.
\end{align}

The \(j\)-th component of $B(u,u)$ can be written as
\begin{align}\label{eqB=F}
B_j(u,u) =- \int_{0}^{t} \sum_{h,k =1}^3 (K^\alpha_{t-s})_{j} * (u_h u_k)(s)\,ds
=- \sum_{h,k=1}^3 \int_{0}^{t} K^{j;h,k}_{\alpha,t-s} * (u_h u_k)(s)\,ds.
\end{align}
where $\mathcal{F}( K^{j;h,k}_{\alpha,t})(\xi) = i\xi_k \Bigl( \delta_{jh} - \frac{\xi_j \xi_h}{|\xi|^2} \Bigr) e^{-t|\xi|^{2\alpha}}$.  For a more detailed treatment of these kernels and additional references, we refer the reader to \cite{QW2023}.

We now state several fundamental pointwise estimates for the the kernels $g_{\alpha,t}$ and  $K_{\alpha,t}$. We begin with the following lemma \cite[Lemma 2.1]{Miao2008}.
\begin{lemma} For $\alpha > 0$, the kernel function \( g_\alpha(x)=\mathcal{F}^{-1}\!\bigl[e^{-|\xi|^{2\alpha}}\bigr](x) \) has the following pointwise estimate
\[
|g_{\alpha}(x)| \leq C(1 + |x|)^{-n - 2\alpha}, \, \ x\in \mathbb{R}^n\ .
\]
\end{lemma}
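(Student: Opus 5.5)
This is the classical bound for $g_\alpha=\mathcal F^{-1}[e^{-|\xi|^{2\alpha}}]$. The plan is to treat $|x|\le 1$ and $|x|\ge 1$ separately.

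For $|x|\le 1$ the bound is immediate. Since $e^{-|\xi|^{2\alpha}}\in L^1(\mathbb{R}^n)$, we have $|g_\alpha(x)|\le (2\pi)^{-n/2}\|e^{-|\cdot|^{2\alpha}}\|_{L^1}$, and this constant is comparable to $(1+|x|)^{-n-2\alpha}$ on the unit ball.

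The real work is the decay $|g_\alpha(x)|\le C|x|^{-n-2\alpha}$ for $|x|\ge 1$. The symbol $m(\xi)=e^{-|\xi|^{2\alpha}}$ is smooth away from the origin and decays rapidly at infinity. Near $0$ it behaves like $1-|\xi|^{2\alpha}+O(|\xi|^{4\alpha})$, and for $\alpha\notin\mathbb{N}$ the non-smooth term $|\xi|^{2\alpha}$ has Fourier transform homogeneous of degree $-n-2\alpha$. That term dictates the decay rate.

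I split the symbol with a smooth cutoff $\chi$ equal to $1$ near $0$ and supported in $|\xi|\le 2$:
\[
m=\chi m+(1-\chi)m .
\]
The piece $(1-\chi)m$ is a Schwartz function, so its inverse Fourier transform decays faster than any power.

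For $\chi m$, I fix $x$ with $|x|=R\ge 1$ and decompose dyadically with cutoffs $\psi_j(\xi)=\psi(2^{j}\xi)$ supported in $|\xi|\sim 2^{-j}$, together with a smooth remainder near $0$ at scale $|\xi|\lesssim R^{-1}$. Each piece obeys the symbol estimate $|\partial^\beta(\chi m-1)(\xi)|\lesssim |\xi|^{2\alpha-|\beta|}$ for $|\xi|\le 2$; subtracting the constant $1$ only discards a delta at the origin, which is irrelevant for $x\neq 0$. The estimates for the pieces are:
\begin{itemize}
\item for the low-frequency region $|\xi|\lesssim R^{-1}$, the trivial bound $\int_{|\xi|\lesssim 1/R}|\xi|^{2\alpha}\,d\xi\sim R^{-n-2\alpha}$;
\item for each dyadic piece with $2^{-j}\gtrsim R^{-1}$, integration by parts $N$ times using $e^{ix\cdot\xi}=(iR)^{-2}\,x\cdot\nabla_\xi e^{ix\cdot\xi}\cdots$ with $N>n+2\alpha$, which gives $R^{-N}2^{-j(n+2\alpha-N)}$.
\end{itemize}
Summing the geometric series over $j$ with $2^{-j}\gtrsim R^{-1}$ gives $R^{-n-2\alpha}$. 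When $\alpha\in\mathbb{N}$ the symbol is smooth, the kernel is Schwartz, and the bound is trivial.

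An alternative route for $0<\alpha\le 1$ is subordination: $e^{-|\xi|^{2\alpha}}=\int_0^\infty e^{-s|\xi|^2}\,d\mu_\alpha(s)$ with $\mu_\alpha$ the stable density. This writes $g_\alpha$ as a positive mixture of Gaussians. The density tail $\mu_\alpha(s)\sim s^{-1-\alpha}$ then yields $g_\alpha(x)\lesssim\int s^{-n/2}e^{-|x|^2/4s}s^{-1-\alpha}\,ds\sim |x|^{-n-2\alpha}$. This is cleaner in the range $\alpha\in(\tfrac12,1]$ used in the paper, but I would present the Fourier-analytic proof because it covers all $\alpha>0$.

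The main obstacle is the bookkeeping in the integration by parts: controlling $\partial^\beta$ of $\psi_j\chi m$ uniformly by $2^{j|\beta|}2^{-2\alpha j}$, including derivatives that fall on the cutoff. Once that bound is in hand, summing the dyadic pieces and combining with the $|x|\le1$ case gives $|g_\alpha(x)|\le C(1+|x|)^{-n-2\alpha}$.
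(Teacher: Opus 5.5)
The paper gives no proof of this lemma; it imports it from \cite[Lemma 2.1]{Miao2008}. Your proposal therefore supplies an argument the paper omits. It follows the standard Littlewood--Paley route and is correct, with one point to tighten.

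The constant must be subtracted once, globally, before the dyadic splitting. Every later estimate must then be run on $\chi(m-1)$ rather than on $\chi m$. In your last paragraph you ask for $|\partial^\beta(\psi_j\chi m)|\lesssim 2^{j|\beta|}2^{-2\alpha j}$, and this is false: it fails already for $\beta=0$, and whenever all derivatives fall on $\psi_j$ you only get $2^{j|\beta|}$. With $\chi m$ the pieces would sum only to $O(R^{-n})$.

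The clean way to make your ``delta at the origin'' remark rigorous and pointwise is to write $\chi m=\chi(m-1)+\chi$. The inverse Fourier transform of $\chi$ is Schwartz. The term $\chi(m-1)$ is compactly supported and satisfies $|\partial^\beta(\chi(m-1))(\xi)|\lesssim|\xi|^{2\alpha-|\beta|}$ for all $\xi\neq0$. With that, your low-frequency bound, the $N>n+2\alpha$ integrations by parts, and the geometric sum give $R^{-n-2\alpha}$ exactly as you describe. (Minor slip: $x\cdot\nabla_\xi e^{ix\cdot\xi}=iR^2e^{ix\cdot\xi}$, so the factor is $(iR^2)^{-1}$, not $(iR)^{-2}$.)

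Your subordination alternative is also sound, but only for $0<\alpha\le1$. For $\alpha>1$ the function $e^{-|\xi|^{2\alpha}}$ is not positive definite, so $g_\alpha$ takes negative values and cannot be a mixture of Gaussians. Subordination does cover the range $\alpha\in(\frac12,1]$ actually used in the paper, and it gives $g_\alpha\ge0$ as a by-product. The Fourier-analytic argument is what the lemma requires as stated, for all $\alpha>0$.
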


Now, for all $t>0, x\in\mathbb{R}^n$ we have 
\begin{equation}
g_{\alpha,t}(x)=t^{\frac{-n}{2\alpha}}\,g_\alpha \,\bigl(t^{ \frac{-1}{2\alpha}}x\bigr),\label{scal-g}
\end{equation}
thus, following the notation in \eqref{Not-eta}, it follows that there exists a constant $C > 0$ such that
\begin{align}\label{est-g_eta}
|g_\alpha(x)| \leq C \ \eta _{1,n + 2\alpha}(x)\ \text{ and } \ |g_{\alpha,t}(x)| \leq C \  \eta _{t^{\frac{1}{2\alpha}},n + 2\alpha}(x),
\end{align}
for all $t>0, x\in\mathbb{R}^n$. 
We have similar estimate for  the kernel \( K_{\alpha,t}(x) \), by the estimate (2.9) from \cite{QW2023}, there exists a constant $C > 0$ such that
\begin{equation*}
|K_{\alpha,t}(x)| \leq C\,\bigl(t^{\frac{1}{2\alpha}} + |x|\bigr)^{-n-1},
\end{equation*} 
for all $t>0, x\in\mathbb{R}^n$, following the notation in \eqref{Not-eta}, we have
\begin{equation}\label{eneq-F}
|K_{\alpha,t}(x)| \leq C t^{-\frac{1}{2\alpha}}
 \eta_{t^{\frac{1}{2\alpha}},n+1}(x)\, , \, \forall\ t>0, x\in\mathbb{R}^n.
\end{equation}
Thus, using the above estimate, we can avoid the difficulty arising from the fact that the Riesz transform 
is not continuous on $ L^{\infty}(\mathbb{R}^n) $.

For $\kappa>0$, the \( \kappa\)-th derivative of the kernel $ g_\alpha(x) $ of $ g_{\alpha,t}(x) $ is defined by
\begin{equation*}
g_\alpha^\kappa(x) = (-\Delta)^{\kappa/2} g_\alpha(x), \quad 
g_{\alpha,t}^\kappa(x) = (-\Delta)^{\kappa/2} g_{\alpha, t}(x).
\end{equation*}
By \cite[Lemma 2.2]{Miao2008}, for \( \kappa > 0 \) we have  the following pointwise estimate,
\begin{equation*}
|g_\alpha^\kappa(x)| \leq C(1 + |x|)^{-n - \kappa}=C\,\eta _{1,n + \kappa}(x), \quad x \in \mathbb{R}^n,
\end{equation*}

Thanks to the scaling (\ref{scal-g}), the kernel function \( g_{\alpha,t}^\kappa \) satisfies,
\begin{equation*}
g_{\alpha,t}^\kappa(x)
= t^{-\frac{\kappa}{2\alpha}}\, t^{-\frac{n}{2\alpha}}
  g_\alpha^\kappa(x)\!\left( \frac{x}{t^{\frac{1}{2\alpha}}} \right).
\end{equation*}
Then, for $\kappa >0$ it follows that
\begin{equation}\label{Est-g-al}
\left| g_{\alpha,t}^\kappa(x) \right| \leq C t^{-\frac{\kappa}{2\alpha}}\eta _{t^{\frac{1}{2\alpha}},n + \kappa}(x), \quad \forall x \in \mathbb{R}^n.
\end{equation}
\subsection{Estimates for the Fractional Heat Semigroup} \label{Est - HEAT semigroup}

We first recall the following classical space–-time estimates for the fractional heat semigroup, as established in \cite[Lemma 3.1]{Miao2008}.
\begin{theorem}\label{Lemma - MIAO}
Let $1\leq r \leq p \leq +\infty $. Then, for $\alpha,\kappa >0$,   there exists $C>0$ such that 
\begin{enumerate}
    \item  $\left\|
    g_{\alpha,t }*
    \varphi\right\|_{L^p}
\leq
C t^{-\frac{n}{2 \alpha}\left(\frac{1}{r}-\frac{1}{p}\right)}\|\varphi\|_{L^r}$,
\item $\left\|(-\Delta)^{\kappa / 2} g_{\alpha,t } * 
\varphi\right\|_{L^p} \leq C t^{-\frac{\kappa}{2 \alpha}-\frac{n}{2 \alpha}\left(\frac{1}{r}-\frac{1}{p}\right)}\|\varphi\|_{L^r}$.
\end{enumerate}
\end{theorem}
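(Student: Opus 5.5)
This is the classical constant-exponent estimate, so the plan is to deduce it from Young's inequality applied to the pointwise kernel bounds already recorded, namely \eqref{est-g_eta} and \eqref{Est-g-al}. First I will reduce to a single kernel computation. Define $s\in[1,\infty]$ by
\[
1+\frac1p=\frac1s+\frac1r .
\]
This is admissible because $r\le p$ forces $\frac1s=1-\left(\frac1r-\frac1p\right)\in[0,1]$. Young's inequality then gives
\[
\|g_{\alpha,t}*\varphi\|_{L^p}\le \|g_{\alpha,t}\|_{L^s}\,\|\varphi\|_{L^r},
\]
and in the same way with $g^\kappa_{\alpha,t}$ in place of $g_{\alpha,t}$. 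So both assertions reduce to bounding the $L^s$ norm of the kernel.

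Next I will compute $\|\eta_{\tau,m}\|_{L^s}$ by scaling. Since $\eta_{\tau,m}(x)=\tau^{-n}\eta_{1,m}(x/\tau)$, the change of variables $x=\tau y$ gives
\[
\|\eta_{\tau,m}\|_{L^s}=\tau^{-n(1-\frac1s)}\|\eta_{1,m}\|_{L^s}.
\]
This is finite whenever $m>n$: for $s<\infty$ it is finite because $ms>n$, and for $s=\infty$ trivially.
- For (1), take $\tau=t^{1/(2\alpha)}$ and $m=n+2\alpha>n$. Combined with \eqref{est-g_eta} this gives $\|g_{\alpha,t}\|_{L^s}\le C t^{-\frac{n}{2\alpha}(1-\frac1s)}$, and $1-\frac1s=\frac1r-\frac1p$.
- For (2), use \eqref{Est-g-al} with $m=n+\kappa>n$. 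The extra factor $t^{-\kappa/(2\alpha)}$ appears in front, which yields the stated exponent.

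The one point needing care is making sure the kernel is integrable to the right power, and in particular that $s=1$ is allowed. This holds because $m>n$ in both cases ($\alpha>0$, $\kappa>0$), so $\eta_{1,m}\in L^1\cap L^\infty$ and therefore lies in every $L^s$. The cases $p=\infty$ or $r=1$ need no separate treatment, since Young's inequality covers them directly. Alternatively, the statement could simply be cited from \cite[Lemma 3.1]{Miao2008}.
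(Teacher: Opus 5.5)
Your argument is correct. Young's inequality with $1+\frac1p=\frac1s+\frac1r$, combined with \eqref{est-g_eta}, \eqref{Est-g-al} and the scaling $\|\eta_{\tau,m}\|_{L^s}=\tau^{-n(1-\frac1s)}\|\eta_{1,m}\|_{L^s}$ for $m>n$, gives both estimates.

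The paper does not prove this statement; it simply cites \cite[Lemma 3.1]{Miao2008}. Your Young-plus-kernel-scaling computation is the standard argument behind that citation, and it is also the template the paper follows for its variable-exponent analogues in Theorems~\ref{Prop - 1 - app}--\ref{Prop - 3 - app}.
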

As an initial application of the estimates derived in Section \ref{sec-conv}, we now establish the following generalization theorems for variable Lebesgue spaces.
\begin{theorem}\label{Prop - 1 - app} 
Let $p\in \mathcal{P}^{\log}(\mathbb{R}^n)$ and $r\in \mathcal{P}(\mathbb{R}^n)$ such that the exponents satisfy either $r^+\leq p^+=p_\infty<\infty$  or $p^+=\infty$
Then 
\begin{enumerate}
    \item  $\left\|
    g_{\alpha,t }*
    \varphi\right\|_{L^{p(\cdot)}(\mathbb{R}^n)}
\leq
C t^{\frac{-n\sigma(t)}{2 \alpha}}\|\varphi\|_{L^{r(\cdot)}(\mathbb{R}^n)}$,
\item $\left\|(-\Delta)^{\kappa / 2} g_{\alpha,t } * 
\varphi\right\|_{L^{p(\cdot)}(\mathbb{R}^n)} \leq C t^{-\frac{\kappa}{2 \alpha}-\frac{n\sigma(t)}{2 \alpha}}\|\varphi\|_{L^{r(\cdot)}(\mathbb{R}^n)}$,
\end{enumerate}
where 
\begin{equation}\label{def-of-sig(t)}
\sigma(t) =
\begin{cases}
 \left( \frac{1}{r^-} - \frac{1}{p_\infty} \right) \left( 1 +\frac{1}{p_\infty}  \frac{\frac{1}{r^-}-\frac{1}{r^+}}{\left( 1+\frac{1}{p_\infty}-\frac{1}{r^-} \right) \left( 1+\frac{1}{p_\infty}-\frac{1}{r^+} \right)} \right) , & \text{if } 0<t \le 1,\\
\left( \frac{1}{r^+} - \frac{1}{p_\infty} \right)\left(  1 - \frac{1}{p_\infty}\frac{\frac{1}{r^-}-\frac{1}{r^+}}{\left( 1+\frac{1}{p_\infty}-\frac{1}{r^-} \right) \left( 1+\frac{1}{p_\infty}-\frac{1}{r^+} \right)} \right) , & \text{if } t > 1.
\end{cases}
\end{equation}
\end{theorem}
\begin{proof} Define $q\in \mathcal{P}(\mathbb{R}^n)$ by the relation
\begin{equation*}
\frac{1}{r(x)}+\frac{1}{q(x)} = 1 + \frac{1}{p_\infty} .
\end{equation*}
Using inequality \eqref{est-g_eta} and proceeding with calculations analogous to those in Lemma \ref{LemmaCom 1} for the bounded case $p^+ < \infty$, we have
\begin{align*}
\left\| g_{\alpha,t }*\varphi\right\|_{L^{p_\infty}}
 \leq C \left\|\,  \eta _{t^{\frac{1}{2\alpha}},n + 2\alpha} *|\,   \varphi \, | \right\|_{L^{p_\infty}}
 \leq C t^{-\frac{n\sigma(t)}{2 \alpha}}\|\varphi\|_{L^{r(\cdot)}}
\end{align*}
Since \(p \in \mathcal{P}^{\log}(\mathbb{R}^n)\) and  $p^+=p_\infty$ by Lemma \ref{lem2} we have  $L^{p_\infty}(\mathbb{R}^n) 
\;\hookrightarrow\; L^{p(\cdot)}(\mathbb{R}^n)$. Having established the result for $p^+ < \infty$, we now address the endpoint case $p^+ = \infty$. By Lemma \ref{lem2}, we have the continuous embedding $L^{\infty} \hookrightarrow L^{p(\cdot)}$. Following arguments similar to those in Lemma \ref{LemmaCom 2}, the result follows immediately.
  The second part is obtained similarly, utilizing the estimate \eqref{Est-g-al}. To determine the value of $\sigma(t)$, we consider the following relationships,
\begin{align*}
\frac{1}{q^-} =1+\frac{1}{p_\infty}-\frac{1}{r^+} \  , \  \frac{1}{q^+}=1+\frac{1}{p_\infty}-\frac{1}{r^-}\  \text{ and } \
q^+-q^- = \frac{\frac{1}{r^-}-\frac{1}{r^+}}{\left( 1+\frac{1}{p_\infty}-\frac{1}{r^-} \right) \left( 1+\frac{1}{p_\infty}-\frac{1}{r^+} \right)}\ \cdot
\end{align*}
 Furthermore, the expression for $\sigma(t)$ presented in (\ref{def-of-sig(t)}) remains well-defined in the endpoint cases where $p^+ = \infty$ or $r^+=\infty$ under the standard convention that $\frac{1}{\infty} = 0$. Thus, the proof is complete.
 
\end{proof}

By reasoning analogous to the proof of Theorem~\ref{Prop - 1 - app} and applying Theorem~\ref{Prop2}(2), we obtain the following theorems.
\begin{theorem}\label{Prop - 2 - app} 
Let  $p,r\in \mathcal{P}(\mathbb{R}^n)$ and $1\leq \nu \leq p^-$, then 
\begin{enumerate}
    \item  $\left\|
    g_{\alpha,t }*
    \varphi\right\|_{L^{p(\cdot)}(\mathbb{R}^n)}
\leq
C  t^{\frac{-n  \omega(t)}{2\alpha} }\|\varphi\|_{L^{r(\cdot)}(\mathbb{R}^n)\cap L^{\nu}(\mathbb{R}^n)}$,
\item $\left\|(-\Delta)^{\kappa / 2} g_{\alpha,t } * 
\varphi\right\|_{L^{p(\cdot)}(\mathbb{R}^n)} \leq C t^{-\frac{\kappa}{2 \alpha}-\frac{n  \omega(t)}{2\alpha}}\|\varphi\|_{L^{r(\cdot)}(\mathbb{R}^n)\cap L^{\nu}(\mathbb{R}^n)}$,
\end{enumerate}
where
\begin{equation*}
\omega(t)=
\begin{cases} \frac{1}{r^-}(1-\frac{\nu}{p^+}) , & \text{if }\ 0<t \leq 1 \ , \\
\frac{1}{r^+}(1-\frac{\nu}{p^-}) , & \text{if }\  t > 1 \ .
\end{cases}
\end{equation*}

\end{theorem}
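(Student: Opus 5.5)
The plan is to reduce both assertions to the convolution estimate of Theorem~\ref{Prop2}(3), with the roles of the exponents relabelled. The output exponent there becomes our $p(\cdot)$ and the input exponent becomes our $r(\cdot)$, with the same constant $\nu$. The hypothesis $1\le\nu\le p^-$ is exactly the admissibility condition $1\le\nu\le r^-$ of Theorem~\ref{Prop2}(3) after this relabelling. The formula \eqref{Omega-General} then gives
\[
\omega(\tau)=\tfrac{1}{r^-}\bigl(1-\tfrac{\nu}{p^+}\bigr)\ (0<\tau\le1),\qquad \omega(\tau)=\tfrac{1}{r^+}\bigl(1-\tfrac{\nu}{p^-}\bigr)\ (\tau>1),
\]
which is the $\omega$ in the statement, evaluated at the scale $\tau=t^{1/(2\alpha)}$.

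\textbf{Assertion (1).} By the pointwise bound \eqref{est-g_eta} we have $|g_{\alpha,t}*\varphi|\le C\,\eta_{t^{1/(2\alpha)},n+2\alpha}*|\varphi|$. The lattice property of the Luxemburg norm then gives
\[
\|g_{\alpha,t}*\varphi\|_{L^{p(\cdot)}}\le C\,\|\eta_{\tau,m}*|\varphi|\|_{L^{p(\cdot)}},\qquad m=n+2\alpha>n .
\]
Applying Theorem~\ref{Prop2}(3) to $|\varphi|$, whose $L^{r(\cdot)}\cap L^\nu$ norm equals that of $\varphi$, we obtain the bound $C\tau^{-n\omega(\tau)}\|\varphi\|_{L^{r(\cdot)}\cap L^\nu}$.

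Two bookkeeping points remain. First, $\tau\le1$ if and only if $t\le1$, so the case split in $\omega$ is unchanged. Second, $\tau^{-n\omega(\tau)}=t^{-n\omega/(2\alpha)}$, which gives the stated exponent.

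\textbf{Assertion (2).} We proceed identically, using \eqref{Est-g-al} in place of \eqref{est-g_eta}:
\[
|g^\kappa_{\alpha,t}*\varphi|\le Ct^{-\kappa/(2\alpha)}\,\eta_{\tau,n+\kappa}*|\varphi| .
\]
When $\kappa>0$ we have $m=n+\kappa>n$, so Theorem~\ref{Prop2}(3) applies again and produces the extra factor $t^{-\kappa/(2\alpha)}$.

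\textbf{Main obstacle.} The only delicate point is checking that the constant in Theorem~\ref{Prop2}(3) is independent of $\tau$. This requires retracing its proof, in which the only $\tau$-dependence appears through two quantities: $\|\eta_{\tau,m}\|_{L^1}=\|\eta_{1,m}\|_{L^1}$, which does not depend on $\tau$, and the H\"older bound for $\eta_{\tau,m}(x-\cdot)$ in $L^{r'(\cdot)}$, which is uniform in $x$.

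That argument controls the $L^{r(\cdot)}$ part of the norm, via the $\zeta$ factor, and the $L^\nu$ part, via Minkowski's inequality with $\|\eta\|_{L^1}$, through the modular of the output $L^{p(\cdot)}$. It is this modular comparison that uses $\nu\le p^-$. I would therefore verify that the exponent bookkeeping there, namely interpolating between $p^-$ and $p^+$ in the modular, indeed yields $\frac1{r^\mp}(1-\nu/p^\pm)$. If the interchange of roles in the Minkowski step causes trouble, the fallback is to bound the $L^\nu$ part directly using constant-exponent Young's inequality together with $\|\eta_{\tau,m}\|_{L^1}=c$.
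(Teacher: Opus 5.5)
Your proposal is correct and follows the paper's route. The paper also dominates the kernels pointwise via \eqref{est-g_eta} and \eqref{Est-g-al} by $\eta_{t^{1/(2\alpha)},m}$ and then applies the $\eta$-convolution estimate of Theorem~\ref{Prop2} with the exponents relabelled, just as in the proof of the preceding theorem. The paper's text cites Theorem~\ref{Prop2}(2), but it is assertion (3), the one you used, that yields the stated $\omega(t)$; its constant is already stated to be uniform in $\tau>0$, so your ``main obstacle'' needs no re-verification.
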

\begin{theorem}\label{Prop - 3 - app} 
Let  $p\in \mathcal{P}(\mathbb{R}^n)$ and $r\in \mathcal{P}^{\log}(\mathbb{R}^n)$  such that  $r_\infty=r^- \leq p^-$, then 
\begin{enumerate}
    \item  $\left\|
    g_{\alpha,t }*
    \varphi\right\|_{L^{p(\cdot)}(\mathbb{R}^n)}
\leq
C  t^{\frac{-n  \psi(t)}{2\alpha} }\|\varphi\|_{L^{r(\cdot)}(\mathbb{R}^n)}$,
\item $\left\|(-\Delta)^{\kappa / 2} g_{\alpha,t } * 
\varphi\right\|_{L^{p(\cdot)}(\mathbb{R}^n)} \leq C t^{-\frac{\kappa}{2 \alpha}-\frac{n  \psi(t)}{2\alpha}}\|\varphi\|_{L^{r(\cdot)}(\mathbb{R}^n)}$,
\end{enumerate}
where
\begin{equation*}
\psi(t)=
\begin{cases} \frac{1}{r^-}(1-\frac{r^-}{p^+}) , & \text{if } 0<t \leq 1 \ ; \\
\frac{1}{r^+}(1-\frac{r^-}{p^-}) , & \text{if }  t > 1 .
\end{cases}
\end{equation*}
\end{theorem}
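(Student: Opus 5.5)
The plan is to reduce Theorem~\ref{Prop - 3 - app} directly to Theorem~\ref{Prop - 2 - app} with the choice $\nu=r^-$. The hypothesis $r_\infty=r^-$ lets us absorb the extra $L^{\nu}$-norm appearing there into the $L^{r(\cdot)}$-norm.

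\emph{Step 1 (embedding).} We apply Lemma~\ref{lem2} to the pair $r(\cdot)$ and the constant exponent $r^-$. A constant exponent trivially belongs to $\mathcal{P}^{\log}(\mathbb{R}^n)$, and its limit at infinity is $r^-=r_\infty$, so the condition $p_\infty=q_\infty$ of Lemma~\ref{lem2} holds. Since $\min\{r(\cdot),r^-\}=r^-$ a.e., Lemma~\ref{lem2} gives
\[
L^{r(\cdot)}(\mathbb{R}^n)\hookrightarrow L^{r^-}(\mathbb{R}^n),\qquad \|\varphi\|_{L^{r^-}(\mathbb{R}^n)}\le C\|\varphi\|_{L^{r(\cdot)}(\mathbb{R}^n)}.
\]
Recall that the intersection norm is the maximum of the two norms. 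Hence
\[
\|\varphi\|_{L^{r(\cdot)}(\mathbb{R}^n)\cap L^{r^-}(\mathbb{R}^n)}\le \max\{1,C\}\,\|\varphi\|_{L^{r(\cdot)}(\mathbb{R}^n)}.
\]
If $r^-=\infty$, then $r\equiv\infty$ and this step is trivial.

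\emph{Step 2 (apply Theorem~\ref{Prop - 2 - app}).} The assumption $r^-\le p^-$ means that $\nu:=r^-$ satisfies $1\le\nu\le p^-$. This is exactly the admissible range in Theorem~\ref{Prop - 2 - app}, which we apply with the same $p(\cdot)$ and $r(\cdot)$. Both parts give bounds with the factor $t^{-n\omega(t)/(2\alpha)}$, respectively $t^{-\kappa/(2\alpha)-n\omega(t)/(2\alpha)}$, times $\|\varphi\|_{L^{r(\cdot)}\cap L^{r^-}}$. Substituting $\nu=r^-$ into the formula for $\omega$ gives
\[
\omega(t)=\tfrac{1}{r^-}\bigl(1-\tfrac{r^-}{p^+}\bigr)\ \text{ for } 0<t\le1,\qquad
\omega(t)=\tfrac{1}{r^+}\bigl(1-\tfrac{r^-}{p^-}\bigr)\ \text{ for } t>1.
\]
These are precisely the two branches of $\psi(t)$. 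Combining with Step~1 yields both assertions (1) and (2), with a constant depending on $r$, $p$, $n$, $\alpha$ (and on $\kappa$ in part (2)). Underneath, part (2) rests on the pointwise bound \eqref{Est-g-al}, exactly as in the proof of Theorem~\ref{Prop - 1 - app}.

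\emph{Main obstacle.} The only delicate point is the embedding in Step~1, which points in the direction opposite to the usual one: a larger exponent embedding into a smaller one on all of $\mathbb{R}^n$. This works only because $r_\infty=r^-$, so that $1/s=1/r^--1/r(\cdot)$ decays at infinity in the log-H\"older sense and $1\in L^{s(\cdot)}$. I would therefore check carefully that the hypotheses of Lemma~\ref{lem2} (log-H\"older regularity of $1/r$ and equality of the limits at infinity) are genuinely met. No log-H\"older assumption on $p$ is needed, since Theorem~\ref{Prop - 2 - app} imposes none.
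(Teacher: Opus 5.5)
Your proposal is correct and is essentially the argument the paper intends. The paper only sketches this step, but \eqref{Omega-Lp-Log} follows the same pattern: take Theorem~\ref{Prop - 2 - app}, which is the rescaled $L^{r(\cdot)}\cap L^{\nu}$ estimate of Theorem~\ref{Prop2}(3), set $\nu=r^-$, and use $r\in\mathcal{P}^{\log}$ with $r_\infty=r^-$ in Lemma~\ref{lem2} to absorb the $L^{r^-}$-norm via $L^{r(\cdot)}\hookrightarrow L^{r^-}$, so that $\omega$ becomes $\psi$.
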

\begin{remark}
When $p^+ = p_\infty$ or $r^- = r_\infty$ with globally $\log$-H\"{o}lder continuous exponents, Theorems \ref{Prop - 1 - app} and \ref{Prop - 3 - app} provide a direct generalization of Lemma \ref{Lemma - MIAO}. In contrast, Theorem \ref{Prop - 2 - app} establishes a generalization in the intersection space $L^{r(\cdot)}(\mathbb{R}^n) \cap L^{\nu}(\mathbb{R}^n)$ without requiring such conditions. Notably, if the exponents $p, r,$ and $\nu$ are constants with $\nu = p$, then $\sigma(t) = \omega(t)=\psi(t) =  (\frac{1}{r} - \frac{1}{p})$. Consequently, Theorems \ref{Prop - 1 - app}, \ref{Prop - 2 - app} and \ref{Prop - 3 - app}  recover the classical results of Theorem~\ref{Lemma - MIAO} with the same optimal constants $  (\frac{1}{r} - \frac{1}{p})$.
\end{remark}

We conclude this section with the following estimates, which follow directly from the preceding proofs.
\begin{theorem} \label{THm - eta Lr-Lp conv} Let $m>n$, we have the following inequalities 
\begin{enumerate}
\item If $p\in \mathcal{P}^{\log}(\mathbb{R}^n)$ and $r\in \mathcal{P}(\mathbb{R}^n)$ such that  $r^+\leq p^+=p_\infty$, then 
\begin{align*}
\|\eta_{t,m}*f\|_{L^{p(\cdot)}(\mathbb{R}^n)}
\leq C t^{-n\sigma(t)} \|f\|_{L^{ r(\cdot)}(\mathbb{R}^n)}\ , \forall t\in (0,+\infty).
\end{align*}
\item if  $p\in \mathcal{P}(\mathbb{R}^n)$ and $r\in \mathcal{P}^{\log}(\mathbb{R}^n)$  such that  $r_\infty=r^- \leq p^-$ then 
\begin{align*}
\left\|
   \eta_{t,m}*f \right\|_{L^{p(\cdot)}(\mathbb{R}^n)}
\leq
C  t^{-n  \psi (t) }\|f \|_{L^{r(\cdot)}(\mathbb{R}^n)} \ , \forall t\in (0,+\infty).
\end{align*}
\end{enumerate} 
Where $\sigma(t), \omega (t), \psi (t)$ are given above in Theorems \ref{Prop - 1 - app} and \ref{Prop - 3 - app}, respectively.
\end{theorem}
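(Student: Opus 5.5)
The statement isolates the kernel-level estimates that underlie Theorems \ref{Prop - 1 - app} and \ref{Prop - 3 - app}. The plan is to rerun those proofs with the kernel $\eta_{t,m}$ in place of $\eta_{t^{1/(2\alpha)},n+2\alpha}$. The only change is the scaling parameter: $t^{1/(2\alpha)}$ becomes $t$, so the factor $t^{-n\sigma(t)/(2\alpha)}$ becomes $t^{-n\sigma(t)}$, and likewise for $\psi$. No property of the fractional heat kernel is used beyond the pointwise bound \eqref{est-g_eta}, so the argument is purely about $\eta_{t,m}$ with $m>n$.

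For assertion (1), define $q\in\mathcal{P}(\mathbb{R}^n)$ by $\frac{1}{r(x)}+\frac{1}{q(x)}=1+\frac{1}{p_\infty}$. This is admissible because $r^+\le p_\infty$, which gives $q\ge 1$. Then:
\begin{itemize}
\item Apply Theorem \ref{Thm-Lr-Conv} with the constant target exponent $p_\infty$. This gives
\[
\|\eta_{t,m}*f\|_{L^{p_\infty}}\le c\,\bigl(\|\eta_{t,m}\|_{L^{q^-}}+\|\eta_{t,m}\|_{L^{q^+}}\bigr)^{\eta}\,\|f\|_{L^{r(\cdot)}},
\]
with $\eta$ as in \eqref{eta-label}.
\item Use the scaling identity $\|\eta_{t,m}\|_{L^{s}}=t^{-n(1-1/s)}\|\eta_{1,ms}\|_{L^1}^{1/s}$, which is finite since $ms>n$.
\item Bound the bracket by its dominant power of $t$. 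For $0<t\le1$ this is $t^{-n(1-1/q^+)}$; for $t>1$ it is $t^{-n(1-1/q^-)}$. In each regime, select the matching exponent $\eta$ exactly as in the proof of Lemma \ref{LemmaCom 1}.
\item Compute $1-\frac1{q^\pm}=\frac1{r^\mp}-\frac1{p_\infty}$ and $q^+-q^-$. This reproduces \eqref{def-of-sig(t)}.
\item Conclude with the embedding $L^{p_\infty}\hookrightarrow L^{p(\cdot)}$ from \eqref{Embed-Eq}, which holds since $p\in\mathcal{P}^{\log}$ and $p^+=p_\infty$.
\end{itemize}
If $p_\infty=\infty$, replace the first step by the H\"older/modular argument of Lemma \ref{LemmaCom 2}. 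One bounds $\sup_x\|\eta_{t,m}(x-\cdot)\|_{L^{r'(\cdot)}}$ through the modular: on each regime of $t$ one uses the pointwise bound $\eta_{t,m}\le t^{-n}$ together with $\|\eta_{t,m}\|_{L^1}=c(m)$.

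For assertion (2), use $r\in\mathcal{P}^{\log}$ with $r_\infty=r^-$. By Lemma \ref{lem2} this gives $L^{r(\cdot)}\hookrightarrow L^{r^-}$, so $\|f\|_{L^{r(\cdot)}\cap L^{r^-}}\le C\|f\|_{L^{r(\cdot)}}$. Then apply Theorem \ref{Prop2}(3) with $\nu=r^-$, target exponent $p(\cdot)$ and source exponent $r(\cdot)$. This is admissible because $\nu=r^-\le p^-$. The resulting exponent $\omega$ from \eqref{Omega-General} is built with the source exponent $r$ in the role of "$p$" there, namely $\frac{1}{r^-}(1-\frac{r^-}{p^+})$ for $t\le1$ and $\frac1{r^+}(1-\frac{r^-}{p^-})$ for $t>1$. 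This is exactly $\psi(t)$.

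The main obstacle I expect is bookkeeping rather than new analysis. In (1), the correct pairing of $\eta$ (the cases $A\le1$ or $A>1$) with the regimes $t\le1$ and $t>1$ must be checked, since $A$ depends on $t$ through the $L^{q^\pm}$ norms. The $t$-independent constants must also be absorbed into $M$ uniformly in $t$. The remedy is to enlarge $A$ to $M\,t^{-n(1-1/q^\pm)}$ and to treat the transition region near $t=1$ by a constant.
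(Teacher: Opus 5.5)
Your proposal is correct and follows essentially the paper's own route. Part (1) is the proof of Theorem \ref{Prop - 1 - app} rerun with scale $t$ in place of $t^{1/(2\alpha)}$: Theorem \ref{Thm-Lr-Conv} with constant target $p_\infty$, the scaling of $\|\eta_{t,m}\|_{L^{q^\pm}}$ and the embedding \eqref{Embed-Eq}, or the H\"older/modular argument of Lemma \ref{LemmaCom 2} when $p_\infty=\infty$; part (2) is Theorem \ref{Prop2}(3) with $\nu=r^-$ combined with $L^{r(\cdot)}\hookrightarrow L^{r^-}$ from Lemma \ref{lem2}, which is what underlies Theorem \ref{Prop - 3 - app}. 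Your transition-region caveat is harmless but not needed: since $0\le (q^+-q^-)/p_\infty<1$, for $t>1$ both cases $A\le 1$ and $A>1$ are bounded directly by $C\,t^{-n(1-1/q^-)(1-(q^+-q^-)/p_\infty)}$.
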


\bigskip
\subsection{Existence of Local Mild Solutions}\label{Sec-Exis-local}
In this section, we establish several versions of local mild solution theorems within various mixed-norm Lebesgue spaces.
Note that while the spatial convolution in $L^{q(x)}(\mathbb{R}^3)$ from Section \ref{sec-conv} requires $q(x)$ to satisfy $\log$-H\"older continuity, the current space--time framework $L^{p(t)}$ used in this section is free from such constraints.  In the following section, we will further sharpen these results by employing the Riesz potential operator,  which will again require $\log$-H\"older continuity of the exponent $p(t)$.

In the following, we consider two variable exponents \( p(t) \in \mathcal{P}(\,(0, T)\,) \) where  \( T \in (0, +\infty) \) and $q(x)\in \mathcal{P}(\mathbb{R}^n)$, and  the mixed-norm   function space
\begin{equation*}
L^{p(\cdot)}\!\big(0, T;\, L^{q(\cdot)}(\mathbb{R}^3)\big),
\end{equation*}
 this space is also denoted \(L^{p(t)}(0,T\, , L^{q(x)}(\mathbb{R}^n)) \) is endowed with the Luxemburg norm defined by
\begin{equation*}
\| u \|_{L^{p(\cdot)}\!(0, T;\, L^{q(\cdot)}(\mathbb{R}^3))}= \left\|\ \|\, u(x,t)\, \|_{L^{q(x)}(\mathbb{R}^2)}\ \right\|_{L^{p(t)}((0,T))}
\end{equation*}
Equivalently, this norm  is  expressed as
\begin{equation*}
\| u \|_{L^{p(\cdot)}\!(0, T;\, L^{q(\cdot)}(\mathbb{R}^3))}
= \inf \left\{
\lambda > 0 :
\int_{0}^{T}
\left(
\frac{ \| u(t, \cdot) \|_{L^{q(\cdot)}(\mathbb{R}^3)} }{ \lambda }
\right)^{p(t)}
dt
\le 1
\right\}.
\end{equation*}

\begin{theorem}\label{THM1 - Local - Existence} 
Let $\alpha\in (\frac{1}{2},1]$,  \(p \in \mathcal{P}([0,1])\) and  \(q \in \mathcal{P}^{\log}(\mathbb{R}^3)\) with $2<q^-\leq q^+ <+\infty$, $q^+=q_\infty$, $p^->2$  and 
\begin{equation}
 \frac{2\alpha}{p^-}+\frac{3\vartheta_1}{2} < \alpha-\frac{1}{2}\, \text{ where } \, 
\vartheta_1 : = \left(\frac{2}{q^-}-\frac{1}{q_\infty} \right)
 \left(\frac{2+q_\infty(1-\frac{2}{q^-})}{1+q_\infty(1-\frac{2}{q^-})}- \frac{1}{q_\infty-1}\right)\label{Part_def of cont rho}
\end{equation}
is the value corresponding to $q$ from (\ref{rho-label}) on the interval $(0,1]$. If 
\begin{enumerate}
\item $ f\in L^{1}\big(0,1;L^{q(\cdot)}(\mathbb{R}^{3})\big)$ is a divergence-free exterior force,  and
\item  $ u_{0}\in L^{q(\cdot)}(\mathbb{R}^{3})$ is a divergence-free initial data,
\end{enumerate}
 then there exists a time $T\in(0,1]$ and a unique mild solution of the fractional Navier--Stokes equations \eqref{NS_Intro} in the space $L^{p(\cdot)}\big(0,T;L^{q(\cdot)}(\mathbb{R}^{3})\big)$.
\end{theorem}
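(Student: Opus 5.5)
The plan is to apply the Banach--Picard principle (Theorem~\ref{Thm:BanachPicard}) in $E_T:=L^{p(\cdot)}\big(0,T;L^{q(\cdot)}(\mathbb{R}^3)\big)$, writing the mild formulation \eqref{NS_Mild} as $u=e_0+B(u,u)$ with $B$ as in \eqref{def-B}. Two estimates are needed:
\begin{itemize}
\item[(i)] $e_0\in E_T$ with $\|e_0\|_{E_T}\to 0$ as $T\to 0$, or at least bounded by a quantity that can be made smaller than $\frac{1}{4C_B(T)}$;
\item[(ii)] $\|B(u,v)\|_{E_T}\le C_B(T)\|u\|_{E_T}\|v\|_{E_T}$ with $C_B(T)\to 0$ as $T\to 0$, or with $C_B(T)$ bounded and $\|e_0\|_{E_T}$ small.
\end{itemize}
Throughout, $T\le 1$, so only the $(0,1]$ branch $\vartheta_1$ of \eqref{rho-label} is relevant.

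\emph{Linear terms.} By \eqref{est-g_eta} and \eqref{eneq p norm}, $\|g_{\alpha,t}*u_0\|_{L^{q(\cdot)}}\le C\|u_0\|_{L^{q(\cdot)}}$ uniformly in $t$, since $q\in\mathcal{P}^{\log}$. The same bound together with the Minkowski inequality gives
\[
\Big\|\int_0^t g_{\alpha,t-s}*f(s)\,ds\Big\|_{L^{q(\cdot)}}\le C\|f\|_{L^1(0,1;L^{q(\cdot)})}.
\]
Hence $\|e_0(t)\|_{L^{q(\cdot)}}\le C(\|u_0\|_{L^{q(\cdot)}}+\|f\|_{L^1L^{q(\cdot)}})$ for all $t\in(0,T]$. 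Taking the $L^{p(\cdot)}(0,T)$ norm of a bounded function gives at most $C\max(T^{1/p^-},T^{1/p^+})$ by \eqref{1 in bound in P}. So $\|e_0\|_{E_T}\to 0$ as $T\to 0$, which settles (i).

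\emph{Bilinear term.} I use \eqref{eneq-F} with $\tau=(t-s)^{1/2\alpha}$, then Lemma~\ref{LemmaCom 1} with $m=n+1$, and then Hölder's inequality in $L^{q(\cdot)/2}$:
\[
\|K_{\alpha,t-s}*(u\otimes v)(s)\|_{L^{q(\cdot)}}\le C(t-s)^{-\frac{1}{2\alpha}-\frac{3\vartheta_1}{2\alpha}}\,\|u(s)\|_{L^{q(\cdot)}}\|v(s)\|_{L^{q(\cdot)}}.
\]
Here $t-s\le 1$, so $\tau\le 1$ and the exponent is $\vartheta_1$. Setting $\gamma:=\frac{1+3\vartheta_1}{2\alpha}$, it follows that
\[
\|B(u,v)(t)\|_{L^{q(\cdot)}}\le C\int_0^t (t-s)^{-\gamma}\,h(s)\,ds,\qquad h(s):=\|u(s)\|_{L^{q(\cdot)}}\|v(s)\|_{L^{q(\cdot)}}.
\]
By Hölder in $L^{p(\cdot)/2}$ in time, $\|h\|_{L^{p(\cdot)/2}(0,T)}\le C\|u\|_{E_T}\|v\|_{E_T}$. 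It therefore remains to bound the one-dimensional fractional integral $h\mapsto\int_0^t(t-s)^{-\gamma}h(s)\,ds$ from $L^{p(\cdot)/2}(0,T)$ into $L^{p(\cdot)}(0,T)$, with a constant that is a positive power of $T$.

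\emph{Main obstacle: the time convolution.} No log-Hölder regularity of $p$ is assumed, so Riesz potential bounds are not available. I would pass to constant exponents on $(0,T)$, which has finite measure. By Corollary 3.3.4 and \eqref{1 in bound in P}, $L^{p(\cdot)/2}(0,T)\hookrightarrow L^{p^-/2}(0,T)$, and $L^{\infty}(0,T)\hookrightarrow L^{p(\cdot)}(0,T)$ with norm at most $C\max(T^{1/p^-},T^{1/p^+})$. It thus suffices to bound the fractional integral from $L^{p^-/2}$ into $L^\infty$. For each $t$, Hölder gives
\[
\int_0^t(t-s)^{-\gamma}h(s)\,ds\le \|(\cdot)^{-\gamma}\|_{L^{(p^-/2)'}(0,T)}\,\|h\|_{L^{p^-/2}(0,T)},
\]
and the kernel norm is finite and equal to $C\,T^{1-\gamma-2/p^-}$ precisely when $\gamma\,(p^-/2)'<1$, that is, $\gamma<1-\frac{2}{p^-}$. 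Multiplying by $2\alpha$, this is $1+3\vartheta_1<2\alpha-\frac{4\alpha}{p^-}$, which is exactly hypothesis \eqref{Part_def of cont rho}. The resulting bound is
\[
C_B(T)\le C\,T^{1-\gamma-2/p^-}\max\big(T^{1/p^-},T^{1/p^+}\big)\longrightarrow 0 .
\]
The step to check carefully is the variable-exponent embedding constants on $(0,T)$ and their uniformity in $T\le 1$. Once they are in place, choose $T$ small enough that $\|e_0\|_{E_T}<\frac{1}{4C_B(T)}$, and Theorem~\ref{Thm:BanachPicard} gives existence and uniqueness of the mild solution in the ball of radius $2\|e_0\|_{E_T}$. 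The divergence-free conditions on $u_0$ and $f$ guarantee that $e_0$, and hence $u$, is divergence free.
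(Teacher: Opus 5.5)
Your proposal is correct and follows the paper's proof: the same $T$-uniform $L^{q(\cdot)}$ bound on $e_0$, the same reduction via \eqref{eneq-F} and Lemma~\ref{LemmaCom 1} to the kernel $(t-s)^{-(1+3\vartheta_1)/(2\alpha)}$, and a H\"older step in time whose exponent $(p^-/2)'=\frac{p^-}{p^--2}$ is exactly the paper's $\widetilde{p}^+$, giving the same factor $T^{1-\gamma-2/p^-}$. The only differences are cosmetic: you first embed into the constant-exponent space $L^{p^-/2}(0,T)$, whereas the paper applies variable H\"older with $\widetilde{p}(\cdot)$ and bounds the modular using $t-s\le 1$; also, since $p$ may be unbounded, your claim that $\|e_0\|_{E_T}\to 0$ can fail, but this is harmless because you only need it bounded uniformly in $T$ while $C_B(T)\to 0$.
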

\begin{proof}We begin by estimating the initial data term,
\begin{align*}
e_0(t,x) = g_{\alpha, t} * u_0(x)+ \int_{0}^{t} g_{\alpha, t-s} * f(s, x)\, ds
\end{align*}
in $L^{p(\cdot)}\big(0,T;L^{q(\cdot)}(\mathbb{R}^{3})\big)$ with $T\in(0, 1]$. Since 
 $q\in \mathcal{P}^{\log}(\mathbb{R}^{3})$, by virtue of the estimates in (\ref{est-g_eta}) and   (\ref{eneq p norm}) we have  
 \begin{equation*}
 \|g_{\alpha, t} * u_0\|_{L^{q(\cdot)}(\mathbb{R}^3)}
\leq \left\| \eta _{t^{\frac{1}{2\alpha} },3 + 2\alpha}*|u_0| \ \right\|_{L^{q(\cdot)}(\mathbb{R}^3)} 
 \leq C \| u_0\|_{L^{q(\cdot)}(\mathbb{R}^3)},
 \end{equation*}
then applying the $L^{p(\cdot)}$-norm yields
\begin{align*}
\|g_{\alpha, t} * u_0\|_{L^{p(t)}(0,T;L^{q(\cdot)}(\mathbb{R}^{3}))}\leq C \|1 \|_{L^{p(\cdot)}((0,1))} \|u_0\|_{L^{q(\cdot)}(\mathbb{R}^3)}
\leq C  \|u_0\|_{L^{q(\cdot)}(\mathbb{R}^3)}.
\end{align*}
Applying Minkowski's inequality and the estimate in \eqref{eneq p norm}, the second term of $e_0$ can be estimated as follows, 
 \begin{align*}
 \left\| \int_{0}^{t} g_{\alpha, t-s} * f(s, x)\, ds \right\|_{L^{q(x)}(\mathbb{R}^3)} &\leq C 
 \int_{0}^{t} \left\|g_{\alpha, t-s} * f(s, x)\right\|_{L^{q(x)}(\mathbb{R}^3)}\, ds \\
 &\leq C \int_{0}^{t} \left\|\eta _{(t-s)^{ \frac{1}{2\alpha}},3 + 2\alpha} *| f(s, x)|\right\|_{L^{q(x)}(\mathbb{R}^3)}\, ds \\
 &\leq  C \int_{0}^{t} \|f(s, x)\|_{L^{q(x)}(\mathbb{R}^3)}\, ds,
 \end{align*}
 which implies that
 \begin{align*}
 \left\| \left\| \int_{0}^{t} g_{\alpha, t-s} * f(s, x)\, ds \right\|_{L^{q(x)}(\mathbb{R}^3)} \right\|_{L^{p(t)}((0,T))} &\leq  C \|1 \|_{L^{p(t)}((0,1))} \int_{0}^{T } \|f(s, x)\|_{L^{q(x)}(\mathbb{R}^3)}\, ds,
 \end{align*}
 with mentioning that $\|1 \|_{L^{p(t)}((0,1))}\leq C$ for some positive $C$ independent of $T$  by inequality~(\ref{1 in bound in P}). Consequently, $e_0\in L^{p(\cdot)}\big(0,T;L^{q(\cdot)}(\mathbb{R}^{3})\big)$ and we arrive  at the estimate,
\begin{align}\label{est1-e0}
\|e_0\|_{L^{p(\cdot)}(0,T;L^{q(\cdot)}(\mathbb{R}^{3}))} \leq C_1 \left(\left\|u_0 \right\|_{L^{q(\cdot)}(\mathbb{R}^3)}+ 
\left\| \| f(\cdot, x) \|_{L^{q(x)}(\mathbb{R}^3)} \right\|_{L^{1}((0,1))}
\right),
\end{align}
where $C_1$ is independent of $T$. Next, we estimate the bilinear operator 
\begin{align*}
B(u,u)(x,t) =  -\int_{0}^t K_{\alpha,t-s}\ast (u \otimes u)(s, x)\, ds.
\end{align*}
Utilizing the relation \eqref{eqB=F} along with the estimate \eqref{eneq-F} and Minkowski's inequality, it follows that
\begin{align*}
\left\|  \int_{0}^t
K_{\alpha,t-s}\ast (u \otimes u)(s, x)
\, ds\right\|_{L^{q(x)}(\mathbb{R}^3)} &\leq C 
\max_{j=1,2,3} \int_{0}^{t} \sum_{h,k=1}^{3} \left\|  
K^{j;h,k}_{\alpha,t-s} \ast \left(u_h \, u_k\right) \, \right\|_{L^{q(\cdot)}(\mathbb{R}^3)}(s) ds \\
&\leq C 
 \int_{0}^{t}\frac{1}{(t-s)^{\frac{1}{ 2\alpha}}} \sum_{h,k=1}^{3} \left\| 
 \eta_{(t-s)^{\frac{1}{2\alpha}},4} \ast \left|u_h \, u_k\right| \, \right\|_{L^{q(\cdot)}(\mathbb{R}^3)}(s)\, ds .
\end{align*}
Now, in view of the assumptions on $q(\cdot)$, applying Lemma~\ref{LemmaCom 1} with the fact that  $0\leq t-s\leq 1$ for all $s,t\in(0,1)$ and H\"{o}lder's inequality, it follows for all $t\in(0,1]$, $0<s <t$ , and $h, k \in \{1, 2, 3\}$ that,
\begin{align*}
 \left\| \eta_{(t-s)^{\frac{1}{2\alpha}},n+1} \ast \left|u_h \, u_k\right| \, \right\|_{L^{q(\cdot)}(\mathbb{R}^3)} &\leq C (t-s)^{\frac{-3\vartheta_1}{2\alpha}} 
  \left\|  u_h \, u_k \, \right\|_{L^{\frac{q(\cdot)}{2}}(\mathbb{R}^3)} \\ 
  &\leq C (t-s)^{\frac{-3\vartheta_1}{2\alpha}}  \left\|  u_k \, \right\|_{L^{q(\cdot)}(\mathbb{R}^3)} \left\|  u_h \, \right\|_{L^{q(\cdot)}(\mathbb{R}^3)}
\end{align*}
where $\vartheta_1$ is given in (\ref{Part_def of cont rho}), we conclude that 
\begin{align}\label{enequa THM1}
\left\|   \int_{0}^t K_{\alpha,t-s}\ast (u \otimes u)(s, x)\, ds \right\|_{L^{q(x)}(\mathbb{R}^3)} &\leq C \int_{0}^{t}\frac{1}{(t-s)^{\frac{1+3\vartheta_1}{2\alpha}}} \| u(s, x) \|_{L^{q(x)}(\mathbb{R}^3)}^2 \,ds.
\end{align}
Now, since $p^- > 2$, we define the auxiliary exponent $\widetilde{p}(t)$ by the relation
$\frac{1}{\widetilde{p}(t)}+\frac{2}{p(t)}=1,t\in(0,1)$. Therefore,    $\widetilde{p}(\cdot)$ is bounded and 
 $\widetilde{p}^+= \frac{p^-}{p^- -2}.$
Now, for a fixed $t\in (0,1)$, by H\"{o}lder's inequality,  
\begin{align*}
\int_{0}^{t}\frac{\left\| u(s, x) \right\|_{L^{q(x)}(\mathbb{R}^3)}^2}{(t-s)^{\frac{1+3\vartheta_1}{2\alpha}}}  \,ds \leq \left\|\| u(s, x) \|_{L^{q(x)}(\mathbb{R}^3)}\right\|_{L^{p(s)}((0,t))}^2 
\left\|(t-s)^{-\frac{1+3\vartheta_1}{2\alpha}}\right\|_{L^{\widetilde{p}(s)}((0,t))}.
\end{align*} 
According to the hypotheses of the theorem, we have $\frac{2\alpha}{p^-}+\frac{3\vartheta_1}{2} < \alpha-\frac{1}{2}$, which implies that $0<\widetilde{p}^+ \frac{1+3\vartheta_1}{2\alpha}<1$, thus,   we have the following  estimate, for every $0<t\leq 1$,
\begin{align*}
\varrho_{\widetilde{p}(s),(0,t)}\left( (t-s)^{-\frac{1+3\vartheta_1}{2\alpha}} \right)= \int_0^t (t-s)^{-\widetilde{p}(s)\frac{1+3\vartheta_1}{2\alpha}}\,ds \leq \int_0^t (t-s)^{-\widetilde{p}^+\frac{1+3\vartheta_1}{2\alpha}}\,ds = 
\frac{t^{1-\widetilde{p}^+\frac{1+3\vartheta_1}{2\alpha}}}{1-\widetilde{p}^+\frac{1+3\vartheta_1}{2\alpha}}.
\end{align*}
Therefore, we arrive at the following estimate
\begin{align*}
\left\|\, (t-s)^{-\frac{1+3\vartheta_1}{2\alpha}}\,\right\|_{L^{\widetilde{p}(s)}((0,t))} \leq C T^{\,\delta},\, \delta:=\frac{1}{\widetilde{p}^+}-\frac{1+3\vartheta_1}{2\alpha},
\end{align*}
for every $0<t\leq T$. It follows that there exists $C>0$ independent of $T$ such that,  for every $t\in(0,T]$,
\begin{align*}
\int_{0}^{t}\frac{\| u(s, x) \|_{L^{q(x)}(\mathbb{R}^3)}^2}{(t-s)^{\frac{1+3\vartheta_1}{2\alpha}}}  \,ds \leq C T^{\,\delta} \left\|\| u(t, x) \|_{L^{q(x)}(\mathbb{R}^3)}\right\|_{L^{p(t)}((0,T))}^2.
\end{align*}
Finally, applying the $L^{p(\cdot)}$-norm we conclude that 
\begin{align*}
\left\| \left\|  \int_{0}^t K_{\alpha,t-s}\ast (u \otimes u)(s, x)\, ds\right\|_{L^{q(x)}(\mathbb{R}^3)} 
\right\|_{L^{p(t)}((0,T))} &\leq C_2 T^{\,\delta} \left\|\| u(t, x) \|_{L^{q(x)}(\mathbb{R}^3)}\right\|_{L^{p(t)}((0,T))}^2.
\end{align*}
with a constant $C_2 > 0$ independent of $T$. Therefore, we obtain the following estimate for the bilinear operator $B(u,u)$ in the space $L^{p(\cdot)}\big(0,T;L^{q(\cdot)}(\mathbb{R}^{3})\big)$,
\begin{align}\label{est1-E(u,u)}
\|  B(u,u)\|_{L^{p(\cdot)}(0,T;L^{q(\cdot)}(\mathbb{R}^{3}))}\ \leq C_2 T^{\,\delta} \ 
 \| u \|_{L^{p(\cdot)}(0,T;L^{q(\cdot)}(\mathbb{R}^{3}))} \| u \|_{L^{p(\cdot)}(0,T;L^{q(\cdot)}(\mathbb{R}^{3}))}.
\end{align}
 To apply  Banach-Picard principle - Theorem~\ref{Thm:BanachPicard} - we may choose $T\in (0,1]$ small enough such that 
\begin{align*}
\|e_0\|_{L^{p(\cdot)}(0,T;L^{q(\cdot)}(\mathbb{R}^{3}))} \leq C_1 \left(\left\|u_0 \right\|_{L^{q(\cdot)}(\mathbb{R}^3)}+ 
\left\| \| f(\cdot, x) \|_{L^{q(x)}(\mathbb{R}^3)} \right\|_{L^{1}((0,1))}
\right) <\frac{1}{4C_2 T^{\,\delta}}.
\end{align*}
The proof of the theorem is thus complete.

\end{proof}

In Theorem \ref{THM1 - Local - Existence}  the exponent $q(x)$ is bounded and $p(t)$ may be unbounded. In   the following theorem we deal with the case where $q_\infty=\infty$, the proof proceeds similarly to that of Theorem~\ref{THM1 - Local - Existence}, with the necessary modifications provided by Lemma~\ref{LemmaCom 2}.
\begin{theorem}\label{THM1.1 - Local - Existence}
Let $\alpha\in (\frac{1}{2},1]$,  \(p \in \mathcal{P}([0,1])\) and  \(q \in \mathcal{P}^{\log}(\mathbb{R}^3)\) with $ q^- > 2$, $q_\infty=\infty$, $p^->2$ and $\frac{2\alpha}{p^-}+\frac{3}{q^-}<\alpha-\frac{1}{2}$. If 
\begin{enumerate}
\item $ f\in L^{1}\big(0,1;L^{q(\cdot)}(\mathbb{R}^{3})\big)$ is a divergence-free exterior force,  and
\item  $ u_{0}\in L^{q(\cdot)}(\mathbb{R}^{3})$ is a divergence-free initial data,
\end{enumerate}
 then there exists a time
$T\in(0,1]$ and a unique mild solution of the fractional Navier-Stokes equations \eqref{NS_Intro}
in the space $L^{p(\cdot)}\big(0,T;L^{q(\cdot)}(\mathbb{R}^{3})\big)$.
\end{theorem}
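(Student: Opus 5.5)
The plan is to repeat the fixed-point argument of Theorem~\ref{THM1 - Local - Existence} step by step. The only change is to use Lemma~\ref{LemmaCom 2} in place of Lemma~\ref{LemmaCom 1} for the nonlinear term. We work in $E_T:=L^{p(\cdot)}\big(0,T;L^{q(\cdot)}(\mathbb{R}^{3})\big)$ with $T\in(0,1]$, write the mild formulation \eqref{NS_Mild} as $u=e_0+B(u,u)$, and apply the Banach--Picard principle (Theorem~\ref{Thm:BanachPicard}).

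\emph{Step 1: the linear term $e_0$.} This part is unchanged. Since $q\in\mathcal{P}^{\log}(\mathbb{R}^3)$, inequality \eqref{eneq p norm} holds; it only uses Lemma~\ref{Moll-Thm}, which is valid even when $q$ is unbounded because $1/q\in C^{\log}$. Combining it with \eqref{est-g_eta}, Minkowski's inequality and $\|1\|_{L^{p(\cdot)}((0,1))}\le C$ from \eqref{1 in bound in P}, we obtain \eqref{est1-e0} exactly as before. The constant $C_1$ does not depend on $T$.

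\emph{Step 2: the bilinear term.} By \eqref{eqB=F}, \eqref{eneq-F} and Minkowski's inequality, the $L^{q(\cdot)}$-norm of $B(u,u)(t)$ is bounded by
\[
C\int_0^t (t-s)^{-\frac{1}{2\alpha}}\sum_{h,k}\big\|\eta_{(t-s)^{\frac{1}{2\alpha}},4}\ast|u_hu_k|\big\|_{L^{q(\cdot)}}\,ds .
\]
Now $q_\infty=\infty$ and $q^-> 2$, and for $0<t-s\le1$ the scale $\tau=(t-s)^{1/(2\alpha)}$ also lies in $(0,1]$. Lemma~\ref{LemmaCom 2} therefore applies with $\vartheta(\tau)=2/q^-$. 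Together with H\"older's inequality $\|u_hu_k\|_{L^{q/2}}\le C\|u_h\|_{L^{q}}\|u_k\|_{L^{q}}$, this gives the bound
\[
\int_0^t (t-s)^{-\frac{1+6/q^-}{2\alpha}}\,\|u(s)\|_{L^{q(\cdot)}}^2\,ds,
\]
which is the analogue of \eqref{enequa THM1} with $3\vartheta_1$ replaced by $6/q^-$. One point needs checking here: the proof of Lemma~\ref{LemmaCom 2} passes through $L^\infty\hookrightarrow L^{q(\cdot)}$, which comes from Lemma~\ref{lem2} with $q_\infty=\infty$. We also need $m=4>3$, which holds.

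\emph{Step 3: the time integral.} Define $\widetilde p$ by $\frac1{\widetilde p}+\frac2{p}=1$, so that $\widetilde p^+=\frac{p^-}{p^--2}$, and apply the variable-exponent H\"older inequality in $s$. The modular of $(t-s)^{-\gamma}$ with $\gamma=\frac{1+6/q^-}{2\alpha}$ is finite and bounded by $Ct^{1-\widetilde p^+\gamma}$ provided $\widetilde p^+\gamma<1$. This condition reads $\frac{1+6/q^-}{2\alpha}<1-\frac2{p^-}$, that is, $\frac{2\alpha}{p^-}+\frac{3}{q^-}<\alpha-\frac12$, which is exactly the hypothesis. We then get \eqref{est1-E(u,u)} with $\delta=\frac1{\widetilde p^+}-\gamma>0$. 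Choosing $T$ small enough that $4C_2T^\delta\|e_0\|_{E_T}<1$, Theorem~\ref{Thm:BanachPicard} yields a unique solution.

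\emph{Main obstacle.} The delicate step is Step 2. Because $q$ may be unbounded, and even equal to $\infty$ on sets, the estimate has to go through Lemma~\ref{LemmaCom 2} rather than Theorem~\ref{Thm-Lr-Conv}. I also need the H\"older inequality for $L^{q/2}$ to hold with $q/2\in\mathcal P$, which is fine since $q^->2$. Finally, the time restriction $t-s\le 1$ is what makes the exponent $2/q^-$ the relevant one.
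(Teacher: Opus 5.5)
Your proposal is correct and follows exactly the route the paper indicates: rerun the proof of Theorem~\ref{THM1 - Local - Existence} with Lemma~\ref{LemmaCom 2} in place of Lemma~\ref{LemmaCom 1}, which gives $\vartheta=2/q^-$ on $(0,1]$ and hence the time singularity $(t-s)^{-(1+6/q^-)/(2\alpha)}$. Your verification that the variable-exponent H\"older step in time needs precisely $\frac{2\alpha}{p^-}+\frac{3}{q^-}<\alpha-\frac12$ is also correct.
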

\begin{remark}
Under the assumptions of  Theorems~\ref{THM1 - Local - Existence} and \ref{THM1.1 - Local - Existence} , since we have $\alpha\in (\frac{1}{2},1]$ and $\vartheta_1\geq \frac{1}{q^-}$ by Remark~\ref{Remark on mq},  thus,   the condition $ \frac{2\alpha}{p^-}+\frac{3\vartheta}{2} < \alpha-\frac{1}{2}$ implies that $q^->\frac{3}{2\alpha-1}\geq 3$ and $p^->\frac{4\alpha}{2\alpha-1}\geq 4$.
\end{remark}

\begin{theorem}\label{THM2 - Local - Existence}
Let $\alpha\in (\frac{1}{2},1]$, \(q \in \mathcal{P}^{\log}(\mathbb{R}^3)\) and \(p(\cdot) \in \mathcal{P}([0,1])\) with 
$p^-> \frac{4\alpha}{2\alpha-1}$. 
If 
\begin{enumerate}
\item $ f\in L^{1}\big(0,1;L^{q(\cdot)}(\mathbb{R}^{3})\cap L^\infty(\mathbb{R}^3)\big)$ is a divergence-free exterior force, 
and
\item $ u_{0}\in L^{q(\cdot)}(\mathbb{R}^{3})$ is a divergence-free initial data,
\end{enumerate}
 then there exists a time $T\in(0,1]$ and a unique mild solution of the fractional Navier-Stokes equations \eqref{NS_Intro} in the space $L^{p(\cdot)}\big(0,T;L^{q(\cdot)}(\mathbb{R}^{3})\cap L^\infty(\mathbb{R}^3)\big)$.
\end{theorem}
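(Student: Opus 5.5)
The plan is to copy the scheme of Theorem~\ref{THM1 - Local - Existence}. We work in $E_T:=L^{p(\cdot)}\big(0,T;L^{q(\cdot)}(\mathbb{R}^3)\cap L^\infty(\mathbb{R}^3)\big)$ with the intersection norm $\max\{\|\cdot\|_{L^{q(\cdot)}},\|\cdot\|_{L^\infty}\}$. We prove $\|e_0\|_{E_T}\le C_1 D$, with $D$ depending only on the data, and $\|B(u,v)\|_{E_T}\le C_2T^{\delta}\|u\|_{E_T}\|v\|_{E_T}$ for some $\delta>0$, where $C_1,C_2$ do not depend on $T\in(0,1]$. Theorem~\ref{Thm:BanachPicard} then applies once $T$ is small. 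The important point is that the $L^\infty$ component removes any need for the convolution lemmas of Section~\ref{sec-conv} in the nonlinear term. This is why no bounds on $q^\pm$ appear in the statement.

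\textbf{Bilinear term.} By \eqref{eqB=F} and \eqref{eneq-F}, for fixed $0<s<t\le 1$ we need to bound $(t-s)^{-\frac{1}{2\alpha}}\,\eta_{(t-s)^{1/2\alpha},4}*|u_hv_k|$ in both norms.
\begin{itemize}
\item For the $L^{q(\cdot)}$ part, use \eqref{eneq p norm}, whose constant is independent of the scale, together with H\"older: $\|u_hv_k\|_{L^{q(\cdot)}}\le \|u_h\|_{L^{q(\cdot)}}\|v_k\|_{L^\infty}$.
\item For the $L^\infty$ part, use $\|\eta_{\tau,4}*g\|_\infty\le \|\eta_{1,4}\|_1\|g\|_\infty$.
\end{itemize}
Both give
\[
\|B(u,v)(t)\|_{L^{q(\cdot)}\cap L^\infty}\le C\int_0^t (t-s)^{-\frac{1}{2\alpha}}\,\|u(s)\|_{L^{q(\cdot)}\cap L^\infty}\|v(s)\|_{L^{q(\cdot)}\cap L^\infty}\,ds.
\]
Now define $\widetilde p$ by $\frac1{\widetilde p}+\frac2p=1$ and argue exactly as in Theorem~\ref{THM1 - Local - Existence}: H\"older in $s$, then the modular bound for $(t-s)^{-\widetilde p(s)/(2\alpha)}$. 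This requires $\widetilde p^{+}\frac1{2\alpha}<1$, i.e.\ $1-\frac2{p^-}>\frac1{2\alpha}$, which is precisely $p^->\frac{4\alpha}{2\alpha-1}$. It yields $\delta=\frac1{\widetilde p^+}-\frac1{2\alpha}>0$. Taking the $L^{p(\cdot)}(0,T)$ norm and using $\|1\|_{L^{p(\cdot)}(0,1)}\le C$ finishes this step.

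\textbf{Forcing term.} By Minkowski, $\|\int_0^t g_{\alpha,t-s}*f\,ds\|_{L^{q(\cdot)}\cap L^\infty}\le C\int_0^1\|f(s)\|_{L^{q(\cdot)}\cap L^\infty}\,ds$. Here we use \eqref{est-g_eta} with \eqref{eneq p norm} for $L^{q(\cdot)}$, and $\|g_{\alpha,\tau}\|_1\le C$ for $L^\infty$. The bound is uniform in $t$, hence lies in $L^{p(\cdot)}(0,T)$ with a bound independent of $T$.

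\textbf{Initial datum: the main obstacle.} The $L^{q(\cdot)}$ part is as before, $\|g_{\alpha,t}*u_0\|_{L^{q(\cdot)}}\le C\|u_0\|_{L^{q(\cdot)}}$. The difficulty is $\|g_{\alpha,t}*u_0\|_{L^\infty}$ with only $u_0\in L^{q(\cdot)}$.
\begin{itemize}
\item I would first try H\"older in variable exponent form: $|g_{\alpha,t}*u_0(x)|\le C\|\eta_{t^{1/2\alpha},3+2\alpha}(x-\cdot)\|_{L^{q'(\cdot)}}\|u_0\|_{L^{q(\cdot)}}$.
\item Then estimate the $L^{q'(\cdot)}$ norm by the modular computation of Lemma~\ref{LemmaCom 2}, with $\frac{2}{p^-}$ replaced by $\frac1{q^-}$. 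This gives a bound $C\,t^{-\frac{3}{2\alpha q^-}}$ for $t\le1$.
\item This function lies in $L^{p(\cdot)}(0,T)$, with norm $\to0$ as $T\to0$, only if $p^+\frac{3}{2\alpha q^-}<1$.
\end{itemize}
If that condition is not available from the hypotheses, I would not add it. Instead I would try to absorb the singular part of the free evolution, writing $u=g_{\alpha,t}*u_0+w$ and solving for $w$. The cross terms $B(g*u_0,w)$ only need $g*u_0\in L^{q(\cdot)}$ pointwise in time, paired against $w\in L^\infty$, and the $L^{q(\cdot)}$ bounds above handle these.

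I expect this step to be exactly where the argument either closes or forces an extra integrability assumption on $u_0$. So I would check it first, before writing out the routine Picard estimates.
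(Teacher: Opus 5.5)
Your treatment of the bilinear and forcing terms is exactly the paper's proof. It uses the pointwise bound $|u_hv_k|\le\|u_h\|_{L^\infty}|v_k|$ with \eqref{eneq p norm}, the $L^1$ bound on $\eta$ for the $L^\infty$ component, H\"older in time with $\frac{1}{\widetilde p}+\frac{2}{p}=1$ and $\delta=\frac{1}{\widetilde p^{+}}-\frac{1}{2\alpha}>0$, and then Theorem~\ref{Thm:BanachPicard}. What your proposal does not do is finish the initial-datum step, and you are right that the argument breaks there. The paper does not get past it from hypothesis (2) either. Its proof writes $\|g_{\alpha,t}*u_0\|_{L^{q(\cdot)}\cap L^\infty}\le C\|u_0\|_{L^{q(\cdot)}\cap L^\infty}$ and bounds $e_0$ by $\|u_0\|_{L^{q(\cdot)}\cap L^\infty}$. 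In other words, it tacitly assumes $u_0\in L^\infty(\mathbb{R}^3)$ as well; compare Theorem~\ref{THM3 - Local - Existence}, where $u_0\in L^{q(\cdot)}\cap L^\nu$ is assumed explicitly.

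Your fallback cannot repair this, because the statement as written is false. Your own estimates show that $B$ maps $E_T\times E_T$ into $E_T$ and that the forcing term lies in $E_T$. So if $u\in E_T$ is a mild solution, then
\[
g_{\alpha,t}*u_0=u-B(u,u)-\int_0^t g_{\alpha,t-s}*f(s)\,ds\in E_T ,
\]
and in particular $\|g_{\alpha,t}*u_0\|_{L^\infty}\in L^{p(\cdot)}(0,T)$ is necessary. This fails in general. Take $\alpha=1$, $q\equiv 2$, $p\equiv 5$, $f=0$, and $u_0=\operatorname{curl}\big(\chi(x)\log|x|\,e_3\big)$, where $\chi$ is a smooth compactly supported cutoff equal to $1$ near $0$. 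Then $u_0\in L^2$ is divergence-free and equals $U+b$, with $U=(x_2,-x_1,0)/|x|^2$ homogeneous of degree $-1$ and $b\in L^\infty$. Scaling gives $\|g_{1,t}*u_0\|_{L^\infty}\ge c\,t^{-1/2}-C$, which is not in $L^5(0,T)$ for any $T>0$. Hence no solution exists in $E_T$.

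Two consequences follow for your workaround. First, writing $u=g_{\alpha,t}*u_0+w$ with $w\in E_T$ can never place $u$ in $E_T$. Second, your claim about the cross terms fails in the $L^\infty$ component of $B(g_{\alpha,\cdot}*u_0,w)$: $L^{q(\cdot)}$ bounds alone do not control it. You would need $\|K_{\alpha,t-s}\|_{L^{q'(\cdot)}}$, which costs an extra factor $(t-s)^{-\frac{3}{2\alpha q^-}}$ and imposes conditions on $q^-$ that are absent from the statement.

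The missing ingredient is an additional hypothesis, of one of two kinds.
\begin{itemize}
\item Assume $u_0\in L^{q(\cdot)}\cap L^\infty$, which is what the paper's proof actually uses. Then $\|g_{\alpha,t}*u_0\|_{L^\infty}\le\|g_\alpha\|_{L^1}\|u_0\|_{L^\infty}$ uniformly in $t$, and with $\|1\|_{L^{p(\cdot)}(0,1)}\le C$ this gives $\|e_0\|_{E_T}\le C_1D$ with $C_1$ independent of $T$.
\item Impose a compatibility condition between $p$ and $q$, such as the $\frac{3p^+}{2\alpha q^-}<1$ you derived.
\end{itemize}
With either, your argument closes and is essentially the paper's.
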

\begin{proof} Denote $\mathcal{E}_T^{p(\cdot),(q(\cdot),\infty)}=L^{p(\cdot)}\big(0,T;L^{q(\cdot)}(\mathbb{R}^{3})\cap L^\infty(\mathbb{R}^3)\big)$. 
We begin by estimating the initial data term $e_0$ in $L^{p(\cdot)}\big(0,T;L^{q(\cdot)}(\mathbb{R}^{3})\cap L^\infty(\mathbb{R}^3)\big)$ with $\ T\leq 1$. Since 
 $q\in \mathcal{P}^{\log}(\mathbb{R}^{3})$.  by the estimates in (\ref{est-g_eta}) and   (\ref{eneq p norm}) we have  
 \begin{equation*}
 \|g_{\alpha, t} * u_0\|_{L^{q(\cdot)}(\mathbb{R}^3)\cap L^\infty(\mathbb{R}^3)}
\leq\left\| \eta _{t^{\frac{1}{2\alpha} },3 + 2\alpha}*|u_0| \ \right\|_{L^{q(\cdot)}(\mathbb{R}^3)\cap L^\infty(\mathbb{R}^3)} 
 \leq C \| u_0\|_{L^{q(\cdot)}(\mathbb{R}^3)\cap L^\infty(\mathbb{R}^3)},
\end{equation*}   
for all $0<t\leq T \leq 1$, then applying the  $L^{p(\cdot)}$-norm yields
\begin{align*}
\|g_{\alpha, t} * u_0\|_{\mathcal{E}_T^{p(t),(q(\cdot),\infty)}}\leq C \|u_0\|_{L^{q(\cdot)}(\mathbb{R}^3)\cap L^\infty(\mathbb{R}^3)}.
\end{align*}
Applying Minkowski's inequality and the estimates in (\ref{est-g_eta}) and   (\ref{eneq p norm}),  we arrive at the following bound
 \begin{align*}
 \left\| \int_{0}^{t} g_{\alpha, t-s} * f(s, x)\, ds \right\|_{L^{q(x)}(\mathbb{R}^3)\cap L^\infty(\mathbb{R}^3)} &\leq C 
 \int_{0}^{t} \left\|g_{\alpha, t-s} * f(s, x)\right\|_{L^{q(x)}(\mathbb{R}^3)\cap L^\infty(\mathbb{R}^3)}\, ds \\
 &\leq C \int_{0}^{t} \left\|\eta _{(t-s)^{ \frac{1}{2\alpha}},3 + 2\alpha} *| f(s, x)|\right\|_{L^{q(x)}(\mathbb{R}^3)\cap L^\infty(\mathbb{R}^3)}\, ds \\
 &\leq  C \int_{0}^{t} \|f(s, x)\|_{L^{q(x)}(\mathbb{R}^3)\cap L^\infty(\mathbb{R}^3)}\, ds,
 \end{align*}
from which we deduce that
 \begin{align*}
 \left\| \left\| \int_{0}^{t} g_{\alpha, t-s} * f(s, x)\, ds \right\|_{L^{q(x)}(\mathbb{R}^3)\cap L^\infty(\mathbb{R}^3)} \right\|_{L^{p(t)}((0,T))} &\leq  C \left\| \| f(\cdot, x) \|_{L^{q(x)}(\mathbb{R}^3)\cap L^\infty(\mathbb{R}^3)} \right\|_{L^{1}((0,1))}.
 \end{align*}
These bounds lead to the following estimate for the  term $e_0$
\begin{align}\label{est1-e0}
\|e_0\|_{\mathcal{E}_T^{p(\cdot),(q(\cdot),\infty)}} \leq C_1 \left(\left\|u_0 \right\|_{L^{q(\cdot)}(\mathbb{R}^3)\cap L^\infty(\mathbb{R}^3)}+ 
\left\| \| f(\cdot, x) \|_{L^{q(x)}(\mathbb{R}^3)\cap L^\infty(\mathbb{R}^3)} \right\|_{L^{1}((0,1))}
\right)
\end{align}
where $C_1$ is independent of $T$. We now turn to the estimate for the  operator $B(u, u)$,

\begin{align*}
&\left\|  \int_{0}^tK_{\alpha,t-s}\ast (u \otimes u)(s, x)\, ds\right\|_{L^{q(x)}(\mathbb{R}^3)\cap L^\infty(\mathbb{R}^3)} \\
&\hspace{3cm}\leq C 
 \int_{0}^{t}\frac{1}{(t-s)^{\frac{1}{2\alpha}}} \sum_{h,k=1}^{3} \left\| 
 \eta_{(t-s)^{\frac{1}{2\alpha}},4} \ast \left|u_h \, u_k\right| \, \right\|_{L^{q(\cdot)}(\mathbb{R}^3)\cap L^\infty(\mathbb{R}^3)}(s) \,ds 
\end{align*}
 for all $0<s\leq t\leq 1$. Again, since $q\in \mathcal{P}^{\log}(\mathbb{R}^{3})$, by  the estimate in  (\ref{eneq p norm}) we have 
\begin{align*}
\left\| 
 \eta_{(t-s)^{\frac{1}{2\alpha}},4} \ast \left|u_h \, u_k\right| \, \right\|_{L^{q(\cdot)}(\mathbb{R}^3)} &\leq 
 \| u_h \|_{L^\infty(\mathbb{R}^3)} \left\| 
 \eta_{(t-s)^{\frac{1}{2\alpha}},n+1} \ast \left|  u_k\right| \, \right\|_{L^{q(\cdot)}(\mathbb{R}^3)} \\
 &\leq c  \| u_h \|_{L^\infty(\mathbb{R}^3)} \left\| 
  u_k \, \right\|_{L^{q(\cdot)}(\mathbb{R}^3)} \\
  &\leq c \| u \|_{L^{q(\cdot)}(\mathbb{R}^3)\cap L^\infty(\mathbb{R}^3)} \left\| 
  u \, \right\|_{L^{q(\cdot)}(\mathbb{R}^3)\cap L^\infty(\mathbb{R}^3)}
\end{align*}
Similarly 
\begin{align*}
\left\| 
 \eta_{(t-s)^{\frac{1}{2\alpha}},4} \ast \left|u_h \, u_k\right| \, \right\|_{L^{\infty}(\mathbb{R}^3)} 
&\leq \| u_h\, u_k \|_{ L^\infty(\mathbb{R}^3)} \left\|  \eta_{(t-s)^{\frac{1}{2\alpha}},4} \right\|_{ L^{1}(\mathbb{R}^3)}\\
& \leq C \| u \|_{L^{q(\cdot)}(\mathbb{R}^3)\cap L^\infty(\mathbb{R}^3)} \left\| 
  u \, \right\|_{L^{q(\cdot)}(\mathbb{R}^3)\cap L^\infty(\mathbb{R}^3)} \ .
\end{align*}
Consequently, we obtain
\begin{align}\label{FromLocToGlob}
\left\|  \int_{0}^tK_{\alpha,t-s}\ast (u \otimes u)(s, x)\, ds\right\|_{L^{q(x)}(\mathbb{R}^3)\cap L^\infty(\mathbb{R}^3)}\leq C 
 \int_{0}^{t}\frac{\,\| u \|_{L^{q(\cdot)}(\mathbb{R}^3)\cap L^\infty(\mathbb{R}^3)}^2}{(t-s)^{\frac{1}{2\alpha}}} \, ds.
\end{align}

Since $\alpha\in (\frac{1}{2},1]$ and $p^-> \frac{4\alpha}{2\alpha-1}$  then   $p^-\geq 4>2$ and $ \frac{\widetilde{p}^+}{2\alpha}<1$ where  $\widetilde{p}$ is defined by the relation
$\frac{1}{\widetilde{p}}+\frac{2}{p}=1$, thus   for a fixed $t\in (0,1]$  we have  
\begin{align*}
\int_{0}^{t}\frac{\,\| u \|_{L^{q(\cdot)}(\mathbb{R}^3)\cap L^\infty(\mathbb{R}^3)}^2 }{(t-s)^{\frac{1}{2\alpha}}} \,ds \leq \left\|\| u(s, x) \|_{L^{q(x)}(\mathbb{R}^3)\cap L^\infty(\mathbb{R}^3)}\right\|_{L^{p(x)}((0,t))}^2 
\left\|(t-s)^{-\frac{1}{2\alpha}}\right\|_{L^{\widetilde{p}(s)}((0,t))}
\end{align*}
and
\begin{align*}
\varrho_{\widetilde{p}(s),(0,t)}\left( (t-s)^{-\frac{1}{2\alpha}} \right)= \int_0^t (t-s)^{-\widetilde{p}(s)\frac{1}{2\alpha}}\,ds \leq \int_0^t (t-s)^{-\frac{\widetilde{p}^+}{2\alpha}}\,ds = 
\frac{t^{1-\frac{\widetilde{p}^+}{2\alpha}}}{1-\frac{\widetilde{p}^+}{2\alpha}},
\end{align*}
where $\widetilde{p}^+= \frac{p^-}{p^- -2}$ . It follows that  for all $0<t\leq T\leq 1$,
\begin{align*}
\int_{0}^{t}\frac{1}{(t-s)^{\frac{1}{2\alpha}}} \| u(s, x) \|_{L^{q(x)}(\mathbb{R}^3)\cap L^\infty(\mathbb{R}^3)}^2 \,ds \leq C T^{\,\delta} \left\|\| u(\cdot, x) \|_{L^{q(x)}(\mathbb{R}^3)\cap L^\infty(\mathbb{R}^3)}\right\|_{L^{p(\cdot)}((0,T))}^2 ,
\end{align*}
where $\delta:=\frac{1}{\widetilde{p}^+}-\frac{1}{2\alpha}$. Finally, applying the $L^{p(\cdot)}$-norm we conclude that 
\begin{align}\label{est1-E(u,u)}
\|  B(u,u)\|_{\mathcal{E}_T^{p(\cdot),(q(\cdot),\infty)}}\ \leq C_2 T^{\,\delta} \  \| u \|_{\mathcal{E}_T^{p(\cdot),(q(\cdot),\infty)}} \| u \|_{\mathcal{E}_T^{p(\cdot),(q(\cdot),\infty)}},
\end{align}
where $C_2$ is independent of $T$. The proof is completed by invoking the Banach-Picard fixed-point theorem by selecting a sufficiently small  $T\in(0,1]$, thereby ensuring the existence of a unique mild solution in the space $L^{p(\cdot)}\big(0,T;L^{q(\cdot)}(\mathbb{R}^{3})\cap L^\infty(\mathbb{R}^3)\big)$.

\end{proof}
\begin{theorem} \label{THM3 - Local - Existence}
Let $\alpha\in (\frac{1}{2},1]$, \(q \in \mathcal{P}^{\log}(\mathbb{R}^3)\) with $q^-\geq 2$,  $3<\nu \leq  2q^-$ and \(p(\cdot) \in \mathcal{P}((0,1))\) with $p^->2$. If 
$\frac{2\alpha}{p^-}+\frac{1}{2}\max\left(\frac{6}{q^-}(1-\frac{\nu}{2q^+}),\frac{3}{\nu} \right) < \alpha-\frac{1}{2}$ and 
\begin{enumerate}
\item $ f\in L^{1}\big(0,1;L^{q(\cdot)}(\mathbb{R}^3)\cap L^\nu(\mathbb{R}^3)\big)$ is a divergence-free exterior force, 
and
\item $ u_{0}\in L^{q(\cdot)}(\mathbb{R}^3)\cap L^\nu(\mathbb{R}^3)$ is a divergence-free initial data,
\end{enumerate}
 then there exists a time $T\in(0,1]$ and a unique mild solution of the fractional Navier-Stokes equations \eqref{NS_Intro}
in the space $L^{p(\cdot)}\big(0,T;L^{q(\cdot)}(\mathbb{R}^3)\cap L^\nu(\mathbb{R}^3)\big)$.
\end{theorem}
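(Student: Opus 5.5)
The plan mirrors the proofs of Theorems~\ref{THM1 - Local - Existence} and \ref{THM2 - Local - Existence}. We work in the Banach space $\mathcal{E}_T:=L^{p(\cdot)}\big(0,T;L^{q(\cdot)}(\mathbb{R}^3)\cap L^\nu(\mathbb{R}^3)\big)$ with $T\in(0,1]$, and apply the Banach--Picard principle (Theorem~\ref{Thm:BanachPicard}) to $u=e_0+B(u,u)$. Two estimates are needed: a $T$-independent bound on $e_0$, and a bilinear bound $\|B(u,u)\|_{\mathcal{E}_T}\le C_2T^{\delta}\|u\|_{\mathcal{E}_T}^2$ with $\delta>0$. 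Shrinking $T$ then gives $\|e_0\|_{\mathcal{E}_T}<1/(4C_2T^\delta)$.

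\emph{Estimate for $e_0$.} By \eqref{est-g_eta}, $|g_{\alpha,t}*u_0|\le C\,\eta_{t^{1/(2\alpha)},3+2\alpha}*|u_0|$.
\begin{itemize}
\item In $L^{q(\cdot)}$, \eqref{eneq p norm} (valid since $q\in\mathcal{P}^{\log}$) gives a bound uniform in $t$.
\item In $L^\nu$, classical Young's inequality with $\|\eta_{t,m}\|_{L^1}=c(m)$ gives the same.
\end{itemize}
Hence $\|g_{\alpha,t}*u_0\|_{L^{q(\cdot)}\cap L^\nu}\le C\|u_0\|_{L^{q(\cdot)}\cap L^\nu}$. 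The forcing term is handled the same way after Minkowski's integral inequality in $s$. Taking the $L^{p(\cdot)}(0,T)$ norm and using $\|1\|_{L^{p(\cdot)}(0,1)}\le C$ from \eqref{1 in bound in P}, we get $\|e_0\|_{\mathcal{E}_T}\le C_1\big(\|u_0\|_{L^{q(\cdot)}\cap L^\nu}+\|f\|_{L^1(0,1;L^{q(\cdot)}\cap L^\nu)}\big)$ with $C_1$ independent of $T$.

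\emph{Bilinear estimate.} Use \eqref{eqB=F}, \eqref{eneq-F} and Minkowski's inequality to reduce to
\[
\int_0^t (t-s)^{-\frac{1}{2\alpha}}\sum_{h,k}\big\|\eta_{(t-s)^{1/(2\alpha)},4}*|u_hu_k|\big\|_{L^{q(\cdot)}\cap L^\nu}\,ds .
\]
Apply Lemma~\ref{Lemma of LV cap Lp} with $p=q$, $n=3$, $\tau=(t-s)^{1/(2\alpha)}\le1$; its hypotheses $q^-\ge2$, $2\le\nu\le2q^-$ hold. For $\tau\le1$ the relevant exponent is $\max\big(3\omega(\tau),3/\nu\big)=\max\big(\tfrac{6}{q^-}(1-\tfrac{\nu}{2q^+}),\tfrac3\nu\big)=:M$. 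The integrand is therefore bounded by $C(t-s)^{-\gamma}\|u(s)\|^2_{L^{q(\cdot)}\cap L^\nu}$ with $\gamma=\frac{1+M}{2\alpha}$.

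\emph{Time integral.} Define $\widetilde p$ by $\frac1{\widetilde p}+\frac2p=1$, so that $\widetilde p^+=\frac{p^-}{p^--2}$. Variable Hölder in $s$ on $(0,t)$ bounds the integral by $\|u\|^2_{\mathcal{E}_T}\,\|(t-s)^{-\gamma}\|_{L^{\widetilde p(\cdot)}(0,t)}$. The key check is that $\widetilde p^+\gamma<1$. This rearranges to $1-\frac2{p^-}>\frac{1+M}{2\alpha}$, i.e.\ $\frac{2\alpha}{p^-}+\frac M2<\alpha-\frac12$, which is exactly the hypothesis. The modular computation from Theorem~\ref{THM1 - Local - Existence} then gives $\|(t-s)^{-\gamma}\|_{L^{\widetilde p(\cdot)}(0,t)}\le CT^{\delta}$ with $\delta=\frac1{\widetilde p^+}-\gamma>0$. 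A final $L^{p(\cdot)}(0,T)$ norm, again using $\|1\|_{L^{p(\cdot)}(0,1)}\le C$, yields the bilinear bound.

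\emph{Main obstacle.} The main difficulty is the correct use of Lemma~\ref{Lemma of LV cap Lp}. The factor $\max(t^{-n\omega},t^{-n/\nu})$ must be read with $t$ replaced by $(t-s)^{1/(2\alpha)}$, giving exponents divided by $2\alpha$, and only the small-time branch of $\omega$ is used since $T\le1$. Once the exponent $M$ is identified, the condition in the theorem matches exactly. The assumption $\nu>3$ makes $3/\nu<1$, so $\gamma<1/\alpha$, which is compatible with the hypothesis. Uniqueness comes from Banach--Picard within the ball of radius $2\delta_0$ of $\mathcal{E}_T$.
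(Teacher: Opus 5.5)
Your proposal is correct and follows the paper's proof essentially step by step. It uses the same $T$-independent bound on $e_0$, and the same application of Lemma~\ref{Lemma of LV cap Lp} with $\tau=(t-s)^{1/(2\alpha)}\le 1$ to get $\gamma=\frac{1+M}{2\alpha}$. It then runs the H\"{o}lder/modular argument of Theorem~\ref{THM2 - Local - Existence}, and your check that $\widetilde p^{+}\gamma<1$ is equivalent to the hypothesis makes explicit what the paper leaves implicit. The one slip is your side remark that $\nu>3$ alone gives $\gamma<1/\alpha$: the first entry of $M$ can exceed $1$ (e.g.\ $q^-=2$). That remark is never used, and $\gamma<1/\alpha$ follows from the hypothesis anyway.
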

\begin{proof}The proof is similar to that of Theorem \ref{THM2 - Local - Existence}.  By the estimates in (\ref{est-g_eta}) and   (\ref{eneq p norm}) we have  
 \begin{equation*}
 \|g_{\alpha, t} * u_0\|_{L^{q(\cdot)}(\mathbb{R}^3)\cap L^\nu(\mathbb{R}^3)}
\leq\left\| \eta _{t^{\frac{1}{2\alpha} },3 + 2\alpha}*|u_0| \ \right\|_{L^{q(\cdot)}(\mathbb{R}^3)\cap L^\nu(\mathbb{R}^3)} 
 \leq C \| u_0\|_{L^{q(\cdot)}(\mathbb{R}^3)\cap L^\nu(\mathbb{R}^3)},
\end{equation*}   
similarly, we estimate the second term of $e_0$. By the assumptions on $q(\cdot)$ we can apply  Lemma \ref{Lemma of LV cap Lp}, which yields, for all $0<s<t<1$,
\begin{align*}
\left\| 
 \eta_{(t-s)^{\frac{1}{2\alpha}},3} \ast \left|u_h \, u_k\right| \, \right\|_{L^{q(\cdot)}(\mathbb{R}^3)\cap L^\nu(\mathbb{R}^3)} &\leq 
C(t-s)^{-\frac{\max\left(3\omega,\frac{3}{\nu} \right)}{2\alpha} } 
\| u_h \|_{L^{q(\cdot)}(\mathbb{R}^3)\cap L^\nu(\mathbb{R}^3)} \left\| 
  u_k \, \right\|_{L^{q(\cdot)}(\mathbb{R}^3)\cap L^\nu(\mathbb{R}^3)}
\end{align*}
where $\omega:=\frac{2}{q^-}(1-\frac{\nu}{2q^+})$, then it follows
\begin{align*}
\left\|  \int_{0}^tK_{\alpha,t-s}\ast (u \otimes u)(s, x)\, ds\right\|_{L^{q(x)}(\mathbb{R}^3)\cap L^\nu(\mathbb{R}^3)} \leq C \int_{0}^{t}\frac{\,\| u(s, x) \|_{L^{q(x)}(\mathbb{R}^3)\cap L^\nu(\mathbb{R}^3)}^2}{(t-s)^{\gamma}}  \,ds,
\end{align*}
for all $ 0<t\leq 1 $, where $\gamma:=\frac{1}{2\alpha}\left(1+\max\left(\frac{6}{q^-}(1-\frac{\nu}{2q^+}),\frac{3}{\nu} \right)\right)$,  and 
with similar arguments as in the proof of Theorem \ref{THM2 - Local - Existence} we conclude the proof.

\end{proof}
\begin{remark}We give some remarks based on the condition $\frac{2\alpha}{p^-}+\frac{1}{2}\max\left(\frac{6}{q^-}(1-\frac{\nu}{2q^+}),\frac{3}{\nu} \right) < \alpha-\frac{1}{2}$.
The condition involves   $\max(\frac{6}{q^-}(1-\frac{\nu}{2q^+}),\frac{3}{\nu} )$ 
which may be true for large enough $q^-$ and $\nu$, we see that $ \nu>\frac{3}{2\alpha-1}\geq 3$ which yields that $2q^->\frac{3}{2\alpha-1}\geq 3$. It possible to take  $q^-=2$, for example if  $q^-=2$ and $\nu=\frac{7}{2}$ then $\frac{6}{q^-}(1-\frac{\nu}{2q^+})=3(1-\frac{7}{4q^+})$ and
 $\frac{3}{\nu}= \frac{6}{7}$ then if $q^+ \leq \frac{49}{20}$ we get 
  $\max(\frac{6}{q^-}(1-\frac{\nu}{2q^+}),\frac{3}{\nu} )=\frac{6}{7}$.
Then the condition becomes    $\frac{2\alpha}{p^-}+\frac{3}{7}< \alpha-\frac{1}{2}$ or equivalently, $p^- > \frac{28\alpha}{14\alpha - 13}$ provided that $1\geq \alpha > \frac{13}{14}$.
\end{remark}

By applying the estimate established in (\ref{Omega-Lp-Log}) and invoking arguments similar to those used above, we establish Theorem~\ref{THM4 - Local - Existence} for the case $q^- = q_\infty$, thereby providing a complementary result to the regime $q^+ = q_\infty$ established in Theorem~\ref{THM1 - Local - Existence}. In summary, we have addressed three primary cases: the scenarios $q_\infty\in\{q^-,q^+\}$, and the generalized setting $L^{q(\cdot)}(\mathbb{R}^3)\cap L^\nu(\mathbb{R}^3\big)$ presented in Theorem~\ref{THM3 - Local - Existence}. By working in the intersection $L^{q(\cdot)}(\mathbb{R}^3) \cap L^\nu(\mathbb{R}^3)$, we gain greater flexibility for the convolution estimates than a single variable exponent $L^{q(\cdot)}$ space allows.

\begin{theorem}\label{THM4 - Local - Existence}
Let $\alpha\in (\frac{1}{2},1]$, \(q \in \mathcal{P}^{\log}(\mathbb{R}^3)\) with $q^-=q_\infty>3$, \(p(\cdot) \in \mathcal{P}((0,1))\) with $p^->4$. If 
$\frac{2\alpha}{p^-}+\frac{3}{2}\left(\frac{2}{q^-}-\frac{1}{q^+}\right) < \alpha-\frac{1}{2}$ and 
\begin{enumerate}
\item $ f\in L^{1}\big(0,1;L^{q(\cdot)}(\mathbb{R}^3)\big)$ is a divergence-free exterior force, 
and
\item $ u_{0}\in L^{q(\cdot)}(\mathbb{R}^3)$ is a divergence-free initial data,
\end{enumerate}
 then there exists a time $T\in(0,1]$ and a unique mild solution of the fractional Navier-Stokes equations \eqref{NS_Intro}
in the space $L^{p(\cdot)}\big(0,T;L^{q(\cdot)}(\mathbb{R}^3)\big)$.
\end{theorem}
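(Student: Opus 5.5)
The plan mirrors the proof of Theorem~\ref{THM1 - Local - Existence}. The only change is that Lemma~\ref{LemmaCom 1} is replaced by the estimate \eqref{Omega-Lp-Log}, or equivalently \eqref{P-p/2Comp-2}, which covers the regime $q^-=q_\infty$. We work in $E_T:=L^{p(\cdot)}\big(0,T;L^{q(\cdot)}(\mathbb{R}^3)\big)$ with $T\in(0,1]$, write the mild formulation as $u=e_0+B(u,u)$, and close with the Banach--Picard principle, Theorem~\ref{Thm:BanachPicard}.

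\emph{Step 1: the term $e_0$.} Since $q\in\mathcal{P}^{\log}(\mathbb{R}^3)$, the bounds \eqref{est-g_eta} and \eqref{eneq p norm} give $\|g_{\alpha,t}*u_0\|_{L^{q(\cdot)}}\le C\|u_0\|_{L^{q(\cdot)}}$ uniformly in $t$. For the forcing term, Minkowski's inequality gives
\[
\Big\|\int_0^t g_{\alpha,t-s}*f(s)\,ds\Big\|_{L^{q(\cdot)}}\le C\int_0^1\|f(s)\|_{L^{q(\cdot)}}\,ds .
\]
Taking the $L^{p(\cdot)}((0,T))$ norm and using $\|1\|_{L^{p(\cdot)}((0,1))}\le C$ from \eqref{1 in bound in P}, we obtain $\|e_0\|_{E_T}\le C_1\big(\|u_0\|_{L^{q(\cdot)}}+\|f\|_{L^1(0,1;L^{q(\cdot)})}\big)$, with $C_1$ independent of $T$.

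\emph{Step 2: the bilinear term, pointwise in time.} By \eqref{eqB=F} and \eqref{eneq-F}, $\|B(u,u)(t)\|_{L^{q(\cdot)}}$ is bounded by
\[
C\int_0^t (t-s)^{-\frac1{2\alpha}}\sum_{h,k}\big\|\eta_{(t-s)^{1/(2\alpha)},4}*|u_hu_k|\big\|_{L^{q(\cdot)}}\,ds .
\]
Because $q^-=q_\infty\ge 2$, estimate \eqref{P-p/2Comp-2} with $n=3$ and $m=4>3$ applies. Since $0<t-s\le 1$, the scale $\tau=(t-s)^{1/(2\alpha)}$ also lies in $(0,1]$, so we may use $\phi=\frac2{q^-}-\frac1{q^+}$. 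Combining this with Hölder's inequality $\|u_hu_k\|_{L^{q(\cdot)/2}}\le C\|u\|_{L^{q(\cdot)}}^2$ yields
\[
\|B(u,u)(t)\|_{L^{q(\cdot)}}\le C\int_0^t (t-s)^{-\gamma}\|u(s)\|^2_{L^{q(\cdot)}}\,ds,\qquad \gamma:=\frac{1}{2\alpha}\Big(1+3\Big(\frac2{q^-}-\frac1{q^+}\Big)\Big).
\]

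\emph{Step 3: the time integral.} Since $p^->4>2$, define $\widetilde p$ by $\frac1{\widetilde p}+\frac2p=1$, so that $\widetilde p^+=\frac{p^-}{p^--2}$. Hölder's inequality in the variable exponent $L^{p(\cdot)}((0,t))$ bounds the integral by
\[
\|u\|_{E_T}^2\,\big\|(t-\cdot)^{-\gamma}\big\|_{L^{\widetilde p(\cdot)}((0,t))}.
\]
The hypothesis $\frac{2\alpha}{p^-}+\frac32\big(\frac2{q^-}-\frac1{q^+}\big)<\alpha-\frac12$ is exactly $\gamma\,\widetilde p^+<1$. Since $(t-s)^{-\gamma}\ge 1$ on $(0,1)$, the modular is at most $\int_0^t(t-s)^{-\gamma\widetilde p^+}ds\le C\,t^{1-\gamma\widetilde p^+}$. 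Hence the norm is at most $CT^{\delta}$ with $\delta=\frac1{\widetilde p^+}-\gamma>0$. This step needs care: passing from a modular bound to a norm bound for $T\le 1$ only works because the exponent is raised to $\widetilde p^+$, and this is where a positive power of $T$ must survive uniformly. Taking the $L^{p(\cdot)}((0,T))$ norm once more, as in the proof of Theorem~\ref{THM1 - Local - Existence}, gives
\[
\|B(u,u)\|_{E_T}\le C_2T^{\delta}\|u\|_{E_T}^2 .
\]

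\emph{Step 4: conclusion.} Choose $T\in(0,1]$ so small that $C_1\big(\|u_0\|_{L^{q(\cdot)}}+\|f\|_{L^1(0,1;L^{q(\cdot)})}\big)<\frac{1}{4C_2T^{\delta}}$. Theorem~\ref{Thm:BanachPicard} then yields a unique mild solution in $E_T$.

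The main obstacle is Step 2. One must check that \eqref{Omega-Lp-Log} applies to a kernel with decay $m=4$ at the time-dependent scale $(t-s)^{1/(2\alpha)}$, and that the $(0,1]$ branch of $\phi$ is the relevant one. I would verify this by re-running the proof of Lemma~\ref{Lemma of LV cap Lp} with $\nu=q^-$, using Theorem~\ref{Prop2}(3) and the embedding $L^{q(\cdot)}\hookrightarrow L^{q^-}$ from Lemma~\ref{lem2}.
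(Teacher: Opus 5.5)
Your proposal is correct and follows the route the paper indicates: you rerun the proof of Theorem~\ref{THM1 - Local - Existence} with Lemma~\ref{LemmaCom 1} replaced by \eqref{Omega-Lp-Log} (equivalently \eqref{P-p/2Comp-2} plus H\"older), using the $(0,1]$ branch $\phi=\frac{2}{q^-}-\frac{1}{q^+}$, so that the hypothesis becomes exactly $\gamma\,\widetilde p^{+}<1$ and Banach--Picard closes for small $T$. Your extra care in Step 3, where you convert the modular bound into a norm bound $CT^{\delta}$ for $T\le 1$, covers a detail the paper passes over silently, and you handle it correctly.
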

From the conditions established in this theorem, we derive the following lower bounds for the integrability exponents,  $p^-> \frac{4\alpha}{2\alpha-1}\geq 4$ and $q^-> \frac{3}{2\alpha-1}\geq 3$. 

We conclude this section with a local existence theorem established in the space $$L^{p(\cdot)}\big(0,T; L^{q(\cdot)}(\mathbb{R}) \cap L^2(\mathbb{R})\big).$$ By leveraging the estimates from Lemma~\ref{Lemma of L2 cap Lp} and invoking arguments analogous to those presented in the preceding proofs, we obtain the following result:
\begin{theorem} \label{THMFinall - Local - Existence}
Let $\alpha\in (\frac{3}{4},1]$, \(q \in \mathcal{P}^{\log}(\mathbb{R})\) with $q^-\geq 2$, $ q^-=q_\infty$ and \(p(\cdot) \in \mathcal{P}((0,1))\) with 
$\frac{2\alpha}{p^-}+\frac{1}{2}\max\left(\frac{1}{2},\frac{1+\vartheta}{2\alpha} \right) < \alpha-\frac{1}{2}$ where $\vartheta$ is calculated from (\ref{rho of Lp L2}) for $t\in(0,1]$. 
If 
\begin{enumerate}
\item $ f\in L^{1}\big(0,1;L^{q(\cdot)}(\mathbb{R})\cap L^2(\mathbb{R})\big)$ is a divergence-free exterior force, 
and
\item $ u_{0}\in L^{q(\cdot)}(\mathbb{R})\cap L^2(\mathbb{R})$ is divergence-free initial data,
\end{enumerate}
 then there exists a time $T\in(0,1]$ and a unique mild solution of the fractional Navier-Stokes equations \eqref{NS_Intro} 
in the space $L^{p(\cdot)}\big(0,T;L^{q(\cdot)}(\mathbb{R})\cap L^2(\mathbb{R})\big)$.
\end{theorem}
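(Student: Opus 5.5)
The plan is to follow the scheme of Theorems~\ref{THM2 - Local - Existence} and \ref{THM3 - Local - Existence}, now in dimension $n=1$. We work in the Banach space $\mathcal{E}_T:=L^{p(\cdot)}\big(0,T;L^{q(\cdot)}(\mathbb{R})\cap L^2(\mathbb{R})\big)$ with $T\in(0,1]$. The aim is a bound on $e_0$ that is uniform in $T$, and a bilinear bound $\|B(u,u)\|_{\mathcal{E}_T}\le C_2T^{\delta}\|u\|_{\mathcal{E}_T}^2$ with $\delta>0$. Theorem~\ref{Thm:BanachPicard} then gives existence and uniqueness once $T$ is chosen small.

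\textbf{The linear term $e_0$.} By \eqref{est-g_eta}, $|g_{\alpha,t}|\le C\eta_{t^{1/(2\alpha)},1+2\alpha}$. Since $q\in\mathcal{P}^{\log}(\mathbb{R})$, \eqref{eneq p norm} bounds the $L^{q(\cdot)}$ part uniformly in $t$. Young's inequality with $\|\eta_{\tau,m}\|_{L^1}=c(m)$ bounds the $L^2$ part uniformly in $t$. The Duhamel term in $f$ is handled with Minkowski's integral inequality in $x$. Finally $\|1\|_{L^{p(\cdot)}((0,1))}\le C$ by \eqref{1 in bound in P}. Together these give
\[
\|e_0\|_{\mathcal{E}_T}\le C_1\Big(\|u_0\|_{L^{q(\cdot)}\cap L^2}+\big\|\|f(s)\|_{L^{q(\cdot)}\cap L^2}\big\|_{L^1(0,1)}\Big),
\]
with $C_1$ independent of $T$.

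\textbf{The bilinear term.} By \eqref{eqB=F} and \eqref{eneq-F} with $n=1$,
\[
|K_{\alpha,t-s}|\le C(t-s)^{-\frac1{2\alpha}}\eta_{(t-s)^{1/(2\alpha)},2}.
\]
We then apply Lemma~\ref{Lemma of L2 cap Lp} with $\tau=(t-s)^{1/(2\alpha)}\le1$. This gives, in $L^{q(\cdot)}\cap L^2$, the factor $\max\big((t-s)^{-\vartheta/(2\alpha)},(t-s)^{-1/(4\alpha)}\big)\|u\|^2$, where $\vartheta=\varsigma$ on $(0,1]$. Minkowski's integral inequality then yields
\[
\|B(u,u)(t)\|_{L^{q(\cdot)}\cap L^2}\le C\int_0^t (t-s)^{-\gamma}\|u(s)\|^2_{L^{q(\cdot)}\cap L^2}\,ds,
\]
where
\[
\gamma=\max\Big(\frac{1+\vartheta}{2\alpha},\frac{1}{2\alpha}+\frac1{4\alpha}\Big).
\]

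\textbf{H\"older in time.} Let $\widetilde p$ be defined by $\frac1{\widetilde p}+\frac2{p}=1$, so that $\widetilde p^+=\frac{p^-}{p^--2}$; this requires $p^->2$. H\"older's inequality in $s$ on $(0,t)$ gives a factor $\|(t-s)^{-\gamma}\|_{L^{\widetilde p(\cdot)}(0,t)}$. Its modular is at most $\int_0^t(t-s)^{-\widetilde p^+\gamma}ds$, so the factor is bounded by $CT^{\delta}$ with $\delta=\frac1{\widetilde p^+}-\gamma$, provided $\widetilde p^+\gamma<1$, i.e.\ $\frac{2}{p^-}+\gamma<1$. Taking the $L^{p(\cdot)}(0,T)$ norm and using $\|1\|_{L^{p(\cdot)}(0,1)}\le C$ then gives the bilinear estimate.

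\textbf{Main obstacle.} The hardest step is checking that the hypothesis $\frac{2\alpha}{p^-}+\frac12\max\big(\frac12,\frac{1+\vartheta}{2\alpha}\big)<\alpha-\frac12$ actually implies $\frac2{p^-}+\gamma<1$. The bookkeeping mixes the normalisations $\frac{1+\vartheta}{2\alpha}$ and $\frac12$ with the $L^2$ decay exponent $\frac n2=\frac12$. The restriction $\alpha>\frac34$ is what makes the $L^2$ branch admissible, and I expect it also forces $p^->2$. My first attempt would be to multiply the target inequality by $\alpha$ and compare each branch of the maximum separately. If the stated condition turns out not to cover one branch directly, I would fall back on the bound $\vartheta\ge\frac12$ (from Remark~\ref{Remar - L^2 } when $q^-=2$) to absorb it.
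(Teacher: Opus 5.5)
Your plan is the paper's argument: the paper only says the proof is analogous to Theorems~\ref{THM2 - Local - Existence}--\ref{THM3 - Local - Existence}, with Lemma~\ref{Lemma of L2 cap Lp} inserted. Your exponent $\gamma=\frac{1}{2\alpha}\big(1+\max(\vartheta,\tfrac12)\big)$ and your target condition $\frac{2}{p^-}+\gamma<1$ are both right. What is missing is the one verification you flag, and the fallback you suggest would not close it. The lower bound $\vartheta\ge\frac12$ is true (it follows from \eqref{rho of Lp L2}), but it only tells you which branch of your maximum is active. What is needed is an \emph{upper} bound on $\vartheta$.

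Multiplying by $\alpha$, your target is equivalent to
\[
\frac{2\alpha}{p^-}+\frac12\max\Big(\vartheta,\frac12\Big)<\alpha-\frac12 .
\]
The $L^2$ branch coincides with the hypothesis. For the other branch, use \eqref{rho of Lp L2} with $\zeta=\frac{2}{q^-}$ on $(0,1]$: $\vartheta$ is either $1-\frac{1}{q^-}$ or a convex combination of $1-\frac{1}{q^-}$ and $\frac{2}{q^-}$. Since $q^-\ge 2$, this gives $\vartheta\le 1$. Hence $(2\alpha-1)\vartheta\le 1$, i.e.\ $\vartheta\le\frac{1+\vartheta}{2\alpha}$. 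So $\max(\vartheta,\frac12)\le\max(\frac12,\frac{1+\vartheta}{2\alpha})$, and the hypothesis implies your target.

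The stated hypothesis is in fact stronger than what the argument uses. Its second branch carries the normalisation of $\gamma$ rather than that of the condition, which is why the branches did not line up for you.

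Finally, since the maximum in the hypothesis is at least $\frac12$, the hypothesis gives $\frac{2}{p^-}<1-\frac{3}{4\alpha}$. This forces both $\alpha>\frac34$ and $p^->2$, so $\widetilde p$ is well defined and bounded, as you expected.
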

\begin{remark}\label{Rem 4.4}
In the previous theorem, the result where established in $\mathbb{R}$ rather than $\mathbb{R}^2$ or $\mathbb{R}^3$
 because if we have tried to prove analogous result  in $\mathbb{R}^2$ or $\mathbb{R}^3$ then we will arrive at the condition that
  $\frac{1}{2}\max(\frac{n}{2},\frac{1+n\vartheta}{2\alpha} )<\alpha-\frac{1}{2}$ ( $\vartheta$ is calculated from (\ref{rho of Lp L2}) for $t\in(0,1]$ )  which does not hold since  
   $\frac{1}{2}\max(\frac{n}{2},\frac{1+n\vartheta}{2\alpha}) \geq \frac{n}{4} \geq \frac{1}{2}$ if $n=2,3$ and 
 $\alpha-\frac{1}{2}\leq \frac{1}{2}$ if $\alpha\in (\frac{1}{2},1]$. Analogous results can be established in the space $L^{p(\cdot)}\big(0,T; L^{q(\cdot)}(\mathbb{R}) \cap L^2(\mathbb{R})\big)$ by leveraging the estimates from Lemma~\ref{Lemma of LV cap Lp}. In particular, for the space $L^{p(\cdot)}\big(0,T; L^{q(\cdot)}(\mathbb{R})\big)$, we examine the regime where $q^- = q_\infty = 2$.

\end{remark}

In this section, we have worked in a framework  where the exponent $p(t)$ is free from additional regularity constraints, such as the local $\log$-Hölder continuity condition. We observe that the requirements $\frac{2\alpha}{p^-} + \cdots < \alpha - \frac{1}{2}$ and $p^-> \frac{4\alpha}{2\alpha-1}$ can be relaxed to $\frac{\alpha}{p^-} + \cdots < \alpha - \frac{1}{2}$ and $p^-> \frac{2\alpha}{2\alpha-1}$ respectively by invoking the Riesz potential operator (see Definition~\ref{def:RieszPotential}). However, this gain in the integrability threshold comes at a cost, $p(t)$ must then satisfy the boundedness and local $\log$-Hölder continuity assumptions on $(0,1)$. We stress that, since the analysis is carried out in a variable exponent setting with minimal regularity requirements on $p(\cdot)$, the use of direct estimates provides a more flexible and robust approach for establishing the local existence result. In the sequel section, we apply the Riesz potential operator to establish global existence. The same approach can also be used to derive local existence results, under the additional structural assumptions on the exponent discussed above,  a more detailed discussion of this trade-off is provided in Remark~\ref{Rem 10}.

\bigskip

\subsection{Global Existence with Small initial Data}\label{sec-exist-glob-exist}
In the following, we consider two variable exponent $ p(t) \in \mathcal{P}(\,(0, +\infty)\,) $ and $q(\cdot)\in \mathcal{P}(\mathbb{R}^3)$, and the function space
\begin{equation*}
L^{p(\cdot)}\big((0,+\infty);L^{q(\cdot)}(\mathbb{R}^{3}) \cap L^\infty(\mathbb{R}^3)\big).
\end{equation*}
\begin{theorem}\label{THM glob 1}
Let $\alpha\in (\frac{1}{2},1]$, \(q \in \mathcal{P}^{\log}(\mathbb{R}^3)\) and \(p(\cdot) \in \mathcal{P}^{\log}([0,+\infty))\) with $p^-=p_\infty= \frac{2\alpha}{2\alpha-1}$ and $p^+<+\infty$. If 
$e_0\in L^{p(\cdot)}\big((0,+\infty);L^{q(\cdot)}(\mathbb{R}^{3})\cap L^\infty(\mathbb{R}^3)\big)$
 is small enough in norm then there is   a unique mild solution of the fractional Navier-Stokes equations \eqref{NS_Intro}
in the space $$L^{p(\cdot)}\big((0,+\infty);L^{q(\cdot)}(\mathbb{R}^{3}) \cap L^\infty(\mathbb{R}^3)\big).$$
\end{theorem}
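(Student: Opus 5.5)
The plan is to apply the Banach--Picard principle (Theorem~\ref{Thm:BanachPicard}) in the space $E:=L^{p(\cdot)}\big((0,+\infty);L^{q(\cdot)}(\mathbb{R}^{3})\cap L^\infty(\mathbb{R}^3)\big)$. Since $e_0\in E$ is assumed small, the whole proof reduces to a bilinear estimate $\|B(u,u)\|_E\le C_B\|u\|_E^2$ with $C_B$ independent of any time horizon. For $u\in E$, write $U(s):=\|u(s,\cdot)\|_{L^{q(\cdot)}\cap L^\infty}$.

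The first step repeats, without the restriction $t\le1$, the spatial part of the proof of Theorem~\ref{THM2 - Local - Existence}, which led to \eqref{FromLocToGlob}. That computation uses only the $t$-uniform bound \eqref{eneq p norm} for $q\in\mathcal{P}^{\log}(\mathbb{R}^3)$, the trivial $L^\infty$ bound $\|\eta_{\tau,4}\|_{L^1}=c$, and the kernel estimate \eqref{eneq-F}. All three hold for every $\tau>0$. Hence, for all $t>0$,
\[
\|B(u,u)(t)\|_{L^{q(\cdot)}\cap L^\infty}\le C\int_0^t \frac{U(s)^2}{(t-s)^{\frac{1}{2\alpha}}}\,ds\le C\,\mathcal{R}_{\beta}\big(\chi_{(0,\infty)}U^2\big)(t),\qquad \beta:=1-\frac{1}{2\alpha}\in(0,\tfrac12].
\]
Here $\mathcal{R}_\beta$ is the one-dimensional Riesz potential from Definition~\ref{def:RieszPotential}, with $n=1$ and $n-\beta=\frac1{2\alpha}$. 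This is where the Riesz potential replaces the H\"older/$T^\delta$ argument of the local theorems.

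The second step is the time estimate. Set $\tilde p:=p/2$. Since $p\in\mathcal{P}^{\log}$ with $p^+<\infty$, we have $\tilde p\in\mathcal{P}^{\log}$, after extending $p$ to $\mathbb{R}$, for instance evenly. I will apply Theorem~\ref{Thm:RieszBoundedness} with $n=1$ to $\mathcal{R}_\beta$ from $L^{\tilde p(\cdot)}$ to $L^{s(\cdot)}$, where $\frac{1}{s}=\frac{2}{p}-\beta$. This requires $\beta<1/\tilde p^+$, i.e.\ $1-\frac1{2\alpha}<\frac{2}{p^+}$, which holds because $\frac{2}{p^+}>0$ and $p^+<\infty$ should be chosen compatibly. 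I expect to have to check or add this constraint; it is one place where the hypotheses may be used implicitly. Then $\|U^2\|_{L^{p/2}}=\|U\|_{L^{p}}^2$ by the definition of the modular. This gives $\|B(u,u)\|_{L^{s(\cdot)}_tX}\le C\|u\|_E^2$, with $X:=L^{q(\cdot)}\cap L^\infty$.

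The main obstacle is returning from $s(\cdot)$ to $p(\cdot)$. We have $\frac1s-\frac1p=\frac1p-\beta=\frac1p-\frac{1}{p_\infty}$, since $\beta=1-\frac1{2\alpha}=\frac{2\alpha-1}{2\alpha}=\frac1{p_\infty}$. This vanishes at infinity, and because $p^-=p_\infty$ gives $p\ge p_\infty$, we get $s\le p$ with $s_\infty=p_\infty$. Both $s$ and $p$ are in $\mathcal{P}^{\log}$, since $1/s$ is an affine function of $1/p$. So Lemma~\ref{lem2}, in the form following it with $s\le p$ and equal limits at infinity, gives the embedding in the direction $L^{s(\cdot)}\hookrightarrow L^{p(\cdot)}$? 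Careful: Lemma~\ref{lem2} yields $L^{\max}\hookrightarrow L^{\min}$, which is the wrong direction. Here the exact identity $\frac1s=\frac2p-\frac1{p_\infty}$ is the essential point. The substitute I will try is to bound $B$ directly in $L^{p(\cdot)}$: split $\frac{2}{p}=\frac{1}{p}+\frac{1}{p}$ and write $U^2=U\cdot U$. One factor is put in $L^{p(\cdot)}$ and the other is used to lower the Riesz order, i.e.\ a Hedberg-type pointwise argument. Alternatively, apply Theorem~\ref{Thm:RieszBoundedness} with the constant pair $\frac{p_\infty}{2}\to p_\infty$ on the region where $p\approx p_\infty$, and use the log-decay on the rest. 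I expect this exponent-bookkeeping step to be the delicate part of the argument.

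Once $\|B(u,u)\|_E\le C_B\|u\|_E^2$ is established, choose $\|e_0\|_E<\frac{1}{4C_B}$. Theorem~\ref{Thm:BanachPicard} then gives a unique mild solution in $E$ with $\|u\|_E\le2\|e_0\|_E$.
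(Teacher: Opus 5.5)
Your reduction matches the paper's: Banach--Picard in $E$, the pointwise bound by $\mathcal{R}_\beta(U^2)$ from \eqref{FromLocToGlob}, and $\beta=1-\frac{1}{2\alpha}=\frac1{p_\infty}$. However, the decisive time estimate is never completed, and the route you chose has a real defect.

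The obstacle you describe comes from a sign error. Since $p\ge p^-=p_\infty$, you have $\frac1s-\frac1p=\frac1p-\frac1{p_\infty}\le 0$, so $s\ge p$ (not $s\le p$). Also $s_\infty=p_\infty$, so Lemma~\ref{lem2} gives exactly $L^{s(\cdot)}\hookrightarrow L^{p(\cdot)}$, and the Hedberg and localization detours are unnecessary.

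The genuine problem is the condition you waved through. Theorem~\ref{Thm:RieszBoundedness} on $L^{p(\cdot)/2}$ requires $\beta<\frac{2}{p^+}$, i.e.\ $p^+<2p_\infty=\frac{4\alpha}{2\alpha-1}$. This does not follow from $p^+<\infty$; ``$\frac2{p^+}>0$'' is no reason. If $p^+>2p_\infty$, then $\frac2p-\beta<0$ on a set of positive measure, so no target exponent $s$ exists. As written, your argument therefore covers only part of the theorem.

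The missing idea is to reverse the order: embed first, then apply the Riesz potential with its source exponent chosen so that it lands exactly in $L^{p(\cdot)}$. Define $r$ by $\frac1r:=\frac1p+\beta$.
\begin{itemize}
\item $r\ge 1$, because $\frac1r\le 2\beta\le 1$ for $\alpha\le1$.
\item The hypothesis of Theorem~\ref{Thm:RieszBoundedness} becomes $\beta<\frac1{r^+}=\frac1{p^+}+\beta$. This is precisely where $p^+<\infty$ is used, and it gives $\|\mathcal{R}_\beta(U^2)\|_{L^{p(\cdot)}}\le C\|U^2\|_{L^{r(\cdot)}}$.
\item $\frac1r-\frac2p=\frac1{p_\infty}-\frac1p\ge0$, so $r\le\frac p2$. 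Set $\frac1\tau:=\frac1{p_\infty}-\frac1p$. Lemma~\ref{lem2}, applied to $p$ and the constant $p_\infty$, gives $1\in L^{\tau(\cdot)}$.
\item Lemma~\ref{lem1} then yields $L^{p(\cdot)/2}\hookrightarrow L^{r(\cdot)}$.
\end{itemize}
Finally, $\|U^2\|_{L^{p(\cdot)/2}}=\|U\|_{L^{p(\cdot)}}^2$ gives $\|B(u,u)\|_E\le C_B\|u\|_E^2$, with no restriction on $p^+$ beyond finiteness.
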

\begin{proof}
Denote $\mathcal{E}_\infty^{p(\cdot),(q(\cdot),\infty)}:=L^{p(\cdot)}\big((0,+\infty);L^{q(\cdot)}(\mathbb{R}^{3})\cap L^\infty(\mathbb{R}^3)\big)$, we initiate the proof of global existence by utilizing the fundamental inequality (\ref{FromLocToGlob}), define 
\begin{equation*}
\Theta(t):=\left\|  \int_{0}^tK_{\alpha,t-s}\ast (u \otimes u)(s, x)\, ds\right\|_{L^{q(x)}(\mathbb{R}^3)\cap L^\infty(\mathbb{R}^3)} ,\, t\in (0,+\infty).
\end{equation*}
In view of the estimate established in (\ref{FromLocToGlob}), we obtain the following  estimate
\begin{equation*}
\Theta(t)\leq C \mathcal{R}_{\gamma}\left( \| u(s,\cdot) \|_{L^{q(\cdot)}(\mathbb{R}^3)\cap L^{\infty}(\mathbb{R}^3)}^2 \right)(t)
\end{equation*}
for every $t\in (0,+\infty)$, where $ \mathcal{R}_{\gamma}$ is the Riesz potential operator and  $\gamma :=1-\frac{1}{2\alpha}$. Define $ r(\cdot) \in \mathcal{P}(\,(0, +\infty)\,) $ by the relation
\begin{equation*}
\frac{1}{p(t)}=\frac{1}{r(t)}-\gamma \ ,\ t\in (0,+\infty).
\end{equation*}
It follows that $\gamma<\frac{1}{r^+}$ since $p^+$ is finite, by Theorem~\ref{Thm:RieszBoundedness} we have 
\begin{align*}
\|\Theta(t)\|_{L^{p(t)}((0,\infty))}&\leq 
\left\|\mathcal{R}_{\gamma}\left( \| u(s,\cdot) \|_{L^{q(\cdot)}(\mathbb{R}^3)\cap L^{\infty}(\mathbb{R}^n)}^2 \right)(t) \right\|_{L^{p(t)}((0,\infty))} \\
&\leq C \left\| \| u(t,\cdot) \|_{L^{q(\cdot)}(\mathbb{R}^3)\cap L^{\infty}(\mathbb{R}^n)}^2 \right\|_{L^{r(t)}((0,\infty))}.
\end{align*}
Since $p^-=p_\infty=\frac{1}{\gamma} $ then for every $t\in (0,+\infty)$, 
$$\frac{1}{r(t)}-\frac{1}{p(t)/2}= \gamma-\frac{1}{p(t)} \geq 0,$$
hence $r\leq \frac{p}{2}$, since \(p(\cdot) \in \mathcal{P}^{\log}((0,+\infty))\) and $p_\infty=\frac{1}{\gamma} $ 
then by Lemma~\ref{lem2},  \(1 \in L^{s(\cdot)}(\mathbb{R}^3)\) where 
\begin{equation*}
\frac{1}{s(t)} := \gamma-\frac{1}{p(t)}= \frac{1}{r(t)}-\frac{1}{p(t)/2}, \qquad \forall t\in (0,+\infty),
\end{equation*}
consequently  $L^{\frac{p(\cdot)}{2}}( [0,+\infty)) \hookrightarrow L^{r(\cdot)}( (0,+\infty))$. Therefore, by H\"{o}lder's inequality
\begin{align*}
\|\Theta(t)\|_{L^{p(t)}((0,\infty))}
&\leq C \left\| \| u(t,\cdot) \|_{L^{q(\cdot)}(\mathbb{R}^3)\cap L^{\infty}(\mathbb{R}^3)}^2 \right\|_{L^{r(t)}((0,\infty))} \\
&\leq C \left\| \| u(t,\cdot) \|_{L^{q(\cdot)}(\mathbb{R}^3)\cap L^{\infty}(\mathbb{R}^3)}^2 \right\|_{L^{\frac{p(t)}{2}}((0,\infty))}\\ 
&\leq C \left\| \| u(t,\cdot) \|_{L^{q(\cdot)}(\mathbb{R}^3)\cap L^{\infty}(\mathbb{R}^3)}\right\|_{L^{p(t)}((0,\infty))}\left\| \| u(t,\cdot) \|_{L^{q(\cdot)}(\mathbb{R}^3)\cap L^{\infty}(\mathbb{R}^3)} \right\|_{L^{p(t)}((0,\infty))}.
\end{align*}
Finally, we conclude that there exists a constant $C_B >0$ such that
\begin{align*}
\left\|B(\, u , \, u) \right\|_{\mathcal{E}_\infty^{p(\cdot),(q(\cdot),\infty)}}  =
\|\Theta(t)\|_{L^{p(t)}([0,\infty))} \leq C_B  \|\, u  \, \|_{\mathcal{E}_\infty^{p(\cdot),(q(\cdot),\infty)}} \ \|\, u  \, \|_{\mathcal{E}_\infty^{p(\cdot),(q(\cdot),\infty)}},
\end{align*} 
for every $u\in \mathcal{E}_\infty^{p(\cdot),(q(\cdot),\infty)}$. By choosing the initial data $e_0\in \mathcal{E}_\infty^{p(\cdot),(q(\cdot),\infty)}$ sufficiently small in the norm, a standard application of the Banach-Picard fixed-point theorem (Theorem~\ref{Thm:BanachPicard}) yields the existence of a unique global-in-time solution.

\end{proof}
\begin{remark}
Specifically, for the constant exponent $p = \frac{2\alpha}{2\alpha-1}$, Theorem~\ref{THM glob 1} provides a global-in-time existence result in the critical space $L^{\frac{2\alpha}{2\alpha-1}}\big((0,+\infty);L^{q(\cdot)}(\mathbb{R}^{3})\cap L^\infty(\mathbb{R}^3)\big)$, provided the initial data is sufficiently small in the norm.
\end{remark}
Similar arguments to those used for Theorem~\ref{THM glob 1} lead to the following theorem.
\begin{theorem}\label{THM glob 1-1}
Let $\alpha\in (\frac{1}{2},1]$, \(q \in (\frac{3}{2\alpha-1},+\infty]\) and \(p(\cdot) \in \mathcal{P}^{\log}((0,+\infty))\) with $p^-=p_\infty= \frac{2\alpha}{2\alpha-1-\frac{3}{q}}$ and $p^+<+\infty$. If 
$e_0\in L^{p(\cdot)}\big((0,+\infty);L^{q}(\mathbb{R}^{3})\big)$
 is small enough in norm  then there is   a unique mild solution of the fractional Navier-Stokes equations \eqref{NS_Intro}
in the space $$L^{p(\cdot)}\big((0,+\infty);L^{q}(\mathbb{R}^{3})\big).$$
\end{theorem}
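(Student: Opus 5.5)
We follow the proof of Theorem~\ref{THM glob 1}, replacing the spatial space $L^{q(\cdot)}\cap L^\infty$ by the constant-exponent space $L^q(\mathbb{R}^3)$. The extra spatial decay is supplied by the classical Young inequality rather than by the $L^\infty$ bound. Write $\mathcal{E}:=L^{p(\cdot)}((0,+\infty);L^q(\mathbb{R}^3))$. By Theorem~\ref{Thm:BanachPicard}, it suffices to prove the bilinear bound $\|B(u,u)\|_{\mathcal{E}}\le C_B\|u\|_{\mathcal{E}}^2$; smallness of $e_0$ then gives a unique global solution.

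\emph{Pointwise-in-time estimate.} Using \eqref{eqB=F} and the kernel bound \eqref{eneq-F}, Minkowski's inequality gives
\[
\Theta(t):=\Big\|\int_0^t K_{\alpha,t-s}*(u\otimes u)(s)\,ds\Big\|_{L^q}\le C\int_0^t (t-s)^{-\frac{1}{2\alpha}}\sum_{h,k}\big\|\eta_{(t-s)^{\frac{1}{2\alpha}},4}*|u_hu_k|(s)\big\|_{L^q}\,ds .
\]
We apply Young's inequality with $1+\frac1q=\frac1{q'}+\frac{2}{q}$, then H\"older's inequality:
\[
\|\eta_{\tau,4}*|u_hu_k|\|_{L^q}\le \|\eta_{\tau,4}\|_{L^{q'}}\|u\|_{L^q}^2\le C\tau^{-3/q}\|u\|_{L^q}^2 .
\]
The second step needs $4q'>3$, which always holds. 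For $q=\infty$ we use Young's inequality with $L^1$ and $L^\infty$ directly, and the factor is $\tau^0$. With $\tau=(t-s)^{1/(2\alpha)}$ this yields
\[
\Theta(t)\le C\int_0^t (t-s)^{-\frac{1+3/q}{2\alpha}}\|u(s)\|_{L^q}^2\,ds=C\,\mathcal{R}_\gamma\big(\|u(\cdot)\|_{L^q}^2\big)(t),
\]
where $\gamma:=1-\frac{1+3/q}{2\alpha}=\frac{2\alpha-1-3/q}{2\alpha}$, and $\|u(\cdot)\|_{L^q}^2$ is extended by zero for $s<0$. The hypothesis $q>\frac{3}{2\alpha-1}$ ensures $0<\gamma<1$, so this is a genuine one-dimensional Riesz potential. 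Moreover $p^-=p_\infty=1/\gamma$.

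\emph{Time integrability.} Define $r(\cdot)$ by $\frac1{p(t)}=\frac1{r(t)}-\gamma$. Since $p^+<\infty$, we have $\gamma<1/r^+$, and $r\in\mathcal{P}^{\log}$ because $1/r=1/p+\gamma$. Theorem~\ref{Thm:RieszBoundedness} on the line then gives
\[
\|\Theta\|_{L^{p(\cdot)}}\le C\,\big\|\|u\|_{L^q}^2\big\|_{L^{r(\cdot)}}.
\]
Next, $\frac1{r}-\frac2p=\gamma-\frac1p\ge0$ because $p\ge p^-=1/\gamma$. The exponents $r$ and $p/2$ are log-H\"older with the same limit at infinity, namely $\frac{1}{r_\infty}=\frac{2}{p_\infty}=2\gamma$. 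Hence Lemma~\ref{lem2} gives the embedding $L^{p(\cdot)/2}\hookrightarrow L^{r(\cdot)}$. Finally, H\"older's inequality gives $\|\,\|u\|_{L^q}^2\|_{L^{p(\cdot)/2}}\le C\|u\|_{\mathcal{E}}^2$, which closes the bilinear estimate.

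\emph{Main obstacle.} The delicate step is the time integrability, specifically the passage $L^{p/2}\hookrightarrow L^{r}$ on the unbounded interval $(0,\infty)$. This is exactly where the conditions $p^-=p_\infty=1/\gamma$ and $p\in\mathcal{P}^{\log}$ are used. One must also treat the endpoint $q=\infty$ separately (where $\gamma=1-\frac1{2\alpha}$, as in Theorem~\ref{THM glob 1}). Finally, one should check that the Riesz potential theorem, stated on $\mathbb{R}^n$, applies on the half-line after extending the exponents, for instance constantly equal to $p(0)$ on $(-\infty,0)$, which preserves log-H\"older continuity.
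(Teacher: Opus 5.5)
Your proposal is correct and is essentially the paper's argument. The paper only says that the proof of Theorem~\ref{THM glob 1} carries over, and the needed modification is exactly your Young-inequality step, which gives the factor $(t-s)^{-\frac{1+3/q}{2\alpha}}$ and hence $\gamma=1-\frac{1+3/q}{2\alpha}$ and $p^-=p_\infty=1/\gamma$. This is then followed by the Riesz potential bound and the embedding $L^{p(\cdot)/2}\hookrightarrow L^{r(\cdot)}$. One small fix to your closing remark: extending $p$ by the constant $p(0)$ on $(-\infty,0)$ breaks the log-H\"older decay condition at $-\infty$ unless $p(0)=p_\infty$, since that condition requires the single limit $p_\infty$ as $|t|\to\infty$; use the even reflection $p(-t):=p(t)$ instead, which preserves both the local and the decay condition with the same constants.
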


In the following theorems we will deal with the space $L^{p(\cdot)}\big(0,T;L^{q(\cdot)}(\mathbb{R}^{3})\big)$, the analysis is based on the estimates from Lemmas~\ref{LemmaCom 1}, \ref{LemmaCom 2}, \ref{Lemma of LV cap Lp}, \ref{Lemma of L2 cap Lp}, we denote the

In the estimate established in (\ref{FromLocToGlob}) the exponent  over $(t-s)$ which is $\frac{1}{2\alpha}$ is constant 
on the whole time  interval but in the space when we deal with the spaces  $L^{q(\cdot)}(\mathbb{R}^{3})$ or 
$L^{q(\cdot)}(\mathbb{R}^{3})\cap L^{\nu}(\mathbb{R}^{3})$  this exponent varies over time, by virtue of the estimate (\ref{rho-label}) from  Lemma~\ref{LemmaCom 1} the exponent is be defined by  $\vartheta_1$ on  $(0,1]$ and  $\vartheta_2$ otherwise,  where
\begin{align*}
\vartheta_1:=\Big(\frac{2}{q^-}-\frac{1}{q_\infty} \Big)
 \Big(\frac{2+q_\infty(1-\frac{2}{q^-})}{1+q_\infty(1-\frac{2}{q^-})}- \frac{1}{q_\infty-1}\Big) , \,
\vartheta_2:=\frac{1}{q_\infty}
\Big(\frac{q_\infty(1-\frac{2}{q^-})}{1+q_\infty(1-\frac{2}{q^-})}+ \frac{1}{q_\infty-1}\Big).
\end{align*}

\begin{theorem}\label{THM glob 2}
Let $\alpha\in (\frac{1}{2},1]$, \(q \in \mathcal{P}^{\log}(\mathbb{R}^3)\) with $\frac{3}{2\alpha-1}<q^-\leq q^+=q_\infty <+\infty$, 
 and $p \in \mathcal{P}^{\log}((0,+\infty))$ with $\frac{2\alpha}{2\alpha-1}<p^-\leq p^+<+\infty$. Suppose that   $\frac{\alpha}{p^-} +\frac{3\vartheta_1}{2}\leq \alpha -\frac{1}{2}$. If  $e_0\in L^{p(\cdot)}\big(0,T;L^{q(\cdot)}(\mathbb{R}^{3})\big)$
 is small enough in norm then there  a unique mild solution of the fractional Navier--Stokes equations  \eqref{NS_Intro}
in the space
 $$L^{p(\cdot)}\big(0,T;L^{q(\cdot)}(\mathbb{R}^{3})\big)$$
\end{theorem}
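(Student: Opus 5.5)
We will apply the Banach--Picard principle (Theorem~\ref{Thm:BanachPicard}) in $E:=L^{p(\cdot)}\big(0,T;L^{q(\cdot)}(\mathbb{R}^{3})\big)$. The goal is a bilinear bound $\|B(u,u)\|_E\le C_B\|u\|_E^2$ with $C_B$ independent of $T$; smallness of $e_0$ then gives a unique solution with $\|u\|_E\le 2\|e_0\|_E$.

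\textbf{Step 1: pointwise-in-time bound.} As in the proof of Theorem~\ref{THM1 - Local - Existence}, we use \eqref{eqB=F}, \eqref{eneq-F}, Minkowski's inequality, Lemma~\ref{LemmaCom 1} and H\"older's inequality. This gives
\[
\Theta(t):=\Big\|\int_0^t K_{\alpha,t-s}*(u\otimes u)(s)\,ds\Big\|_{L^{q(\cdot)}}\le C\int_0^t (t-s)^{-\frac{1+3\vartheta(t-s)}{2\alpha}}\,\|u(s)\|_{L^{q(\cdot)}}^2\,ds,
\]
where $\vartheta=\vartheta_1$ for $t-s\le 1$ and $\vartheta=\vartheta_2$ for $t-s>1$. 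The hypothesis $q^->\frac{3}{2\alpha-1}$ ensures $\frac{1+3\vartheta_2}{2\alpha}<1$, so the kernel is integrable. We then split the kernel into a near part $\chi_{(0,1]}(\tau)\tau^{-\gamma_1}$ and a far part $\chi_{(1,\infty)}(\tau)\tau^{-\gamma_2}$, with $\gamma_i=\frac{1+3\vartheta_i}{2\alpha}$. By Remark~\ref{Remark on mq}, $\vartheta_2\le\vartheta_1$, so $\gamma_2\le\gamma_1$.

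\textbf{Step 2: the near part.} We have $\tau^{-\gamma_1}\chi_{(0,1]}\le \tau^{-\gamma}$ for any $\gamma\ge\gamma_1$. Set $\gamma:=\gamma_1$ and $\beta:=1-\gamma_1$, so the near part is dominated by the one-dimensional Riesz potential $\mathcal{R}_\beta(\|u(s)\|^2_{L^{q(\cdot)}})$. The hypothesis $\frac{\alpha}{p^-}+\frac{3\vartheta_1}{2}\le\alpha-\frac12$ is exactly $\frac{2}{p^-}\le\beta$. Define $r$ by $\frac1{r(t)}=\frac1{p(t)}+\beta$. We need $\beta<1/r^+$, i.e.\ $\frac{1}{p^-}+\beta<1$; this holds if $\vartheta_1$ is not too small, and otherwise one first enlarges $\gamma$.

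Theorem~\ref{Thm:RieszBoundedness}, applicable since $p\in\mathcal{P}^{\log}$ and $p^+<\infty$, then bounds the $L^{p(\cdot)}$ norm by $\big\|\|u\|^2_{L^{q(\cdot)}}\big\|_{L^{r(\cdot)}}$. Since $\frac{1}{r}-\frac{2}{p}=\beta-\frac1p\ge \frac{2}{p^-}-\frac1p$, we have $r\le p/2$. We need the embedding $L^{p(\cdot)/2}\hookrightarrow L^{r(\cdot)}$ on $(0,T)$. For finite $T$ it follows from \eqref{1 in bound in P} and Lemma~\ref{lem1}, but with a constant depending on $T$; this is harmless since $T$ is fixed and only $e_0$ is required to be small. (On $(0,\infty)$ one would instead need $\frac{1}{s}=\beta-\frac1p$ with $1\in L^{s(\cdot)}$, as in Theorem~\ref{THM glob 1}.) H\"older's inequality finally gives $\|u\|_E^2$.

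\textbf{Step 3: the far part.} This part is nonzero only when $T>1$. There, $\tau^{-\gamma_2}\chi_{(1,\infty)}\le\tau^{-\gamma_1}$ because $\gamma_2\le\gamma_1$ and $\tau>1$. So the far part is dominated by the same Riesz potential as the near part, and the argument of Step 2 applies. Alternatively, the far kernel is bounded by $1$ on $(0,T)$, giving a trivial bound with constant $\le T\|1\|_{L^{p(\cdot)}(0,T)}\|1\|_{L^{(p/2)'}(0,T)}$.

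Combining Steps 2 and 3 yields $\|B(u,u)\|_E\le C_B\|u\|_E^2$, and Theorem~\ref{Thm:BanachPicard} concludes.

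The main obstacle is Step 2: checking that the exponent arithmetic admits a Riesz exponent $\beta$ satisfying simultaneously $\beta\ge 2/p^-$, $\beta\le 1-\gamma_1$ and $\beta<1/r^+$, and controlling the embedding constant for $L^{p/2}\hookrightarrow L^{r}$. If the strict inequality $\beta<1/r^+$ fails, we would first try choosing $\beta$ strictly between $2/p^-$ and $1-\gamma_1$, which is possible whenever the hypothesis inequality is strict. In the equality case we would try using $p^+<\infty$ together with the log-H\"older decay to absorb the borderline.
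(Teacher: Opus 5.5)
Your route is the paper's. You split at $t-s=1$ and dominate the near part by $\mathcal{R}_{\beta}$ with $\beta=1-\frac{1+3\vartheta_1}{2\alpha}$ (the paper's $\gamma_1$). You then apply Theorem~\ref{Thm:RieszBoundedness} with $\frac1p=\frac1r-\beta$ and use $L^{p(\cdot)/2}(0,T)\hookrightarrow L^{r(\cdot)}(0,T)$ with a $T$-dependent constant, exactly as the paper does. The far kernel is bounded by $1$ and handled by H\"older, and Banach--Picard concludes.

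However, the first argument in your Step 3 is false and must be discarded. Since $\gamma_2\le\gamma_1$, for $\tau>1$ one has $\tau^{-\gamma_2}\ge\tau^{-\gamma_1}$, not $\le$. So the far part is \emph{not} dominated by the Riesz kernel unless $\vartheta_1=\vartheta_2$, i.e.\ $q$ is constant. This is precisely the obstruction described in Remark~\ref{finalRamrk}, and it is why the result is only stated on a bounded interval. On $(1,T)$ you could at best write $\tau^{-\gamma_2}\le 1\le T^{\gamma_1}\tau^{-\gamma_1}$, which costs a $T$-dependent factor. Your ``alternative'' is valid: the far kernel is $\le 1$, followed by $\int_0^T\|u(s)\|^2_{L^{q(\cdot)}}\,ds\le C_T\|u\|_E^2$ via H\"older. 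That is exactly the paper's argument, and it is what closes the proof.

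Your exponent bookkeeping in Step 2 is also off, though the conclusions survive.
\begin{itemize}
\item The hypothesis $\frac{\alpha}{p^-}+\frac{3\vartheta_1}{2}\le\alpha-\frac12$ is equivalent to $\frac1{p^-}\le\beta$, not $\frac2{p^-}\le\beta$. The latter is the stronger local-existence condition with $\frac{2\alpha}{p^-}$. The weaker form still gives $r\le p/2$, since $\frac1r-\frac2p=\beta-\frac1{p(t)}\ge\frac1{p^-}-\frac1{p(t)}\ge0$.
\item The Riesz condition $\beta<1/r^+$ is not $\frac1{p^-}+\beta<1$. Since $\frac1{r^+}=\frac1{p^+}+\beta$, it is simply $p^+<\infty$, which is assumed.
\item The inequality $\frac1{p^-}+\beta<1$ is the separate requirement $r^->1$, and it holds automatically. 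Indeed $\frac1{p^-}<1-\frac1{2\alpha}$ and $\beta\le 1-\frac1{2\alpha}$, so $\frac1{p^-}+\beta<2-\frac1\alpha\le1$.
\end{itemize}
So there is no need to enlarge $\gamma$ or to treat an equality case separately. The ``main obstacle'' you flag does not arise.
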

\begin{proof}The proof is similar to that of Theorem \ref{THM glob 1}, let 
\begin{equation*}
\Theta(t):=\left\|  \int_{0}^tK_{\alpha,t-s}\ast (u \otimes u)(s, x)\, ds\right\|_{L^{q(x)}(\mathbb{R}^3)} ,\, t\in (0,T).
\end{equation*}
 Then, by similar arguments as in the proof of  Theorem \ref{THM1 - Local - Existence}  we obtain a similar
  estimation to (\ref{enequa THM1}) as follows, for every almost  every $t\in (0,T)$ we have
\begin{align*}
\Theta(t)\leq C \int_{0}^{t}\frac{\| u(s, x) \|_{L^{q(\cdot)}(\mathbb{R}^3)}^2}{(t-s)^{\frac{1+3\vartheta(t-s)}{2\alpha}}}  \,ds,
\end{align*}
where $\vartheta$ is from (\ref{rho-label}), denote the constants $\vartheta_1$ and $\vartheta_2$ by 
 $\vartheta_1:=\vartheta(t), t \in (0,1]$ and $ \vartheta_2:=\vartheta(t), t >1$ , Without loss of generality, we may assume that $T \geq 1$. For every $t\in (0,1]$ we have 
\begin{equation*}
\Theta(t)\leq \mathcal{R}_{\gamma_1}\left( \| u(s,\cdot) \|_{L^{q(\cdot)}(\mathbb{R}^n)}^2\chi_{(0,T)} \right)(t)
,\,  
\end{equation*}
where $\gamma_1:=1-\frac{1+3\vartheta_1}{2\alpha}$.  By the assumptions of the theorem, $0\leq 3 \vartheta_1 <2\alpha-1$, it follows that $0<\gamma_1 <1$. 
For every $t\in(0,T)$,
\begin{align}
\int_{0}^{t}\frac{ \| u(s, x) \|_{L^{q(\cdot)}(\mathbb{R}^3)}^2}{(t-s)^{\frac{1+3\vartheta(t-s)}{2\alpha}}} \,ds 
&= \int_{0}^{t-1}\frac{ \| u(s, x) \|_{L^{q(\cdot)}(\mathbb{R}^3)}^2}{(t-s)^{\frac{1+3\vartheta_2}{2\alpha}}} \,ds +
\int_{t-1}^{t}\frac{ \| u(s, x) \|_{L^{q(\cdot)}(\mathbb{R}^3)}^2}{(t-s)^{\frac{1+3\vartheta_1}{2\alpha}}} \,ds. \label{La-Est}\\
&\leq  \int_{0}^{T} \| u(s, x) \|_{L^{q(\cdot)}(\mathbb{R}^3)}^2 \,ds +
\int_{t-1}^{t}\frac{ \| u(s, x) \|_{L^{q(\cdot)}(\mathbb{R}^3)}^2}{(t-s)^{\frac{1+3\vartheta_1}{2\alpha}}} \,ds.\nonumber
\end{align}
We conclude the following estimate, for almost  every $t\in(0,T)$,
\begin{equation*}
\Theta(t)\leq C  \mathcal{R}_{\gamma_1}\left( \| u(s,\cdot) \|_{L^{q(\cdot)}(\mathbb{R}^n)}^2 \chi_{(0,T)}\right)(t)+
\int_{0}^{T} \| u(s, x) \|_{L^{q(\cdot)}(\mathbb{R}^3)}^2 \,ds ,
\end{equation*}
to estimate the second term on the left side we apply H\"{o}lder's inequality which yields 
$$\int_{0}^{T} \| u(s, x) \|_{L^{q(\cdot)}(\mathbb{R}^3)}^2 \,ds \leq C 
\left\| \| u(t,\cdot) \|_{L^{q(\cdot)}(\mathbb{R}^3)}^2 \right\|_{L^{\frac{p(t)}{2}}((0,T))}
\leq C 
\left\| \| u(t,\cdot) \|_{L^{q(\cdot)}(\mathbb{R}^3)} \right\|_{L^{p(t)}((0,T))}^2,
$$
and with similar arguments as in the proof of Theorem~\ref{THM glob 1} we estimate the first term on the left  side as follows 
\begin{align*}
\left\|\mathcal{R}_{\gamma_1}\left( \| u(s,\cdot) \|_{L^{q(\cdot)}(\mathbb{R}^n)}^2\chi_{(0,T)} \right)(t) \right\|_{L^{p(t)}((0,\infty))} 
&\leq C \left\| \| u(t,\cdot) \|_{L^{q(\cdot)}(\mathbb{R}^3)}^2\chi_{(0,T)} \right\|_{L^{r(t)}((0,\infty))}\\
&= C \left\| \| u(t,\cdot) \|_{L^{q(\cdot)}(\mathbb{R}^3)}^2 \right\|_{L^{r(t)}((0,T))}
\end{align*}
where  $ r(\cdot) \in \mathcal{P}(\,(0,\infty)\,) $ is defined by the relation $\frac{1}{p(t)}=\frac{1}{r(t)}-\gamma_1 \ ,\ t\in (0,\infty).$ Since  $p^-\geq\frac{1}{\gamma_1}$, it follows that $r\leq \frac{p}{2}$, therefore $L^{\frac{p(\cdot)}{2}}( (0,T)) \hookrightarrow L^{r(\cdot)}( (0,T))$, we conclude that 
\begin{align*}
\|\Theta(t)\|_{L^{p(t)}((0,T))}&\leq 
\left\|\mathcal{R}_{\gamma_1}\left( \| u(s,\cdot) \|_{L^{q(\cdot)}(\mathbb{R}^3)}^2\chi_{(0,T)} \right)(t) \right\|_{L^{p(t)}((0,\infty))} + \left\| \| u(t,\cdot) \|_{L^{q(\cdot)}(\mathbb{R}^3)} \right\|_{L^{p(t)}((0,T))}^2 \\
&\leq C \left\| \| u(t,\cdot) \|_{L^{q(\cdot)}(\mathbb{R}^3)} \right\|_{L^{p(t)}((0,T))}^2.
\end{align*}
Finally we conclude that there exists a constant $C_B >0$ such that, for every $u\in L^{p(\cdot)}\big(0,T;L^{q(\cdot)}(\mathbb{R}^{3})\big)$,
\begin{align*}
\left\|B(\, u , \, u) \right\|_{L^{p(\cdot)}(0,T;L^{q(\cdot)}(\mathbb{R}^{3}))}  &=
\|\Theta(t)\|_{L^{p(t)}((0,T))} \\
&\leq C_B  \|\, u  \, \|_{L^{p(\cdot)}(0,T;L^{q(\cdot)}(\mathbb{R}^{3}))} \ \|\, u  \, \|_{L^{p(\cdot)}(0,T;L^{q(\cdot)}(\mathbb{R}^{3}))}.
\end{align*} 
The existence and uniqueness of a global-in-time solution then follow from the Banach-Picard fixed-point theorem, provided the initial data is sufficiently small in the norm.

\end{proof}

\begin{remark}\label{Rem 10}\ 
\begin{enumerate}
\item While Theorem~\ref{THM glob 2} is proved here using the Riesz potential operator, the global existence result could alternatively be established through  direct calculations analogous to the proof of Theorem~\ref{THM1 - Local - Existence}.  In that setting, the global existence would follow for $p \in \mathcal{P}((0,T))$, provided the following condition $\frac{2\alpha}{p^-} +\frac{3\vartheta_1}{2}< \alpha -\frac{1}{2}$ is satisfied with $\frac{2\alpha}{p^-}$ instead of $\frac{\alpha}{p^-}$. Notably, this alternative approach offers the distinct advantage of lifting the boundedness requirement on $p$. This allows for a more general class of exponents $p(\cdot)$ that may grow indefinitely.

\item If $p(\cdot)$ is bounded and  $\frac{1}{p(\cdot)}$  is   locally $\log$-H\"{o}lder continuous on $(0,T)$ where $T >0$ then $\lim_{t\to T^-}\frac{1}{p(t)}:=\iota$ exists and is positive, therefore by setting $\widetilde{p}(t):=p(t)$ on $(0,T)$ and $\widetilde{p}(t):=\iota^{-1}$ on $[T,+\infty)$  then  the exponent $\widetilde{p}(\cdot)$ is an extension of $p(\cdot)$ which is \textit{globally} $\log$-H\"{o}lder continuous on $(0,+\infty)$, that is $\widetilde{p}(t) \in \mathcal{P}^{\log}((0,+\infty))$. Based on this observation, Theorem~\ref{THM glob 2} remains valid provided that $p(\cdot)$ is bounded and $\frac{1}{p(\cdot)}$ is   locally $\log$-H\"{o}lder continuous on $(0,T)$. A more general  extension property for the exponent function is stated in \cite[Lemma 2.4]{CF} and \cite[Proposition 4.1.7]{DHHR}. 

\item The Riesz potential operator approach also provides an effective framework for proving the local existence of mild solutions. In this setting, the requirements deviate slightly from those in Theorem~\ref{THM1 - Local - Existence}, specifically, we assume that $p(\cdot)$ is \textit{bounded} and $\frac{1}{p(\cdot)}$ is locally log-Hölder continuous on $(0,1)$. Under these assumptions, the condition 
$\frac{\alpha}{p^-}+\frac{3\vartheta _1}{2} < \alpha-\frac{1}{2}$ is sufficient to obtain the same local existence result. 
Thus, the Riesz potential method yields an improved integrability threshold compared to the direct approach used in Section~\ref{Sec-Exis-local}, where a stronger restriction involving $2\alpha$ appears, but without requiring boundedness or $\log$-Hölder continuity of the exponent $p(\cdot)$. Similar alternative versions of the results in Section~\ref{Sec-Exis-local} can be derived within the Riesz potential framework under these stronger structural assumptions on $p(\cdot)$.
\item Theorem~\ref{THM1 - Local - Existence} was established using the Riesz potential operator approach in the paper  \cite[Theorem 1]{GVH2}. However, that result was situated in the mixed-norm space $L^{p(\cdot)}\big(0,T; L^{q}(\mathbb{R}^{3})\big)$ with a constant  exponent $q$, this choice was necessitated by the current lack of general convolution-type estimates in fully variable exponent spaces.
\end{enumerate}
\end{remark}

\begin{remark}\label{finalRamrk}
Following the methodology of Theorem~\ref{THM glob 2}, we establish corresponding global existence results for sufficiently small initial data within the space $L^{p(\cdot)}\big(0,T;L^{q(\cdot)}(\mathbb{R}^{3})\big)$ with $q_\infty=+\infty$ or $q^-=q_\infty$, or the space 
$L^{p(\cdot)}\big(0,T;L^{q(\cdot)}(\mathbb{R}^{3})\cap L^{\nu}(\mathbb{R}^{3})\big)$ 
where we apply the estimates from Lemmas~\ref{LemmaCom 2}, \ref{Lemma of LV cap Lp}, \ref{Lemma of L2 cap Lp}.

In Theorem~\ref{THM glob 1} the global Existence was proved by virtue of the embedding $L^{\frac{p(\cdot)}{2}}( (0,+\infty)) \hookrightarrow L^{r(\cdot)}( (0,+\infty))$ where  $\frac{1}{p(t)}=\frac{1}{r(t)}-\gamma$, this embedding holds if $p^-=p_\infty=\frac{1}{\gamma}$. Attempting to extend this framework to obtain a global-in-time solution in the  space
\begin{align*}
L^{p(\cdot)}  \big((0,+\infty);L^{q(\cdot)}(\mathbb{R}^{3})\big)
\end{align*}
presents significant technical challenges. Specifically,
the estimate (\ref{La-Est}) involve two distinct exponents, $\alpha_1=\frac{1+3\vartheta_1}{2\alpha}$ on $(0,1]$ and $ \alpha_2=\frac{1+3\vartheta_2}{2\alpha}$ on $(1,+\infty)$, so the global existence holds on $(0,+\infty)$ if we estimate both terms in 
$ L^{p(t)}((0,+\infty)) $, the second term may be bounded by Riez operator via the estimate $\int_{t-1}^{t}\frac{\cdots}{(t-s)^{\frac{1+3\vartheta_1}{2\alpha}}} \,ds \leq \mathcal{R}_{\gamma_1}(\cdots)(t)$ where $\gamma=1-\alpha_1$, now if $p^-=p_\infty=\frac{1}{\gamma_1}$ we can use achieve the desired estimate as in the proof of Theorem~\ref{THM glob 1}, the second term 
$\int_{0}^{t-1}\frac{ \cdots}{(t-s)^{\frac{1+3\vartheta_2}{2\alpha}}} \,ds$ may be bounded by $\mathcal{R}_{\gamma_2}$
 with $\gamma=1-\alpha_2$, but to apply the previous technique  we need $p^-=p_\infty=\frac{1}{\gamma_2}$ which implies  that $\gamma_1=\gamma_2$ which unfortunately doesn't hold since $\vartheta_2<\vartheta_1 $ unless the exponent $q$ is constant. We note a possible alternative estimate for the second term  as follows,
$$\int_{0}^{t-1}\frac{ \| u(s, x) \|_{L^{q(\cdot)}(\mathbb{R}^n)}^2}{(t-s)^{\frac{1+3\vartheta_2}{2\alpha}}} \,ds \leq C
\eta_{1,\frac{1+3\vartheta_2}{2\alpha}}*\left( \| u(s, x) \|_{L^{q(\cdot)}(\mathbb{R}^n)}^2\right)(t),
$$ 
however, because $\frac{1+3\vartheta_2}{2\alpha}<\frac{1+3\vartheta_1}{2\alpha}<1$ in general, this approach is not amenable to establishing additional global-in-time estimates in $(0,+\infty)$ using our current techniques. 
\end{remark}


\bigskip
\begin{thebibliography}{99}

\bibitem{AH}
Almeida, A., H\"{a}st\"{o}, P.: Besov spaces with variable smoothness and integrability. \textit{J. Funct. Anal.} \textbf{258}(5), 1628--1655 (2010)

\bibitem{Chamorro2025}
Chamorro, D., Vergara-Hermosilla, G.: Liouville-type theorems for stationary Navier--Stokes equations with Lebesgue spaces of variable exponent. \textit{Doc. Math.} (2025). \href{https://doi.org/10.4171/DM/1018}{doi:10.4171/DM/1018}

\bibitem{GVH2}
Chamorro, D., Vergara-Hermosilla, G.: Lebesgue spaces with variable exponent: some applications to the Navier--Stokes equations. \textit{Positivity} \textbf{28}, 24 (2024)

\bibitem{ChangJin2016}
Chang, T., Jin, B.J.: Initial and boundary values for $L^q_\alpha(L^p)$ solution of the Navier--Stokes equations in the half-space. \textit{J. Math. Anal. Appl.} \textbf{439}(1), 1--25 (2016)

\bibitem{CVZ}
Chen, H., Vergara-Hermosilla, G., Zhao, J.: Well-posedness of the 2D surface quasi-geostrophic equation in variable Lebesgue spaces. \textit{Rend. Circ. Mat. Palermo, II. Ser} \textbf{74}, 134 (2025)


\bibitem{CF}
Cruz-Uribe, D., Fiorenza, A.: \textit{Variable Lebesgue Spaces. Foundations and Harmonic Analysis}. Applied and Numerical Harmonic Analysis. Birkh\"{a}user/Springer, Heidelberg (2013)


\bibitem{DHHR}
Diening, L., Harjulehto, P., H\"{a}st\"{o}, P., R\r{u}\v{z}i\v{c}ka, M.: \textit{Lebesgue and Sobolev Spaces with Variable Exponents}. Lecture Notes in Mathematics, vol. 2017. Springer-Verlag, Berlin (2011)



\bibitem{DHR}
Diening, L., H\"{a}st\"{o}, P., Roudenko, S.: Function spaces of variable smoothness and integrability. \textit{J. Funct. Anal.} \textbf{256}(6), 1731--1768 (2009)

\bibitem{DieningHastoNekvinda2005}
Diening, L., H\"{a}st\"{o}, P., Nekvinda, A.: Open problems in variable exponent Lebesgue and Sobolev spaces. In: \textit{Proc. Int. Conf. Function Spaces, Differential Operators and Nonlinear Analysis (FSDONA04)}, pp. 38--58. Math. Inst. Acad. Sci. Czech Rep., Prague (2005)

\bibitem{DD}
Dong, H., Du, D.: The Navier--Stokes equations in the critical Lebesgue space. \textit{Commun. Math. Phys.} \textbf{292}, 811--827 (2009)

\bibitem{DudekSkrzypczak2017}
Dudek, S., Skrzypczak, I.: Liouville theorems for elliptic problems in variable exponent spaces. \textit{Commun. Pure Appl. Anal.} \textbf{16}(2), 513--532 (2017)

\bibitem{Jiang2015}
Jiang, X.: Navier--Stokes equations with variable viscosity in variable exponent spaces of Clifford-valued functions. \textit{Bound. Value Probl.} \textbf{2015}, 30 (2015)

\bibitem{KatoFujita1962}
Kato, T., Fujita, H.: On the nonstationary Navier--Stokes system. \textit{Rend. Sem. Mat. Univ. Padova} \textbf{32}, 243--260 (1962)

\bibitem{KozonoShimizu2018}
Kozono, H., Shimizu, S.: Strong solutions of the Navier--Stokes equations based on the maximal Lorentz regularity theorem in Besov spaces. \textit{J. Funct. Anal.} \textbf{274}(5), 1328--1378 (2018)

\bibitem{LemarieRieusset2002}
Lemari\'{e}-Rieusset, P.G.: \textit{Recent Developments in the Navier--Stokes Problem}. CRC Press, Boca Raton (2002)

\bibitem{Lemarie-Rieusset2024}
Lemari\'{e}-Rieusset, P.G.: \textit{The Navier--Stokes Problem in the 21st Century}, 2nd edn. Chapman and Hall/CRC, Boca Raton (2023)

\bibitem{LiZhangZhang2021}
Li, Y., Zhang, J., Zhang, T.: Asymptotic stability of Landau solutions to Navier--Stokes system under $L^p$-perturbations. \textit{J. Differ. Equ.} \textbf{299}, 114--144 (2021)

\bibitem{Miao2008}
Miao, C., Yuan, B., Zhang, B.: Well-posedness of the Cauchy problem for the fractional power dissipative equations. \textit{Nonlinear Anal.} \textbf{68}, 461--484 (2008)

\bibitem{MihailescuRadulescu2012}
Mih\v{a}ilescu, M., R\v{a}dulescu, V.: On a PDE involving the $\mathcal{A}_{p(\cdot)}$-Laplace operator. \textit{Nonlinear Anal.} \textbf{75}(12), 4616--4628 (2012)

\bibitem{Phan2019}
Phan, T.: Well-posedness for the Navier--Stokes equations in critical mixed-norm Lebesgue spaces. \textit{J. Evol. Equ.} \textbf{19}, 1131--1162 (2019)

\bibitem{QW2023}
Qian, C., Wang, L.: Asymptotic profiles and concentration-diffusion effects in fractional incompressible flows. \textit{Nonlinear Anal.} \textbf{228}, 113185 (2023)


\bibitem{Samak1996}
Samko, S.G.: Convolution and potential type operators in $L^{p(x)}(\mathbb{R}^n)$. \textit{Integr. Transform. Spec. Funct.} \textbf{7}(3--4), 261--284 (1998). \href{http://dx.doi.org/10.1080/10652469808819204}{doi:10.1080/10652469808819204}

\bibitem{Sun2025}
Sun, J., Mai, Y., Yang, M.: Well-posedness of generalized magnetohydrodynamic equations in variable Lebesgue spaces. \textit{Electron. J. Differ. Equ.} \textbf{2025}(111), 1--15 (2025). \href{https://doi.org/10.58997/ejde.2025.111}{doi:10.58997/ejde.2025.111}

\bibitem{GVH1}
Vergara-Hermosilla, G.: Remarks on variable Lebesgue spaces and fractional Navier--Stokes equations. \textit{ESAIM Proc. Surv.} \textbf{79}, 110--125 (2025). \href{https://doi.org/10.1051/proc/202579110}{doi:10.1051/proc/202579110}
\end{thebibliography}
\end{document}